\newtheorem{theorem}{Theorem}[section]
\newtheorem{corollary}[theorem]{Corollary}
\newtheorem{lemma}[theorem]{Lemma}
\newtheorem{definition-theorem}[theorem]{Definition-Theorem}
\newtheorem{definition-proposition}[theorem]{Definition-Proposition}
\newtheorem{proposition}[theorem]{Proposition}
\theoremstyle{definition}
\newtheorem{definition}[theorem]{Definition}
\newtheorem{remark}[theorem]{Remark}
\newtheorem{example}[theorem]{Example}
\newtheorem{question}[theorem]{Question}
\newtheorem{notation}[theorem]{Notation}
\numberwithin{equation}{section}
\newcommand{\add}{\mathsf{add}\hspace{.01in}}
\newcommand{\Fac}{\mathsf{Fac}\hspace{.01in}}
\newcommand{\inj}{\mathsf{inj}\hspace{.01in}}
\renewcommand{\mod}{\mathsf{mod}\hspace{.01in}}
\newcommand{\proj}{\mathsf{proj}\hspace{.01in}}
\newcommand{\Sub}{\mathsf{Sub}\hspace{.01in}}
\newcommand{\cok}{\operatorname{Cok}\nolimits}
\newcommand{\codim}[1]{{#1}\operatorname{-codim}}
\newcommand{\End}{\operatorname{End}\nolimits}
\newcommand{\Ext}{\operatorname{Ext}\nolimits}
\newcommand{\gl}{\operatorname{gldim}\nolimits}
\renewcommand{\dim}[1]{{#1}\operatorname{-dim}}
\newcommand{\dom}{\operatorname{domdim}\nolimits}
\newcommand{\gdom}[1]{{#1}\operatorname{-domdim}}
\newcommand{\Hom}{\operatorname{Hom}\nolimits}
\newcommand{\id}{\operatorname{id}\nolimits}
\newcommand{\im}{\operatorname{Im}\nolimits}
\renewcommand{\ker}{\operatorname{Ker}\nolimits}
\newcommand{\op}{\operatorname{op}\nolimits}
\newcommand{\pd}{\operatorname{pd}\nolimits}
\newcommand{\tr}{\operatorname{Tr}\nolimits}
\newcommand{\tilt}{\mbox{\rm tilt}\hspace{.01in}}
\newcommand{\xto}{\xrightarrow}
\newcommand{\kD}{\mathrm{D}}
\begin{document}
\title[Tilting modules and dominant dimension]{Tilting modules and dominant dimension with respect to injective modules}

\author{Takahide Adachi}\address{T.~Adachi: Faculty of Global and Science Studies, Yamaguchi University, 1677-1 Yoshida, Yamaguchi 753-8541, Japan}\email{tadachi@yamaguchi-u.ac.jp}\thanks{T.~Adachi is supported by JSPS KAKENHI Grant Number JP17J05537 and 20K14291.}
\author{Mayu Tsukamoto}\address{M.~Tsukamoto: Graduate school of Sciences and Technology for Innovation, Yamaguchi University, 1677-1 Yoshida, Yamaguchi 753-8512, Japan}\email{tsukamot@yamaguchi-u.ac.jp}\thanks{M.~Tsukamoto is supported by JSPS KAKENHI Grant Number JP19K14513.}
\subjclass[2010]{Primary 16G10, Secondly 16E65}
\keywords{tilting modules, dominant dimension}

\begin{abstract}
In this paper, we study a relationship between tilting modules with finite projective dimension and dominant dimension with respect to injective modules as a generalization of results of Crawley-Boevey--Sauter, Nguyen--Reiten--Todorov--Zhu and Pressland--Sauter. Moreover, we give characterizations of almost $n$-Auslander--Gorenstein algebras and almost $n$-Auslander algebras by the existence of tilting modules. As an application, we describe a sufficient condition for almost $1$-Auslander algebras to be strongly quasi-hereditary by comparing such tilting modules and characteristic tilting modules.
\end{abstract}
\maketitle

\section{Introduction}
Tilting theory gives a universal method to construct derived equivalences and is considered as one of the effective tools in the study of many areas of mathematics (for example, the representation theories of finite dimensional algebras, finite groups and algebraic groups, algebraic geometry, and algebraic topology).
In this theory, the notion of tilting modules plays a crucial role.
More precisely, the endomorphism algebras of tilting modules are derived equivalent to the original algebra.
Hence it is important to give a construction of tilting modules for a given algebra.

In \cite{CBS17}, Crawley-Boevey--Sauter give a new characterization of which artin algebras with global dimension at most two are Auslander algebras via the existence of certain tilting modules contained in $\Fac_{1}(I)\cap \Sub^{1}(I)$, where $I$ is a maximal projective-injective direct summand of $A$. 
Here for each $i\ge 1$ and an $A$-module $Q$, $\Sub^{i}(Q)$ is the full subcategory of $\mod A$ whose objects are those $X$ for which there is an exact sequence
\begin{align}
0 \to X \to Q^0 \to Q^1 \to \cdots \to Q^{i-1} \notag
\end{align}
such that $Q^{j} \in \add Q$ for each $0 \le j \le i-1$. Dually, we define $\Fac_{i}(Q)$ (see Definition \ref{def-subfac}).
As a refinement, Nguyen--Reiten--Todorov--Zhu show the following theorem.

\begin{theorem}[{\cite[Lemma 1.1]{CBS17}} and {\cite[Theorem 3.3.4]{NRTZ19}}]
Let $A$ be a non-self-injective artin algebra and $I$ a maximal projective-injective direct summand of $A$.
Then $\dom A \ge 2$ if and only if there exists a unique basic tilting module  such that its projective dimension is exactly one and it is contained in $\Fac_{1}(I)\cap \Sub^{1}(I)$.
\end{theorem}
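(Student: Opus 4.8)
The plan is to prove the two implications separately, the organising observation being that a tilting module $T$ with $\pd_{A}T\le 1$ lies in $\C$ exactly when the torsion class $\Fac T$ it generates coincides with $\Fac I$; this simultaneously pins $T$ down and links it to the homological invariants of $A$. Throughout one uses the classical correspondence between tilting modules of projective dimension $\le 1$ and torsion classes containing all injectives, Bongartz's criterion (a module $T$ with $\pd_{A}T\le 1$ and $\Ext^{1}_{A}(T,T)=0$ is tilting as soon as there is an exact sequence $0\to A\to T^{0}\to T^{1}\to 0$ with $T^{0},T^{1}\in\add T$), and the left--right symmetry of $\dom$. Recall also that $\dom A\ge 1$ iff the injective envelope $I^{0}(A)$ lies in $\add I$, and $\dom A\ge 2$ iff moreover $I^{1}(A)\in\add I$.

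\emph{Necessity.} Suppose $T$ is a basic tilting module with $\pd_{A}T=1$ and $T\in\C$. Fix an exact sequence $0\to A\to T^{0}\to T^{1}\to 0$ with $T^{0},T^{1}\in\add T$. Since $\Sub^{1}(I)$ is closed under direct summands and finite direct sums and $T\in\C\subseteq\Sub^{1}(I)$, each $T^{i}$ sits in an exact sequence $0\to T^{i}\to J^{0}_{i}\to J^{1}_{i}$ with $J^{j}_{i}\in\add I$. Splicing $A\hookrightarrow T^{0}\hookrightarrow J^{0}_{0}$ exhibits $A$ as a submodule of a projective--injective module, so $I^{0}(A)$ is a direct summand of $J^{0}_{0}$, hence projective--injective and $\dom A\ge 1$. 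For the second layer, choose the embedding so that $A\subseteq I^{0}(A)\subseteq J^{0}_{0}$ and put $N:=J^{0}_{0}/A$, which has $I^{0}(A)/A$ as a direct summand. The two sequences produce an exact sequence $0\to T^{0}/A\to N\to J^{0}_{0}/T^{0}\to 0$ whose outer terms embed into modules of $\add I$ (using $T^{0}/A\cong\im(T^{0}\to T^{1})\hookrightarrow T^{1}\hookrightarrow J^{0}_{1}$ and $J^{0}_{0}/T^{0}\hookrightarrow J^{1}_{0}$); as submodules of modules in $\add I$ form an extension-closed class, $N$, and therefore $I^{0}(A)/A$, embeds into a projective--injective module. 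Hence $I^{1}(A)$, the injective envelope of $I^{0}(A)/A$, is projective--injective, i.e.\ $\dom A\ge 2$.

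\emph{Sufficiency and uniqueness.} Assume $\dom A\ge 2$; to have a tilting module of projective dimension exactly one we may and do assume $A$ is not self-injective (otherwise every tilting module is projective). Put $T:=I\oplus W$ with $W:=\cok\bigl(A\hookrightarrow I^{0}(A)\bigr)$, noting $I^{0}(A)\in\add I$. The defining sequence $0\to A\to I^{0}(A)\to W\to 0$ gives $\pd_{A}T\le 1$ and is a coresolution of $A$ by $\add T$. The key point for self-orthogonality is that $A\hookrightarrow I^{0}(A)$ is a left $\add I$-approximation of $A$; applying $\Hom_{A}(-,I)$ and $\Hom_{A}(-,W)$ to the defining sequence and using that $I$ is projective and $I^{0}(A)$ injective, each relevant restriction map $\Hom_{A}(I^{0}(A),-)\to\Hom_{A}(A,-)$ is surjective (lift along $I^{0}(A)\twoheadrightarrow W$, then factor through the approximation), whence $\Ext^{1}_{A}(T,T)=0$. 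By Bongartz, $T$ is tilting, and $\pd_{A}T=1$ since $A$ is not self-injective. It lies in $\C$: the summands $I$ and $W=I^{0}(A)/A$ are quotients of modules in $\add I$ and satisfy $\Ext^{1}_{A}(-,I)=0$ (again by the approximation property), so the defining conditions of $\Fac_{1}(I)$ hold; and $W$ embeds in its injective envelope $I^{1}(A)$, which is projective--injective precisely because $\dom A\ge 2$, so $T\in\Sub^{1}(I)$. Uniqueness is then formal: any basic tilting module $T'$ with $\pd_{A}T'=1$ and $T'\in\C$ has $\Fac T'\subseteq\Fac I$ (from $T'\in\Fac_{1}(I)\subseteq\Fac I$) and $\Ext^{1}_{A}(T',I)=0$ (from $T'\in\Fac_{1}(I)$), the latter forcing $I\in\Fac T'$ and hence $\Fac I\subseteq\Fac T'$; thus $\Fac T'=\Fac I=\Fac T$, so $T'\cong T$.

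The step I expect to be the main obstacle is the verification $T\in\C$ in the sufficiency part together with its mirror in the necessity part: one has to convert the (a priori one-sided) hypothesis $\dom A\ge 2$ — equivalently, that $I^{0}(A)$ and $I^{1}(A)$ are projective--injective — into the precise membership conditions for $\Fac_{1}(I)$ and $\Sub^{1}(I)$, and conversely to extract two full layers of the injective copresentation of $A$ from a single $\add T$-coresolution. Both hinge on the extension-closure of $\Sub(\add I)$ and on the fact that injective envelopes of projective modules (and their cokernels) are well-behaved minimal left $\add I$-approximations.
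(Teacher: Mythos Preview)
Your sufficiency and uniqueness arguments are essentially correct and match the paper's construction (the paper treats this statement as the special case $n=1$, $d=1$ of its Theorem~3.1, building $\mathbb{T}^{1}=I\oplus\cok(A\hookrightarrow I^{0}(A))$ exactly as you do). Two minor remarks: in this paper $\Fac_{1}(I)$ is simply the class of quotients of modules in $\add I$, with no $\Ext$-condition, so your appeal to ``the defining conditions of $\Fac_{1}(I)$'' via $\Ext^{1}(-,I)=0$ is superfluous; and in the uniqueness step the vanishing $\Ext^{1}_{A}(T',I)=0$ is immediate from $I$ being injective rather than from $T'\in\Fac_{1}(I)$.

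The necessity argument, however, has a genuine gap. From $T\in\Sub^{1}(I)$ you obtain for each $T^{i}$ only an embedding $0\to T^{i}\to J^{0}_{i}$ with $J^{0}_{i}\in\add I$; the second term $J^{1}_{i}\in\add I$ that you write down is not provided, since that would require $T^{i}\in\Sub^{2}(I)$. Consequently the key step $J^{0}_{0}/T^{0}\hookrightarrow J^{1}_{0}$ is unjustified, and your extension-closure argument for $I^{0}(A)/A\in\Sub^{1}(I)$ breaks down. This is precisely where the hypothesis $T\in\Fac_{1}(I)$ must enter --- your argument as written uses only $\Sub^{1}(I)$. The paper's remedy (proof of Theorem~3.1(3)$\Rightarrow$(1)) is to take $A\to T^{0}$ to be a \emph{minimal} left $\add T$-approximation and then show $T^{0}\in\add I$: if $X$ were a non-injective indecomposable summand of $T^{0}$, then $X\in\Fac_{1}(I)$ yields a surjection $I'\twoheadrightarrow X$ with $I'\in\add I\subseteq\add T$, and projectivity of $A$ lets the component $A\to X$ factor through $I'$, contradicting minimality. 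Once $T^{0}\in\add I$, the sequence $0\to A\to T^{0}\to T^{1}\to 0$ together with $T^{1}\in\Sub^{1}(I)$ immediately gives $A\in\Sub^{2}(I)$, i.e.\ $\dom A\ge 2$.
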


In particular, since an artin algebra $A$ is an Auslander algebra if and only if it satisfies $\gl A \le 2 \le \dom A$, we obtain a new characterization of artin algebras to be Auslander algebras by the existence of certain tilting modules.
Furthermore, Pressland--Sauter \cite{PrSa} characterize minimal $n$-Auslander--Gorenstein algebras (that is, $\id A \le n+1 \le \dom A$) and $n$-Auslander algebras (that is, $\gl A \le n+1 \le \dom A$) by using tilting modules with finite projective dimension. For details of these algebras, see \cite{IS18} and \cite{I07} respectively.

In this paper, we give a generalization of their results. 
Our starting point of this study is to give a relative version of their theorem.
Namely, we study a relationship between tilting modules and relative Auslander algebras in the sense of Iyama.
A relative Auslander algebra is realized as the endomorphism algebra of an additive generator of a faithful torsion-free class of an artin algebra.
Since the finitely generated module category of a representation-finite artin algebra is a faithful torsion-free class with additive generator, relative Auslander algebras are one of the generalizations of Auslander algebras. 
Moreover, Iyama gives a homological interpretation by a generalization of dominant dimension.
For an injective $A$-module $I$, we write $\gdom{I}A \ge n+1$ if $A\in\Sub^{n+1}(I)$.
If $I$ is a maximal projective-injective direct summand of $A$, then $\gdom{I}A=\dom A$. 

\begin{theorem}[{\cite[Theorem 2.1]{I105}}]\label{I105-thm2.1}
Let $A$ be an artin algebra.
Let $I$ $($respectively, $J$$)$ be a basic injective module with the property that $\add I$ $($respectively, $\add J$$)$ consists of all right $($respectively, left$)$ injective $A$-modules with projective dimension at most one.
Then $A$ is a relative Auslander algebra if and only if it satisfies $\gl A \le 2 \le  \min\{ \gdom{I}A, \gdom{J}A^{\op}\}$.
\end{theorem}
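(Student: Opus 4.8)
The plan is to argue via the realisation of relative Auslander algebras recalled above: by definition $A$ is a relative Auslander algebra precisely when $A\cong\End_B(M)$ for some artin algebra $B$ and some $M\in\mod B$ that is an additive generator of a faithful torsion-free class $\F=\add M$; here ``torsion-free class'' means closed under submodules and extensions, and ``faithful'' means $B\in\F$, equivalently that $M$ is a generator of $\mod B$. I would prove the two implications of the equivalence separately, the bulk of the work being the ``only if'' part.

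For the implication ``relative Auslander $\Rightarrow$ the homological conditions'': since $\Hom_B(M,-)$ is left exact and restricts to an equivalence $\add M\xrightarrow{\sim}\proj A$, the kernel of a minimal projective presentation $P_1\to P_0\to Y\to 0$ of an arbitrary $Y\in\mod A$ equals $\Hom_B(M,\ker f)$ for the morphism $f\colon M_1\to M_0$ in $\add M$ corresponding to $P_1\to P_0$; as $\F$ is closed under submodules, $\ker f\in\add M$, so this kernel is projective and $\gl A\le 2$. For $\gdom{I}A\ge 2$: faithfulness, i.e.\ $B\in\add M$, makes $M$, viewed as an $A$-module through the evaluation action of $A=\End_B(M)$, a projective $A$-module, whence $\Hom_B(M,E)$ is an injective $A$-module for every injective $B$-module $E$; and since a projective cover of $\Hom_B(M,E)$ has the form $\Hom_B(M,g)$ for a surjection $g\colon M_E\to E$ with $M_E\in\add M$, its kernel $\Hom_B(M,\ker g)$ is again projective, so $\pd_A\Hom_B(M,E)\le 1$. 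Applying $\Hom_B(M,-)$ to the beginning $0\to M\to E^0\to E^1$ of the minimal injective coresolution of $M$ over $B$ yields an exact sequence $0\to A\to\Hom_B(M,E^0)\to\Hom_B(M,E^1)$ with the last two terms injective of projective dimension at most one, so $\gdom{I}A\ge 2$; the bound $\gdom{J}A^{\op}\ge 2$ follows by the dual argument applied to the right $B$-module $DM$, whose additive closure is a faithful torsion class in $\mod B^{\op}$ with $A^{\op}=\End_{B^{\op}}(DM)$.

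For the converse, assume $\gl A\le 2$ and $\min\{\gdom{I}A,\gdom{J}A^{\op}\}\ge 2$. The inequality $\gdom{I}A\ge 2$ gives an exact sequence $0\to A\to I^0\to I^1$ with $I^0,I^1$ injective of projective dimension at most one; from this data, via the standard Morita--Tachikawa-type machinery, one reconstructs an artin algebra $B$ and a $B$-module $M$ with $A\cong\End_B(M)$ and with $M$ automatically a generator of $\mod B$, so that $\add M$ is faithful. It remains to check that $\add M$ is a torsion-free class; broadly, closure under submodules should come from $\gl A\le 2$ and closure under extensions from $\gdom{J}A^{\op}\ge 2$.

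The hard part will be exactly this last step: making the dictionary between the three homological hypotheses and the defining properties of a faithful torsion-free class precise, and in particular explaining why both one-sided bounds $\gdom{I}A\ge 2$ and $\gdom{J}A^{\op}\ge 2$ are needed and not just a single one --- note that $\gl A\le 2$ alone does not force the torsion-free property, and that over a proper torsion-free class $\gdom{I}A$ and $\gdom{J}A^{\op}$ need not coincide, one of them reflecting closure of $\F$ under subobjects and the other closure under extensions. Getting this right forces careful bookkeeping of the projective dimensions of the injective terms (which must be at most one, rather than zero as in the classical situation $\dom A\ge 2$) and of the behaviour of the relevant Nakayama and Auslander--Reiten translates under the identification $A=\End_B(M)$.
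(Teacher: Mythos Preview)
This theorem is not proved in the present paper; it is quoted from Iyama \cite[Theorem 2.1]{I105} as motivation for the definitions of almost Auslander--Gorenstein and almost Auslander algebras, and no argument for it is supplied here. There is therefore no ``paper's own proof'' against which to compare your proposal.

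A few remarks on the proposal itself. The forward direction is essentially correct in outline: the bound $\gl A\le 2$ via closure of $\add M$ under subobjects is the standard argument, and your reasoning for $\gdom{I}A\ge 2$ is sound once one notes that $_AM$ is projective because $B\in\add M$ gives $_AM\cong\Hom_B(B,M)$ as a summand of $_AA^n$. Your passage to $\gdom{J}A^{\op}\ge 2$ is too brisk, however: dualising turns the faithful torsion-free class $\add M$ into a torsion class in $\mod B^{\op}$ with $DM$ a \emph{cogenerator}, not a generator, so the argument must be genuinely dualised (working with projective covers and closure under quotients) rather than repeated verbatim.

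The converse is where your proposal is really only a plan. You invoke ``standard Morita--Tachikawa-type machinery'' to manufacture $B$ and $M$ from the data $0\to A\to I^0\to I^1$, but the classical correspondence requires $I^0$ to be projective-injective, whereas here one only has $\pd I^0\le 1$; extending the correspondence to this relative setting is precisely the substance of Iyama's result. Likewise, you correctly anticipate that closure under submodules should come from $\gl A\le 2$ and closure under extensions from the opposite-side condition $\gdom{J}A^{\op}\ge 2$, but you do not carry out either verification. To turn this into a proof you would need to work through \cite{I105} (and the companion \cite{I205}) directly.
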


From the viewpoint of Theorem \ref{I105-thm2.1}, we introduce the notions of almost Auslander--Gorenstein algebras and almost Auslander algebras, which are also generalizations of minimal Auslander--Gorenstein algebras and Auslander algebras respectively.
Let $I$ be as in Theorem \ref{I105-thm2.1}.
We call an algebra $A$ an \emph{almost} \emph{$n$-Auslander--Gorenstein algebra} (respectively, \emph{almost $n$-Auslander algebra}) if it satisfies 
\begin{align}
\id A \le n+1 \le \gdom{I} A\ \textnormal{(respectively, $\gl A \le n+1 \le \gdom{I} A$)}. \notag
\end{align}
Note that if $\add I = \proj A \cap \inj A$, then these definitions coincide with those of minimal $n$-Auslander--Gorenstein algebras and $n$-Auslander algebras respectively.

To give a characterization of these algebras by tilting modules, we start with studying a connection between tilting modules with finite projective dimension and dominant dimension with respect to injective modules with projective dimension at most one. 
The following theorem is one of main results in this paper.

\begin{theorem}[Theorem \ref{main-thm1}]\label{intro-thm}
Fix an integer $n\ge 0$.
Let $A$ be an artin algebra and $I$ an injective module with projective dimension at most one. 
Then $\gdom{I} A \ge n+1$ if and only if there exists a unique basic tilting module such that its projective dimension is exactly $d$ and it is contained in $\Fac_{d}(I) \cap \Sub^{n+1-d}(I)$ for an integer $0\le d \le \min\{ \id A, n+1\}$.
\end{theorem}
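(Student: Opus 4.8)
My plan is to follow the now-classical strategy that traces back to Crawley-Boevey–Sauter and was systematised by Pressland–Sauter: pass between a (partial) tilting module and the torsion pair it generates, and read off the dominant-dimension condition from the injective coresolution terms. Throughout write $\mathcal{C}_i := \Fac_i(I) \cap \Sub^{n+1-i}(I)$, so that $\mathcal{C}_0 = \Sub^{n+1}(I)$ consists of modules admitting an exact sequence $0 \to M \to I_0 \to \cdots \to I_n$ with all $I_j \in \add I$, and $\mathcal{C}_i$ is the intersection of this with $\Fac_i(I)$, the modules admitting a projective-like resolution of length $i$ by sums of $I$ from the $\Fac$ side.

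For the forward direction, assume $\gdom{I}A \ge n+1$ and set $d := \min\{\id A, n+1\}$; note $d \le n+1$ is exactly the range allowed in the statement. The minimal injective coresolution $0 \to A \to I^0 \to \cdots \to I^{n} \to \cdots$ has $I^0, \dots, I^n \in \add I$; truncating at the $d$-th step and dualising, one obtains from the first $d$ terms a module $T$ together with an exact sequence exhibiting $\pd T = d$ and $T \in \mathcal{C}_d$. The heart of the matter is to check that this $T$ is genuinely tilting — i.e.\ that $\Ext^{>0}_A(T,T) = 0$ and that $A$ has a finite coresolution by $\add T$ — and I expect this to be the most delicate point: one must use that the terms of the injective coresolution lie in $\add I$ together with the defining $\Fac$/$\Sub$ conditions to kill the higher self-extensions, much as in \cite[Theorem 3.3.4]{NRTZ19} and in Pressland–Sauter. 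Uniqueness then follows from the standard fact that a basic tilting module is determined by its generated torsion class, once one argues that $\mathcal{C}_d$ (together with the prescribed projective dimension) pins down that class; here one shows any tilting module $T'$ with $\pd T' = d'$ and $T' \in \mathcal{C}_{d'}$ must in fact have $d' = d$ and $\add T' = \add T$, by comparing the two torsion pairs.

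For the converse, suppose such a tilting module $T$ with $\pd T = d$ and $T \in \mathcal{C}_d$ exists, with $0 \le d \le \min\{\id A, n+1\}$. Take the $\add T$-coresolution $0 \to A \to T^0 \to \cdots \to T^m \to 0$ of $A$ guaranteed by tiltingness. Since $T \in \mathcal{C}_d \subseteq \Sub^{n+1-d}(I)$, each $T^j$ sits inside an exact sequence with the first $n+1-d$ cosyzygy terms in $\add I$; splicing the coresolution of $A$ with these and using that a horseshoe-type argument keeps the relevant terms in $\add I$, one produces an injective coresolution of $A$ whose first $n+1$ terms lie in $\add I$, i.e.\ $\gdom{I}A \ge n+1$. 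The bookkeeping of which cosyzygies remain in $\add I$ after splicing — making sure the two lengths $d$ and $n+1-d$ add up correctly to $n+1$ — is the technical core of this direction; the indexing of the $\mathcal{C}_i$ is designed precisely so that it balances. Finally I would record that the case $n = 0$, $I$ a maximal projective-injective summand recovers the theorem of Crawley-Boevey–Sauter and Nguyen–Reiten–Todorov–Zhu quoted above, as a sanity check on the normalisation.
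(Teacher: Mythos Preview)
Your forward direction is broadly aligned with the paper's construction: the candidate tilting module is indeed the $d$-th cosyzygy of $A$ together with $I$ (no ``dualising'' is needed), and the paper verifies it is $d$-tilting via an iterated mutation argument (Proposition~\ref{prop-construct-tilting}), starting from the $1$-tilting module $A^1 \oplus I$. Uniqueness is handled not through torsion classes but by a direct observation (Lemma~\ref{lem-uniquness-tilting}): any $X \in \Fac_d(I)$ with $\pd X \le d$ satisfies $\Ext_A^{>0}(X,T) = 0$ by dimension-shifting along its $\Fac_d(I)$-resolution (the $I$-terms are injective), so $X \oplus T$ is tilting and $X \in \add T$.

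The real gap is in your converse. You invoke only $T \in \Sub^{n+1-d}(I)$ and propose to splice the $\add T$-coresolution $0 \to A \to T^0 \to \cdots \to T^d \to 0$ with $\Sub^{n+1-d}(I)$-coresolutions of each $T^j$ via a ``horseshoe-type argument''. This does not yield $A \in \Sub^{n+1}(I)$. Already for $d = n = 1$: from $0 \to A \to T^0 \to T^1 \to 0$ with $T^0, T^1 \in \Sub^1(I)$, embedding $A \hookrightarrow T^0 \hookrightarrow I'$ gives $A \in \Sub^1(I)$, but the next cosyzygy $I'/A$ sits in $0 \to T^1 \to I'/A \to I'/T^0 \to 0$, and you have no control over $I'/T^0$. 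A totalisation of the double complex fares no better: to fill degree $n$ you would need $T^0 \in \Sub^{n+1}(I)$, which you do not have. Crucially, your sketch never uses the hypothesis $T \in \Fac_d(I)$.

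The paper's argument for this implication is quite different and uses $\Fac_d(I)$ essentially. Taking the \emph{minimal} $\add T$-coresolution of $A$, one shows by induction that $T^i \in \add I$ for every $0 \le i \le d-1$. If some $T^i$ had a non-injective indecomposable summand $X$, then $X \in \Fac_d(I)$ provides a resolution $I_{d-1} \to \cdots \to I_0 \to X \to 0$, and dimension-shifting along it (the $I_j$ are injective) identifies $\Ext^1_A(A^i, \ker(I_0 \to X))$ with $\Ext^{i+1}_A(A^i,-)$; this vanishes because inductively $T^0,\ldots,T^{i-1} \in \add I$ and $\pd I \le 1$ force $\pd A^i \le i$. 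The vanishing makes the minimal left $\add T$-approximation $A^i \hookrightarrow T^i$ factor through $\add I$, contradicting minimality. Once $T^0,\ldots,T^{d-1} \in \add I$, a \emph{single} splice at $T^d$ using $T^d \in \Sub^{n+1-d}(I)$ gives the desired coresolution --- and now the lengths $d$ and $n+1-d$ genuinely add to $n+1$ because only one splice is needed.
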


As an application, we give characterizations of almost $n$-Auslander--Gorenstein algebras and almost $n$-Auslander algebras, which is a refinement of \cite[Lemma 1.3]{HU96} for Iwanaga--Gorenstein algebras. 

\begin{theorem}[Theorem \ref{thm-almost-AG} and Proposition \ref{prop-id1}]\label{intro-thm2}
Let $A$ be a non-self-injective artin algebra and $n\ge 1$ an integer.
Let $I$ be as in Theorem \ref{I105-thm2.1}.
Then the following statements are equivalent. 
\begin{itemize}
\item[(1)] $A$ is an almost $n$-Auslander--Gorenstein algebra. 
\item[(2)] For an integer $1\le d \le \min\{ \id A, n+1\}$, there exists a unique basic tilting module such that 
\begin{itemize}
\item[(a)] its projective dimension is exactly $d$,
\item[(b)] it is contained in $\Fac_{d}(I) \cap \Sub^{n+1-d}(I)$, and
\item[(c)] it is cotilting with injective dimension exactly $n+1-d$ if $\id A>1$ and $0$ if $\id A =1$.
\end{itemize}
\end{itemize}
If in addition we assume $\gl A <\infty$, then the following statement is also equivalent.
\begin{itemize}
\item[(3)] $A$ is an almost $n$-Auslander algebra. 
\end{itemize}
\end{theorem}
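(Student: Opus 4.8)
The strategy is to build everything on top of Theorem~\ref{intro-thm} (i.e., Theorem~\ref{main-thm1}), which already provides, under the hypothesis $\gdom{I}A \ge n+1$, a unique basic tilting module $T$ with $\pd T = d$ and $T \in \mathcal{C}_d$ for some $0 \le d \le \min\{\id A, n+1\}$. The extra input needed to upgrade that statement to Theorem~\ref{intro-thm2} is to pin down \emph{when} $d \ge 1$ (versus $d = 0$), to show the resulting tilting module is simultaneously a cotilting module, and to compute its injective dimension as $n+1-d$. So I would organise the proof around the implications $(1)\Rightarrow(2)$, $(2)\Rightarrow(1)$, and then $(3)\Leftrightarrow(1)$ under the finite global dimension hypothesis.

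\textbf{$(1)\Rightarrow(2)$.} Assume $A$ is almost $n$-Auslander--Gorenstein, i.e.\ $\id A \le n+1 \le \gdom{I}A$. Apply Theorem~\ref{intro-thm} with this $n$: we get a unique basic tilting module $T$ with $\pd T = d$, $T \in \mathcal{C}_d = \Fac_d(I) \cap \Sub^{n+1-d}(I)$, for some $0 \le d \le \min\{\id A, n+1\} = \id A \le n+1$. The first point is to rule out $d = 0$: if $d = 0$ then $T$ is a projective tilting module, hence $T \in \add A$ and $A$ is a direct summand of (a power of) $T$, forcing $\pd A \le 0$, i.e.\ $A$ semisimple; since $n \ge 1$ we may invoke the standing convention excluding this degenerate case (or note $\gdom{I}A \ge n+1 \ge 2$ already forces $A$ non-semisimple), so $d \ge 1$. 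This gives (a) and (b). For (c), I would argue by a duality/symmetry: the dual statement of Theorem~\ref{intro-thm} applied to $A^{\op}$ and the injective left module $J$ of projective dimension $\le 1$ yields, since $\id A = \pd {}_A(D A)$ relates the two sides, a cotilting module on the $A$-side; the key is that $\id A \le n+1$ forces the minimal injective coresolution of $A$ to have length $\le n+1$, and combined with $T \in \Sub^{n+1-d}(I)$ one shows by a dimension-shifting/long-exact-sequence argument that $T$ is cotilting with $\id T = n+1-d$. More precisely: $\Fac_d(I) \cap \Sub^{n+1-d}(I)$ is self-dual in spirit, and the condition $T \in \Sub^{n+1-d}(I)$ together with $\Ext^{>0}_A(I, T) = 0$ (which follows from $I \in \add$ of the first $n+1$ terms of the injective coresolution of $A$ and $\pd T = d$) gives an $\add I$-coresolution of $T$ of length $n+1-d$, whence $\id T \le n+1-d$; equality and the cotilting axioms follow because $T$ has $n$ (the number of non-isomorphic indecomposable summands equals that of $A$) indecomposable summands and $\Ext^{>0}(T,T) = 0$.

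\textbf{$(2)\Rightarrow(1)$ and $(3)\Leftrightarrow(1)$.} Given the tilting module $T$ of (2), Theorem~\ref{intro-thm} (its converse direction) immediately yields $\gdom{I}A \ge n+1$; it remains to see $\id A \le n+1$. Here one uses that $T$ is cotilting with $\id T = n+1-d$ and $\pd T = d$: a standard argument (e.g.\ as in \cite[Lemma~1.3]{HU96}) says that for a module that is both tilting and cotilting, $\id A \le \pd T + \id T = (n+1-d) + d = n+1$, by resolving $A$ through $\add T$ using the tilting property and then bounding injective dimensions through the cotilting property. Finally, for $(3)\Leftrightarrow(1)$ under $\gl A < \infty$: an almost $n$-Auslander algebra is by definition $\gl A \le n+1 \le \gdom{I}A$, and when $\gl A < \infty$ we have $\id A = \gl A$ (for Iwanaga--Gorenstein-type comparisons; more precisely $\id {}_A A = \id A_A = \gl A$ when the global dimension is finite), so conditions (1) and (3) coincide in that case.

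\textbf{Main obstacle.} The routine parts are the tilting/cotilting axiom bookkeeping and the dimension counts. The genuinely delicate step is establishing (c) — that the tilting module produced by Theorem~\ref{intro-thm} is automatically \emph{cotilting} with the precise injective dimension $n+1-d$. This requires exploiting the self-dual nature of the classes $\mathcal{C}_d$ and translating the hypothesis $\id A \le n+1$ into an $\add I$-coresolution of $T$ of the correct length; the uniqueness claim in (2) must also be reconciled with the uniqueness already given by Theorem~\ref{intro-thm}, checking that the cotilting/injective-dimension conditions do not over-determine the module (they should be consequences, not extra constraints, once $\id A \le n+1$ holds). I expect the cleanest route is to prove a dual version of Theorem~\ref{main-thm1} for cotilting modules over $A^{\op}$ and then match the two tilting modules via their common membership in $\mathcal{C}_d$, using that a basic tilting-and-cotilting module in a self-dual subcategory is unique.
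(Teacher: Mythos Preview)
Your overall architecture matches the paper's: reduce to Theorem~\ref{main-thm1} for the existence and uniqueness of the $d$-tilting module in $\mathcal{C}_d$, then separately handle the cotilting condition, and finally use $\id A=\gl A$ when the latter is finite for $(1)\Leftrightarrow(3)$. Your arguments for $(2)\Rightarrow(1)$ and for $(1)\Leftrightarrow(3)$ are essentially the paper's (the paper phrases $(2)\Rightarrow(1)$ via the explicit exact sequence \eqref{seq-important-exseq}, but your ``resolve $A$ by $\add T$, then bound $\id$'' is the same computation).

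The genuine gap is exactly where you flagged it: establishing (c) in $(1)\Rightarrow(2)$. Neither of your two proposed routes works. First, the duality approach via $A^{\op}$ fails because the almost $n$-Auslander--Gorenstein condition is \emph{not} left--right symmetric (see Remark~\ref{rem-I211}): from $\id A\le n+1\le\gdom{I}A$ you cannot deduce the analogous inequality for $A^{\op}$, so there is no dual tilting module to match against. Second, the claim that $T\in\Sub^{n+1-d}(I)$ yields $\id T\le n+1-d$ is false as stated: membership in $\Sub^{n+1-d}(I)$ only gives an exact sequence $0\to T\to Q^0\to\cdots\to Q^{n-d}$ with $Q^i\in\add I$, with no control over the final cokernel, so no bound on $\id T$ follows. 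Your fallback (self-orthogonal with $|T|=|A|$ and $\id T<\infty$ implies cotilting) would still require the missing bound on $\id T$ as input.

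The paper closes this gap not abstractly but by using the \emph{explicit} description of the tilting module furnished by the proof of Theorem~\ref{main-thm1}: the unique basic $d$-tilting module in $\mathcal{C}_d$ is $\mathbb{T}^d=$ (basic module of) $A^d\oplus I$, where $A^d$ is the $d$-th cosyzygy in the minimal injective coresolution of $A$ (Remark~\ref{rem-unique-tilting}). Since $\id A\le n+1$, continuing that coresolution gives
\[
0\to \mathbb{T}^d \to I^d\oplus I \to \cdots \to I^n\oplus I \to \kD A \to 0,
\]
because $I^{n+1}\oplus I=\kD A$ by Lemma~\ref{lem-ARS-5.5-3}(3). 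This single sequence (Lemma~\ref{cor-cotilting}) simultaneously gives $\id\mathbb{T}^d\le n+1-d$ \emph{and} exhibits an $\add\mathbb{T}^d$-resolution of $\kD A$ (all middle terms lie in $\add I\subset\add\mathbb{T}^d$), so (C1)--(C3) hold and $\mathbb{T}^d$ is $(n+1-d)$-cotilting. In short, the key input you are missing is the concrete form of $\mathbb{T}^d$, not a duality argument.
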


Note that we can recover Crawley-Boevey--Sauter's result and more generally Pressland--Sauter's result (see Corollary \ref{PrSa-Theorem1}).
Furthermore, we obtain a characterization of relative Auslander algebras in terms of such tilting modules by combining Theorem \ref{intro-thm2} and its dual statement.
As a new perspective, we explain the construction of the tilting modules in Theorems \ref{intro-thm} and \ref{intro-thm2} from the viewpoint of tilting mutation theory (see Proposition \ref{prop-construct-tilting}(1)).

Next we study a relationship between almost $1$-Auslander algebras and strongly quasi-hereditary algebras which are a special class of quasi-hereditary algebras. 
Quasi-hereditary algebras arise from the representation theories of complex Lie algebras and algebraic groups. 
One of the important properties of quasi-hereditary algebras is the existence of tilting modules, called characteristic tilting modules, by Ringel \cite{R91}.  
Recall that strongly quasi-hereditary algebras are defined as quasi-hereditary algebras whose standard modules have projective dimension at most one and costandard modules have injective dimension at most one.  
It is known that if an artin algebra is strongly quasi-hereditary, then its global dimension is at most two \cite[Proposition A.2]{R10}. 
However, the converse does not hold in general.
By focusing on the connection between the tilting modules in Theorem \ref{intro-thm2} and characteristic tilting modules, we give a sufficient condition for almost $1$-Auslander algebras to be strongly quasi-hereditary algebras. 

\begin{theorem}[Theorem \ref{thm-tilting-sqh} and Corollary \ref{cor-sqh-Aus}]
Let $A$ be an almost $1$-Auslander algebra.
Let $\mathbb{T}^{1}$ be the tilting module with projective dimension exactly one in Theorem \ref{intro-thm2} and $\mathbb{T}$ the characteristic tilting module of $A$. 
If $\mathbb{T}$ coincides with $\mathbb{T}^{1}$, then $A$ is a strongly quasi-hereditary algebra. 
Moreover, if $A$ is an Auslander algebra, then the converse also holds.
\end{theorem}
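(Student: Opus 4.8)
The plan is to prove the two implications separately, relying on the correspondence between tilting modules in Theorem~\ref{intro-thm2} and homological invariants of $A$. Throughout we work with an almost $1$-Auslander algebra $A$, so $\gl A \le 2 \le \gdom{I}A$, and we write $\mathbb{T}^{1}$ for the unique basic tilting module of projective dimension exactly one lying in $\mathcal{C}_{1}=\Fac_{1}(I)\cap\Sub^{0}(I)=\Fac_{1}(I)$, which exists by Theorem~\ref{intro-thm} (equivalently Theorem~\ref{intro-thm2}(2) with $d=1$, $n=1$). We also fix a characteristic tilting module $\mathbb{T}$ of a quasi-hereditary structure on $A$; recall $A$ is quasi-hereditary since $\gl A\le 2<\infty$ (by Dlab--Ringel every algebra of finite global dimension admits a quasi-hereditary structure, but in fact we only need that $A$ \emph{is} quasi-hereditary as an input to the statement comparing $\mathbb{T}$ and $\mathbb{T}^{1}$).

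First I would prove: if $\mathbb{T}=\mathbb{T}^{1}$, then $A$ is strongly quasi-hereditary. Recall that for a quasi-hereditary algebra the characteristic tilting module $\mathbb{T}=\bigoplus T(i)$ satisfies $\mathsf{add}\,\mathbb{T}=\F(\Delta)\cap\F(\nabla)$, where $\F(\Delta),\F(\nabla)$ are the categories of modules filtered by standard, respectively costandard, modules. The key homological input is Ringel's formula: $\pd_{A}\mathbb{T} = \dim_{A}\F(\nabla)$ controls injective dimensions of costandard modules, and dually $\id_{A}\mathbb{T}$ controls projective dimensions of standard modules. More precisely, by \cite[Appendix]{R10} (and the classical Ringel duality), $A$ is strongly quasi-hereditary if and only if $\pd_{A}\nabla(i)\le 1$ for all $i$ if and only if $\id_{A}\mathbb{T}\le 1$; dually for standards and $\pd_{A}\mathbb{T}$. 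So the task reduces to showing both $\pd_{A}\mathbb{T}\le 1$ and $\id_{A}\mathbb{T}\le 1$. Since $\mathbb{T}=\mathbb{T}^{1}$, the hypothesis gives $\pd_{A}\mathbb{T}=\pd_{A}\mathbb{T}^{1}=1$ directly. For the injective dimension, I would invoke Theorem~\ref{intro-thm2}(2)(c): the tilting module $\mathbb{T}^{1}$ is cotilting with injective dimension exactly $n+1-d = 1+1-1 = 1$. Hence $\id_{A}\mathbb{T}=\id_{A}\mathbb{T}^{1}=1$, and both standard and costandard modules have the required dimension bounds, so $A$ is strongly quasi-hereditary.

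For the converse, assume moreover that $A$ is an Auslander algebra (so $\add I=\proj A\cap\inj A$ is the usual projective-injective summand and $\dom A\ge 2$) and that $A$ is strongly quasi-hereditary; I must show $\mathbb{T}=\mathbb{T}^{1}$. The strategy is to show $\mathbb{T}$ satisfies the three defining properties (a)--(c) in Theorem~\ref{intro-thm2} with $d=1$, and then invoke the \emph{uniqueness} clause to conclude $\mathbb{T}=\mathbb{T}^{1}$. Property (a): strong quasi-heredity forces $\pd_{A}\mathbb{T}\le 1$; it is exactly one because $A$ is not semisimple (an Auslander algebra of a non-semisimple algebra has global dimension exactly two), so some $\Delta(i)$ is not projective, forcing $\id_{A}\mathbb{T}\ge 1$ and dually $\pd_{A}\mathbb{T}=1$. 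Property (c): strong quasi-heredity dually gives $\id_{A}\mathbb{T}\le 1$, and equality holds for the same reason; and a tilting module of projective dimension one over an algebra with $\id A\le 2$ that is also cotilting — this I would verify using that $\mathbb{T}$ is both a tilting and cotilting module for a quasi-hereditary algebra (characteristic tilting modules are always both when the opposite algebra is also quasi-hereditary, which holds here). Property (b), membership in $\mathcal{C}_{1}=\Fac_{1}(I)$, is the crucial point: I would show $\mathbb{T}\in\Fac_{1}(I)$, i.e.\ every indecomposable summand $T(i)$ admits a surjection from an object of $\mathsf{add}\,I$ whose kernel lies in $\add I$ (using $\pd\le 1$). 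For an Auslander algebra, $\add I=\proj A\cap\inj A$, and the characteristic tilting module's summands $T(i)$ — being $\nabla$-filtered with $\pd\le 1$ — have their minimal projective presentations with both terms projective-injective; here one uses that over the Auslander algebra $\mathrm{End}(M)$ of a representation-finite algebra $B$, the projective-injective modules correspond to $B$ itself, and the $\nabla$-filtered modules of projective dimension $\le 1$ correspond under the equivalence to $B$-modules, giving the required $I$-presentation. This compatibility step — matching the characteristic tilting module of the Auslander algebra with the tilting module $\mathbb{T}^{1}$ via the projective-injective summand — is the main obstacle, and I expect it to require a careful bookkeeping with the Auslander algebra equivalence $\mod B \simeq \{\text{modules over }\mathrm{End}(M)\text{ of pd}\le 1\}$ or, alternatively, a direct argument that any tilting module satisfying (a) and (c) automatically lies in $\mathcal{C}_{1}$ when $\add I=\proj A\cap\inj A$. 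Once (a)--(c) are verified, uniqueness in Theorem~\ref{intro-thm2} finishes the proof.
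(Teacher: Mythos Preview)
Your forward implication is correct and matches the paper: since the chosen quasi-hereditary structure on $A$ is already right-strongly (this is part of the setup, coming from $\gl A\le 2$ via \cite{T}), one only needs $\id\mathbb{T}\le 1$ to conclude left-strong quasi-heredity, and this follows because $\mathbb{T}^{1}$ is $1$-cotilting by Theorem~\ref{thm-almost-AG}. (A small slip: you wrote $\pd_A\nabla(i)\le 1$ where $\id_A\nabla(i)\le 1$ is the condition equivalent to $\id_A\mathbb{T}\le 1$.)

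For the converse there is a genuine gap. First a miscomputation: with $n=1$ and $d=1$ one has $\mathcal{C}_{1}=\Fac_{1}(I)\cap\Sub^{1}(I)$, not $\Fac_{1}(I)\cap\Sub^{0}(I)$; the uniqueness clause you invoke lives in this smaller category, so you must also place $\mathbb{T}$ in $\Sub^{1}(I)$. More substantively, your primary route to $\mathbb{T}\in\Fac_{1}(I)$ via the Auslander correspondence is left vague---you yourself call it the ``main obstacle'' and defer it to bookkeeping---and it is not how the paper proceeds. The paper takes precisely the ``direct argument'' you mention only as a throwaway alternative. It isolates an elementary lemma (Lemma~\ref{lem-pd-sub}): if $\kD A\in\Fac_{2}(P)$ for a projective $P$, then any $Y$ with $\id Y\le 1$ has projective cover in $\add P$, hence $Y\in\Fac_{1}(P)$; dually, $A\in\Sub^{2}(I)$ and $\pd X\le 1$ force $X\in\Sub^{1}(I)$. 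For an Auslander algebra $I$ is projective and dominant dimension is left-right symmetric, so $\dom A\ge 2$ gives $\kD A\in\Fac_{2}(I)$; strong quasi-heredity gives $\id\mathbb{T}\le 1$, whence $P(\mathbb{T})\in\add I$ and $\mathbb{T}\in\Fac_{1}(I)$ at once, while the dual half (using $\pd\mathbb{T}\le 1$) gives $\mathbb{T}\in\Sub^{1}(I)$. No Auslander correspondence is needed. Your overall strategy (verify membership in $\mathcal{C}_{1}$, then invoke uniqueness) is exactly right, but the missing ingredient is this lemma linking the two-sided dominant-dimension hypothesis to $\Fac_{1}(I)$ and $\Sub^{1}(I)$ for modules of small homological dimension; without it the converse remains unproved.
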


\subsection*{Notation}
Throughout this paper, $A$ is an artin algebra and $\kD$ is the Matlis dual functor.
For simplicity, we assume that $A$ is non-semisimple and basic.
We denote by $\gl A$ the global dimension of $A$ and $\dom A$ the dominant dimension of $A$.
We write $\mod A$ for the category of finitely generated right $A$-modules and $\proj A$ (respectively, $\inj A$) for the full subcategory of $\mod A$ consisting of projective (respectively, injective) $A$-modules.
For $M \in \mod A$, we denote by $\add M$ the full subcategory of $\mod A$ whose objects are direct summands of finite direct sums of $M$.
We denote by $\pd M$ (respectively, $\id M$) the projective (respectively, injective) dimension of $M$.

\section{Dominant dimension with respect to injective modules}
In this section, we recall the definition of dominant dimension with respect to injective modules (see \cite{CX16}, \cite{I105} and \cite{I205} for details) and  collect related results.

Let $A$ be an artin algebra.
We start this section by recalling the following notions.
A morphism $f: X \to Y$ is said to be \emph{left minimal} if each morphism $g:Y \to Y$ with $gf=f$ is an automorphism. 
Let $Q$ be an $A$-module. A morphism $f:X\to Y$ is called a \emph{left $\add Q$-approximation} of $X$ if $Y\in \add Q$ and $\Hom(f,Q)$ is an epimorphism.
Moreover, a left $\add Q$-approximation is said to be \emph{minimal} if it is left minimal.
Dually, we define a \emph{right minimal morphism}, a \emph{right $\add Q$-approximation} and a \emph{minimal right $\add Q$-approximation}. 

Throughout this paper, the following notation is convenient. 

\begin{definition}\label{def-subfac}
Fix an integer $n \ge 0$.
Let $A$ be an artin algebra and $Q$ an $A$-module. 
\begin{itemize}
\item[(1)] We define $\Sub^{n+1}(Q)$ to be the full subcategory of $\mod A$ whose objects are those $X$ for which there is an exact sequence
\begin{align}
0 \to X \xto{f^{0}} Q^0 \xto{f^{1}} Q^1 \to \cdots \xto{f^{n}} Q^{n} \notag
\end{align}
such that $Q^{i} \in \add Q$ for each $0 \le i \le n$.
Let $\Sub^{0}(Q):=\mod A$. 
Moreover, we write $\codim{Q}X= n$ if
$Q^{i}\neq 0$, the inclusion $\im f^{i}\to Q^{i}$ is left minimal for each $0\le i\le n$, and $f^{n}$ can be chosen to be an epimorphism.
\item[(2)] We define $\Fac_{n+1}(Q)$ to be the full subcategory of $\mod A$ whose objects are those $Y$ for which there is an exact sequence
\begin{align}
Q_{n} \xto{f_{n}} \cdots \to Q_{1} \xto{f_{1}} Q_{0} \xto{f_{0}} Y \to 0 \notag
\end{align}
such that $Q_{i} \in \add Q$ for each $0 \le i \le n$.
Let $\Fac_{0} (Q) := \mod A$. 
Moreover, we write $\dim{Q}X = n$ if $Q_{i} \neq 0$, the natural epimorphism $Q_{i} \to \im f_{i}$ is right minimal for each $0 \le i \le n$, and $f_{n}$ can be chosen to be a monomorphism.
\end{itemize}
\end{definition}

Note that if $m,n$ are integers with $m\ge n$, then $\Sub^{m}(Q) \subseteq \Sub^{n}(Q)$ holds.

We collect basic results for approximations.

\begin{lemma} \label{lem-approximation}
Let $Q$ be an $A$-module and let 
\begin{align}\label{ses-lem22}
0 \to X \xto{f} Z \xto{g} Y \to 0 
\end{align}
be a non-split exact sequence with $Z\in \add Q$.
Then the following statements hold. 
\begin{itemize}
\item[(1)] Assume that $f$ is a left $\add Q$-approximation and let $f': X' \to Z'$ be a minimal left $\add Q$-approximation of $X' \in \add X$. 
Then $f'$ is a monomorphism, $Z' \in \add Z$ and $\cok f' \in \add Y$.
\end{itemize}
In the following, we assume that $\Ext_{A}^{1}(Q, Q)=0$.
\begin{itemize}
\item[(2)] 
$f$ is a left $\add Q$-approximation of $X$ if and only if $\Ext_{A}^{1}(Y, Q)=0$. 
\item[(3)] The following conditions are equivalent.
\begin{itemize}
\item[(a)] $X$ is indecomposable, $f$ is a minimal left $\add Q$-approximation of $X$ and $\Ext_{A}^{1}(Q,X)$ vanishes.
\item[(b)] $Y$ is indecomposable, $g$ is a minimal right $\add Q$-approximation of $Y$ and $\Ext_{A}^{1}(Y,Q)$ vanishes.
\end{itemize}
\end{itemize}
\end{lemma}

For the convenience of the readers, we give a proof, although it is a well-known result.

\begin{proof}
(1) For simplicity, let $X' \in \add X$ be an indecomposable module.
Let $\mu: X'\to X$ and $\pi: X\to X'$ be morphisms satisfying $\pi\mu=\mathrm{id}_{X'}$.
Since $f'$ and $f$ are left $\add Q$-approximations, there exist $\mu': Z'\to Z$ and $\pi': Z \to Z'$ such that $\mu'f'=f\mu$ and $\pi'f=f'\pi$ respectively.
It follows from $\mu'f'=f\mu$ that $f'$ is a monomorphism.
By the universal property of cokernels, there exist $\mu'':\cok f' \to Y$ and $\pi'': Y\to \cok f'$.
Therefore we obtain the following commutative diagram:
\begin{align}
\xymatrix{
0\ar[r]&X'\ar[r]^{f'}\ar@{=}[d]&Z'\ar[r]\ar[d]^{\pi'\mu'}&\cok f'\ar[r]\ar[d]^{\pi''\mu''}&0\; \\
0\ar[r]&X'\ar[r]^{f'}&Z'\ar[r]&\cok f'\ar[r]&0.
}\notag
\end{align}
Since $f'$ is left minimal, $\pi'\mu'$ and $\pi''\mu''$ are isomorphisms. 
Hence we have the assertion.

(2) Applying $\Hom_{A}(-, Q)$ to \eqref{ses-lem22} gives an exact sequence 
\begin{align}
\Hom_{A}(Z, Q) \xrightarrow{\Hom(f,Q)} \Hom_{A}(X,Q)  \to \Ext_{A}^{1}(Y,Q) \to \Ext_{A}^{1}(Z,Q)=0. \notag
\end{align}
Then $\Hom(f,Q)$ is surjective if and only if $\Ext_{A}^{1}(Y,Q)=0$.
Hence the assertion follows from the definition of a left $\add Q$-approximation.

(3) We only prove (a)$\Rightarrow$(b); the proof of (b)$\Rightarrow$(a) is similar.
First, it follows from (2) and its dual statement that $\Ext_{A}^{1}(Y, Q)=0$ and $g$ is a right $\add Q$-approximation respectively. 

Next we prove that $g$ is right minimal.
Let $k \in \End_{A}(Z)$ be a morphism with $gk=g$. 
Since $g(\mathrm{id}_{Z}- k)=0$, there exists $l: Z \to X$ such that $fl=\mathrm{id}_{Z}-k$. 
On the other hand, we have $gkf=gf=0$. 
There exists $h \in \End_{A}(X)$ such that $fh=kf$.
Hence we obtain
\begin{align}
flf=(\mathrm{id}_{Z} -k)f=f-kf=f-fh=f(\mathrm{id}_{X}-h). \notag 
\end{align}
Since $f$ is a monomorphism, $lf=\mathrm{id}_{X}-h$.
Thus $\mathrm{id}_{X}-h$ is a non-isomorphism.
Since $\End_{A}(X)$ is local, $h$ is an isomorphism, and hence so is $k$.

Finally, we show that $Y$ is indecomposable. 
Let $\varphi_{1}, \varphi_{2} \in \End_{A}(Y)$ be non-isomorphisms. 
Since $g$ is a right $\add Q$-approximation, we have the following commutative diagram for each $i \in \{1, 2\}$. 
\begin{align}
\xymatrix{
0\ar[r]&X\ar[r]^{f}\ar[d]^{\varphi''_{i}}&Z\ar[r]^{g}\ar[d]^{\varphi'_{i}}&Y\ar[r]\ar[d]^{\varphi_{i}}&0\; \\
0\ar[r]&X\ar[r]^{f}&Z\ar[r]^{g}&Y\ar[r]&0.
}\notag
\end{align}
To show the claim, we have only to prove that $\varphi_{1}+\varphi_{2}$ is not an isomorphism.
Suppose to the contrary that $\varphi_{1} + \varphi_{2}$ is an isomorphism. 
Since $g$ is a right $\add Q$-approximation, there exists $\psi \in \End_{A}(Z)$ such that $(\varphi_{1}+\varphi_{2})^{-1}g=g\psi$.
Then
\begin{align}
g\psi(\varphi'_{1}+\varphi'_{2})=(\varphi_{1}+\varphi_{2})^{-1}g(\varphi'_{1}+\varphi'_{2})=(\varphi_{1}+\varphi_{2})^{-1}(\varphi_{1}+\varphi_{2})g=g. \notag
\end{align}
By minimality of $g$, the morphism $\psi(\varphi'_{1}+ \varphi'_{2})$ is an isomorphism. 
This implies that $\varphi'_{1}+\varphi'_{2}$ is an isomorphism, and hence so is $\varphi''_{1}+\varphi''_{2}$.
Since $\End_{A}(X)$ is local,  at least one of $\varphi''_{1}$ and $\varphi''_{2}$ is an isomorphism.
Without loss of generality, we may assume that $\varphi''_{1}$ is an isomorphism.
Since $f$ is a left $\add Q$-approximation, there exists $\psi_{1} \in \End_{A}(Z)$ such that $\psi_{1} f= f \varphi''^{-1}_{1}$. 
Thus we have
\begin{align}
\psi_{1} \varphi'_{1} f = \psi_{1}  f  \varphi''_{1} = f  \varphi''^{-1}_{1}  \varphi''_{1} = f. \notag
\end{align}
By minimality of $f$, the morphism $\psi_{1} \varphi'_{1}$ is an isomorphism. 
Thus $\varphi'_{1}$ is an isomorphism.
Hence  $\varphi_{1}$ is also an isomorphism, a contradiction. 
Therefore $Y$ is indecomposable.
\end{proof}

Let $Q$ be an $A$-module.
In the following, we study a property of an exact sequence associated with an object in $\Sub^{n+1}(Q)$. Let $M\in \Sub^{n+1}(Q)$, that is, there exists an exact sequence
\begin{align}\label{lex-sec2}
0\to M\xto{f^{0}} Q^{0}\xto{f^{1}} Q^{1}\to \cdots \xto{f^{n}}Q^{n}
\end{align}
such that $Q^{i}\in\add Q$ for each $0\le i\le n$. 
Let $M^{0}:=M$ and $M^{i}=\cok f^{i-1}$ for each $1\le i\le n+1$.

We give a sufficient condition for $\Sub^{n+1}(Q)$ to be closed under direct summands. 

\begin{lemma}\label{lem-lex}
Keep the notation above.
Assume that the inclusion $\iota^{i}:M^{i}\to Q^{i}$ is a left $\add Q$-approximation for each $0\le i\le n$.
For each $X\in \add M$, there exists an exact sequence 
\begin{align}
0 \to X \xto{f_{X}^{0}} Q_{X}^{0} \xto{f_{X}^{1}} Q_{X}^{1} \to \cdots \xto{f_{X}^{n}} Q_{X}^{n}\notag
\end{align}
such that $Q_{X}^{i}\in \add Q^{i}$ and $\cok f_{X}^{i} \in \add M^{i+1}$ for each $0\le i\le n$. 
In particular, $\Sub^{n+1}(Q)$ is closed under direct summands.
\end{lemma}
\begin{proof}
This follows from repeated use of Lemma \ref{lem-approximation}(1).
\end{proof}

By Lemmas \ref{lem-approximation}(2) and \ref{lem-lex}, we have the following proposition.

\begin{proposition}\label{prop-clds}
Let $Q$ be an $A$-module with $\Ext_{A}^{1}(Q,Q)=0$.
Keep the notation concerning the exact sequence \eqref{lex-sec2}.
Then the following statements hold.
\begin{itemize} 
\item[(1)] If $\id Q\le 1$, then $\Ext_{A}^{1}(M^{i+1},Q)=0$ for each $0\le i\le n$.
In particular, $\Sub^{n+1}(Q)$ is closed under direct summands. 
\item[(2)] If $\pd Q\le 1$, then $\Ext_{A}^{1}(Q,M^{i+1})=0$ for each $0\le i\le n$.
In particular, $\Fac_{n+1}(Q)$ is closed under direct summands.
\end{itemize}
\end{proposition}
\begin{proof}
We only show (1); the proof of (2) is similar.
Let $M \in \Sub^{n+1}(Q)$, that is, there exists an exact sequence \eqref{lex-sec2}.
Applying $\Hom_{A}(-, Q)$ to the exact sequence $0 \to M^{i+1} \xto{\iota^{i+1}} Q^{i+1} \to M^{i+2} \to 0$ gives an exact sequence
\begin{align}
\Ext_{A}^{1}(Q^{i+1}, Q) \to \Ext_{A}^{1}(M^{i+1}, Q) \to \Ext_{A}^{2}(M^{i+2}, Q). \notag
\end{align}
Since the left-hand side and right-hand side vanish by the assumption on $Q$, we have $\Ext_{A}^{1}(M^{i+1},Q)=0$.
Moreover, by Lemmas \ref{lem-approximation}(2) and \ref{lem-lex}, $\Sub^{n+1}(Q)$ is closed under direct summands.
\end{proof}

Throughout this paper, we frequently assume that $Q$ is an injective $A$-module with $\pd Q\le 1$. 
By Proposition \ref{prop-clds}, the subcategories $\Sub^{n+1}(Q)$ and $\Fac_{n+1}(Q)$ are closed under direct summands for each $n\ge 0$.
Therefore a direct summand $X$ of $M\in \Sub^{n+1}(Q)$ is also in $\Sub^{n+1}(Q)$. 
Under a certain condition, we determine an exact sequence associated with $X$ as follows.

\begin{proposition}\label{prop-mutation-resol}
Let $Q$ be an $A$-module with $\pd Q \le 1$ and $\Ext_{A}^{1}(Q,Q)=0$.
Keep the notation concerning the exact sequence \eqref{lex-sec2}.
Assume that $Q^{j}\neq 0$, the inclusion $\iota^{j}:M^{j}\to Q^{j}$ is a left $\add Q$-approximation for $0\le j\le n$, and $f^{n}$ is not an epimorphism.
Fix an integer $1\le d\le n$.
Let $X\in \add M^{d}$ be an $A$-module with $X \notin \add Q$.
Then for each $d\le i \le n$, there exists an exact sequence
\begin{align}
0\to X \xto{f_{X}^{d}} Q^{d}_{X} \xto{f^{d+1}_{X}} Q^{d+1}_{X} \to \cdots \xto{f^{i}_{X}} Q^{i}_{X}\to M_{X}^{i+1}\to 0 \notag
\end{align}
such that $0 \neq Q^{k}_{X}\in \add Q^{k}$ for $d\le k\le i$, $M_{X}^{i+1} \in \add M^{i+1}$ and $\add M^{i+1}_{X}\cap \add Q=\{ 0\}$.
Moreover, we have the following statements.
\begin{itemize}
\item[(1)] Let $X$ be an indecomposable module $($satisfying $X\notin\add Q$ if $d=1$$)$.
Then $M^{i+1}_{X}$ is indecomposable.
\item[(2)] Let $X,X'\in \add M^{d}$ be $A$-modules $($satisfying $\add X \cap \add Q= \add X' \cap \add Q=\{0\}$ if $d=1$$)$. 
Then $X\cong X'$ if and only if $M^{i+1}_{X} \cong M^{i+1}_{X'}$.
\item[(3)] For each $A$-module $Y\in \add M^{i+1}$, there exists a unique $A$-module $X\in \add M^{d}$ such that $Y\cong M^{i+1}_{X}$.
\end{itemize}
In particular, $\add M^{i+1} \cap \add Q=\{ 0\}$ for each $1\le i\le n$.
\end{proposition}
\begin{proof}
For simplicity, we assume that $X \in \add M^{d}$ is an indecomposable module which is not contained in $\add Q$. 

(i) By Lemma \ref{lem-approximation}(1), there exists an exact sequence 
\begin{align}
0\to X\to Q_{X}^{d}\to M_{X}^{d+1}\to 0 \notag
\end{align}
such that $0\neq Q_{X}^{d}\in \add Q^{d}$ and $M_{X}^{d+1}\in \add M^{d+1}$. 
Since $X\notin \add Q$, the exact sequence is non-split.
We show that $M_{X}^{d+1}$ is an indecomposable module which is not contained in $\add Q$.
By Proposition \ref{prop-clds}(2), $\Ext_{A}^{1}(Q,M^{d})=0$, and hence $\Ext_{A}^{1}(Q,X)=0$.
Due to Lemma \ref{lem-approximation}(3), $M_{X}^{d+1}$ is indecomposable and $\add M_{X}^{d+1}\cap \add Q=\{ 0\}$.

(ii) By repeated use of (i), there exists an exact sequence
\begin{align}
0\to X \xto{f_{X}^{d}} Q^{d}_{X} \xto{f^{d+1}_{X}} Q^{d+1}_{X} \to \cdots \xto{f^{i}_{X}} Q^{i}_{X}\to M^{i+1}_{X}\to 0 \notag
\end{align}
such that $0\neq Q^{k}_{X}\in \add Q^{k}$ and $M_{X}^{k+1}:=\cok f_{X}^{k} \in \add M^{k+1}$ for each $d\le k \le i$.
Moreover, we have $M_{X}^{k+1}$ is indecomposable and $\add M_{X}^{k+1}\cap \add Q=\{ 0\}$.

By (ii), $M_{X}^{i+1}$ is indecomposable, and hence (1) is clear.
Moreover, (2) follows from uniqueness of a minimal left/right $\add Q$-approximation.
Finally we show (3). 
By the construction of the exact sequence, we obtain $M^{i+1}=\bigoplus_{X}M_{X}^{i+1}$, where $X$ runs over all indecomposable direct summands of $M^{d}$ which are not contained in $\add Q$.
This finishes the proof.
\end{proof}

Now we introduce the following central notion of this paper.

\begin{definition}
Fix an integer $n\ge 0$. Let $I$ be an injective $A$-module and $X$ an $A$-module.
Then we write $\gdom{I}X\ge n$ if $X\in \Sub^{n}(I)$. 
In this case, we say that the \emph{ dominant dimension of $X$ with respect to $I$} is at least $n$.
\end{definition}

For $i \ge 0$, fix a basic right (respectively, left) injective $A$-module $I_{i}$ (respectively, $J_{i}$) with the property that $\add I_{i}$ (respectively, $\add J_{i}$) consists of all injective right (respectively, left) $A$-modules with projective dimension at most $i$.
If $i=0$, then we have $\gdom{I_{0}} X=\dom X$ for each $X \in \mod A$.
Moreover, the $(l, n)$-condition in \cite{I105} and \cite{I205} becomes, in our terminology, the condition that $\gdom{I_{l-1}} A\ge n$.

\begin{remark}\label{rem-I211}
The notion of dominant dimension with respect to injective modules is not always left-right symmetric. 
Namely, there exists an example of an artin algebra $A$ satisfying $\gdom{I_{1}}A \neq \gdom{J_{1}}A^{\op}$ (see, \cite[Remark 2.1.1(2)]{I105}). 
On the other hand, we have $\dom A = \dom A^{\op}$ by \cite[Theorem 4]{M68}.
\end{remark}

\section{Tilting modules and dominant dimension}
In this section, we study a relationship between tilting modules with finite projective dimension and dominant dimension with respect to injective modules.
We start this section with recalling the definition and basic properties of tilting modules.

\begin{definition}(\cite{BB80, HR82, M86})\label{def-tilting}
Fix an integer $d \geq 0$ and let $T,C$ be $A$-modules.
\begin{itemize}
\item[(1)] We call $T$ a \emph{tilting module} if it satisfies the following conditions:
\begin{itemize}
\item[(T1)] $\pd T <\infty$;
\item[(T2)] $\Ext_{A}^{i} (T,T)=0$ holds for all $i \geq 1$; 
\item[(T3)] $\codim{T}A<\infty$.
\end{itemize}
Moreover, a tilting module $T$ is called a \emph{$d$-tilting module} if $\pd T =d$. 
\item[(2)] We call $C$ a \emph{cotilting module} if it satisfies the following conditions:
\begin{itemize}
\item[(C1)] $\id C <\infty$;
\item[(C2)] $\Ext_{A}^{i} (C, C)=0$ holds for all $i \geq 1$; 
\item[(C3)] $\dim{C}\kD A<\infty$.
\end{itemize}
Moreover, a cotilting module $C$ is called a \emph{$d$-cotilting module} if $\id C =d$.
\end{itemize}
\end{definition}

Note that $T$ is a tilting $A$-module if and only if $\kD T$ is a cotilting $A^{\op}$-module.
We collect well-known results for tilting modules with finite projective dimension.
We denote by $\tilt A$ the set of isomorphism classes of basic tilting $A$-modules.
For $M,M' \in \mod A$, we write $M \succeq M'$ if $\Ext_{A}^{i}(M,M')=0$ for all $i \ge 1$.
We denote by $|M|$ the number of isomorphism classes of indecomposable direct summands of $M$.

\begin{proposition}\label{prop-basic-result-tilting}
The following statements hold.
\begin{itemize}
\item[(1)] $($\cite[Theorem 1.4]{M86} \textnormal{and} \cite[Lemma III.2.2]{H88}$)$ Let $T$ be an $A$-module with $\pd T< \infty$ and $\Ext_{A}^{i}(T, T)=0$ for all $i \ge 1$. 
Then $T$ is a tilting module if and only if $\codim{T}A = \pd T$.
\item[(2)] $($\cite[Corollary to Proposition 1.18]{M86}$)$ Let $T$ be a $d$-tilting module and let
\begin{align}
0\to A \to T^{0} \to T^{1} \to \cdots \to T^{d}\to 0 \notag
\end{align}
be an exact sequence in $\mathrm{(T3)}$.
Then $\add T =\add(T^{0}\oplus T^{1}\oplus \cdots \oplus T^{d})$.
\item[(3)] $($\cite[Theorem 1.19]{M86}$)$ If $T$ is a tilting $A$-module, then we have $|T|=|A|$.
\item[(4)] $($\cite{RS91,HU051}$)$ $\succeq$ gives a partial order on $\tilt A$. 
Moreover, if $T \succeq T'$ in $\tilt A$, then $\pd T \le \pd T'$ holds.
\end{itemize}
\end{proposition}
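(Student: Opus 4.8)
The plan is to treat the four assertions as standard facts resting on \cite{M86} and \cite{H88}, and to write out only the reductions between them; everything reduces to the dimension‑shifting Lemma \ref{lem-miy} together with the derived equivalence attached to a tilting module. For (1), the implication ``$\Leftarrow$'' is immediate, since $\codim{T}A=\pd T$ together with $\pd T<\infty$ from (T1) already gives (T3). For ``$\Rightarrow$'', I would put $d:=\pd T$, $m:=\codim{T}A<\infty$, fix an exact sequence $0\to A\to T^{0}\to\cdots\to T^{m}\to 0$ with $T^{i}\in\add T$, and prove $d=m$ in two steps. For $d\le m$: by (T2) the complex $\Hom_{A}(T,T^{\bullet})$ represents $\RHom_{A}(T,A)$ and is concentrated in degrees $0,\dots,m$, so combining the triangle $\Omega X\to P\to X$ attached to a projective cover $P\to X$ (with $P\in\add A$) of an arbitrary $X$ with $A\simeq T^{\bullet}$ in $\Db(\mod A)$ yields $\Ext_{A}^{m+1}(T,X)\cong\Ext_{A}^{m+1+r}(T,\Omega^{r}X)$ for all $r\ge0$, which vanishes once $m+1+r>d$; hence $\pd T\le m$. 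For $m\le d$: if $m>d$, truncate the coresolution at level $d$; Lemma \ref{lem-miy}(2) (its ``in particular'' clause with $d'=d$, using $\pd T\le d$) gives $\Ext_{A}^{\ge1}(T,X^{d})=0$ for the $d$‑th cosyzygy $X^{d}$, and feeding this into the tail $0\to X^{d}\to T^{d}\to X^{d+1}\to0$ via Lemma \ref{lem-miy} forces $\Ext_{A}^{1}(X^{d+1},X^{d})=0$, so the sequence splits, $X^{d}\in\add T$, and $0\to A\to T^{0}\to\cdots\to T^{d-1}\to X^{d}\to0$ is a coresolution of length $d$, contradicting minimality of $m$.

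For (3) I would invoke that a tilting module $T$ of finite projective dimension induces a triangle equivalence $\RHom_{A}(T,-)\colon\Db(\mod A)\xrightarrow{\ \sim\ }\Db(\mod B)$ with $B=\End_{A}(T)$ (this is \cite[Theorem 1.19]{M86}); since a triangle equivalence identifies Grothendieck groups, $|A|=\operatorname{rk}K_{0}(\Db(\mod A))=\operatorname{rk}K_{0}(\Db(\mod B))=|B|=|T|$. For (2), set $T':=T^{0}\oplus\cdots\oplus T^{d}$. Then $\add T'\subseteq\add T$ is obvious, and $T'$ satisfies (T1) ($\pd T'\le d$), (T2) (inherited from $\add T'\subseteq\add T$ and (T2) for $T$) and (T3) ($\codim{T'}A\le d$, via the given sequence), hence is tilting; so $|T'|=|A|=|T|$ by (3), and since $\add T'\subseteq\add T$ with both having $|A|$ pairwise non‑isomorphic indecomposable summands, $\add T'=\add T$.

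For (4), reflexivity of $\succeq$ is (T2). For antisymmetry, if $T\succeq T'\succeq T$ in $\tilt A$ then $T\oplus T'$ satisfies (T1)--(T3) --- (T3) because $\codim{T}A<\infty$ and $\add T\subseteq\add(T\oplus T')$ --- hence is tilting, so $|T\oplus T'|=|A|=|T|$ by (3); this forces $\add(T\oplus T')=\add T=\add T'$ and therefore $T\cong T'$, both being basic. For the remaining claims I would use the identity $\pd M=\sup\{i:\Ext_{A}^{i}(M,A)\neq0\}$ for finitely generated $M$ of finite projective dimension (immediate from the fact that the $A$‑dual of a minimal projective resolution has differentials with image in the radical). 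Given $T\succeq T'$ and $d':=\pd T'$, apply $\Hom_{A}(T,-)$ to an $\add T'$‑coresolution $0\to A\to T'^{0}\to\cdots\to T'^{d'}\to0$ of $A$ (available by (2)): since $\Ext_{A}^{\ge1}(T,T')=0$, Lemma \ref{lem-miy}(2) gives $\Ext_{A}^{i}(T,A)=0$ for all $i\ge d'+1$, whence $\pd T\le\pd T'$. Transitivity then follows by combining this monotonicity with the analogous coresolution arguments relating $T$, $T'$ and $T''$, exactly as in \cite{RS91,HU051}.

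The technical heart is the equality $\codim{T}A=\pd T$ in (1) --- especially the truncation step, where one must track precisely which cosyzygy the relevant $\Ext$‑groups land in --- and, by the same token, the propagation of $\Ext$‑vanishing behind transitivity in (4); everything else is formal bookkeeping on top of (1) and (3).
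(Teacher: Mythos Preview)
The paper does not prove this proposition: it is stated as a compendium of classical results with citations to \cite{M86}, \cite{H88}, \cite{RS91}, \cite{HU051}, and no argument is given. Your sketch therefore goes well beyond what the paper supplies. Most of it is correct and standard --- parts (2) and (3) are fine, and in (4) reflexivity, antisymmetry and the monotonicity $\pd T\le\pd T'$ are handled correctly; your treatment of transitivity by deferring to the references is exactly what the paper does.

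There is, however, a genuine gap in your argument for the direction $m\le d$ in (1). You assert that knowing $\Ext_A^{\ge 1}(T,X^d)=0$ and ``feeding this into the tail $0\to X^d\to T^d\to X^{d+1}\to 0$ via Lemma~\ref{lem-miy}'' forces $\Ext_A^1(X^{d+1},X^d)=0$. But Lemma~\ref{lem-miy} applied to that short exact sequence (with $Q=X^d$ or $Q=T$) only relates $\Ext$-groups in degree $\ge 1$ to ones of \emph{higher} degree; it never reaches $\Ext_A^1(X^{d+1},X^d)$. Applying $\Hom_A(-,X^d)$ directly yields $\Ext_A^1(X^{d+1},X^d)\cong\cok\bigl(\Hom_A(T^d,X^d)\to\Hom_A(X^d,X^d)\bigr)$, whose vanishing is precisely the splitting you want --- so as written the step is circular. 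A correct repair: first establish $\Ext_A^{\ge 1}(X^{d+1},T)=0$ by downward induction along the tail from $X^m=T^m$, and $\pd X^{d+1}\le d$ by the same induction using $\pd X\le\max\{\pd Y,\pd Z-1\}$; then apply Lemma~\ref{lem-miy}(2) to the \emph{initial} segment $0\to A\to T^0\to\cdots\to T^{d-1}\to X^d\to 0$ with $Q=X^{d+1}$ to obtain $\Ext_A^j(X^{d+1},X^d)\cong\Ext_A^{j+d}(X^{d+1},A)=0$ for $j\ge 1$. Alternatively, and closer to Miyashita's original argument, one checks that $T\oplus X^d$ is tilting (the missing ingredient $\Ext_A^{\ge 1}(X^d,X^d)=0$ follows from Lemma~\ref{lem-miy}(1) on the tail with $Q=X^d$) and then invokes (3) to conclude $X^d\in\add T$.
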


We recall the notion of (left) mutations of tilting modules with finite projective dimension (see \cite{RS91, HU052, CHU94} for details).
Let $T=X\oplus U$ be an $A$-module with $X$ non-zero.
Take a minimal left $\add U$-approximation $f:X\to \overline{U}$ of $X$.
We call $\mu_{X}(T):=\cok f \oplus U$ a \emph{mutation} of $T$ with respect to $X$.
Assume that $X\in \Sub^{1}(U)$. 
If $T$ is a tilting $A$-module, then $T':=\mu_{X}(T)$ is also a tilting $A$-module.
Moreover, we have $T \succeq T'$.

Next, from an exact sequence associated with $A \in \Sub^{n+1}(Q)$, we construct a $d$-tilting module which is contained in $\Fac_{d}(Q)$.
Conversely, by using a $d$-tilting $A$-module $T\in \Fac_{d}(Q)$, we give a construction of an exact sequence which induces $A\in \Sub^{d}(Q)$.
We need the following useful lemmas.

\begin{lemma}[{see \cite[Proposition A.4.7]{ASS06}}]\label{lem-ASS-A47}
Let $0 \to X \to Y \to Z \to 0$ be an exact sequence in $\mod A$. 
Then we have $\pd Z \leq {\rm max} \{\pd X +1, \pd Y \}$ and the equality holds if $\pd X \neq \pd Y$.
\end{lemma}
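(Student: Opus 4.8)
The plan is to reduce everything to the long exact sequence in $\Ext$ obtained by applying $\Hom_A(-,M)$ to the given short exact sequence $0 \to X \to Y \to Z \to 0$, using throughout the standard fact that for $W \in \mod A$ one has $\pd W \le d$ if and only if $\Ext_A^{i}(W,M)=0$ for all $M \in \mod A$ and all $i>d$, and that a finite value $\pd W = d$ is detected by some $M$ with $\Ext_A^{d}(W,M)\neq 0$. First I would dispose of the case where $\pd X$ or $\pd Y$ is infinite (the inequality is then vacuous, and the equality follows from the same computation below), so that we may put $\pd X = p$ and $\pd Y = q$ with both finite, and set $m:=\max\{p+1,q\}$.

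For the inequality, I would take any $M$ and any $i>m$ and read off from
\[
\Ext_A^{i-1}(X,M) \to \Ext_A^{i}(Z,M) \to \Ext_A^{i}(Y,M)
\]
that $\Ext_A^{i}(Z,M)=0$, since $\Ext_A^{i-1}(X,M)=0$ (because $i-1>p$) and $\Ext_A^{i}(Y,M)=0$ (because $i>q$); hence $\pd Z \le m$. For the equality when $p\neq q$, I would split into two cases. If $p>q$, then $m=p+1$, and in
\[
\Ext_A^{p}(Y,M) \to \Ext_A^{p}(X,M) \to \Ext_A^{p+1}(Z,M) \to \Ext_A^{p+1}(Y,M)
\]
the two $Y$-terms vanish because $p>q$, so $\Ext_A^{p}(X,M)\cong \Ext_A^{p+1}(Z,M)$ for every $M$; picking $M$ with $\Ext_A^{p}(X,M)\neq 0$ forces $\pd Z \ge p+1 = m$. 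If $p<q$, then $m=q$, and from $\Ext_A^{q}(Z,M) \to \Ext_A^{q}(Y,M) \to \Ext_A^{q}(X,M)$ together with $\Ext_A^{q}(X,M)=0$ (as $q>p$) one gets a surjection $\Ext_A^{q}(Z,M)\twoheadrightarrow \Ext_A^{q}(Y,M)$; picking $M$ with $\Ext_A^{q}(Y,M)\neq 0$ forces $\pd Z \ge q = m$. Combined with the inequality, this gives $\pd Z = m$ in both cases.

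I do not expect a genuine obstacle: the argument is pure bookkeeping with the long exact sequence, and the only point requiring a little care is to choose the test module $M$ realising $\pd X$ (resp.\ $\pd Y$) and to observe that the adjacent $\Ext$-terms drop out precisely because $p\neq q$. An alternative route would be dimension shifting through syzygies (using the horseshoe lemma to relate $\Omega^{i}X$, $\Omega^{i}Y$, $\Omega^{i}Z$) or induction on $\min\{p,q\}$, but the $\Ext$ long exact sequence is the most economical.
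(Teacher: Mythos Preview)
Your argument is correct. Note, however, that the paper does not give its own proof of this lemma: it merely records the statement and cites \cite[Proposition A.4.7]{ASS06}. Your long-exact-sequence argument is the standard one and is essentially what one finds in the cited reference, so there is nothing to compare.
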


\begin{lemma}[{see \cite[Lemma 1.1]{M86}}]\label{lem-miy}
Let
\begin{align}
0\to X^{0} \to Y^{0} \xto{f^{1}} Y^{1} \to \cdots \xto{f^{n-2}} Y^{n-2}\xto{f^{n-1}} Y^{n-1} \to X^{n} \to 0 \notag
\end{align}
be an exact sequence and $X^{i}:=\im f^{i}$ for each $1\le i \le n-1$.
Then the following statements hold for an $A$-module $Q$ and an integer $j\ge 1$.
\begin{itemize}
\item[(1)] If both $\Ext_{A}^{j+k}(Y^{k},Q)$ and $\Ext_{A}^{j+k+1}(Y^{k},Q)$ vanish for all $0\le k\le n-1$, then we have
\begin{align}
\Ext_{A}^{j}(X^{0},Q) \cong \Ext_{A}^{j+1}(X^{1},Q) \cong \cdots \cong \Ext_{A}^{j+n}(X^{n},Q). \notag
\end{align}
\item[(2)] If both $\Ext_{A}^{j+n-1-k}(Q,Y^{k})$ and $\Ext_{A}^{j+n-k}(Q,Y^{k})$ vanish for all $0\le k\le n-1$, then we have
\begin{align}
\Ext_{A}^{j}(Q,X^{n}) \cong \Ext_{A}^{j+1}(Q,X^{n-1}) \cong \cdots \cong \Ext_{A}^{j+n}(Q,X^{0}). \notag
\end{align}
\end{itemize}
\end{lemma}

The following result plays an important role in this paper.

\begin{proposition}\label{prop-construct-tilting}
Let $Q$ be an $A$-module with $\pd Q \le 1$ and $\Ext_{A}^{1}(Q,Q)=0$. 
Fix an integer $n\ge 0$.
Then the following statements hold.
\begin{itemize}
\item[(1)] Let
\begin{align}\label{addQ-resolution}
0 \to A \xto{f^{0}} Q^{0} \xto{f^{1}} Q^{1} \to \cdots \xto{f^{n}} Q^{n}
\end{align}
be an exact sequence with $0\neq Q^{i}\in\add Q$ for each $0\le i\le n$. Let $A^{0}:=A$ and $A^{d}:=\cok f^{d-1}$ for $1\le d \le n+1$. 
Assume that the inclusion $\iota^{i}:A^{i}\to Q^{i}$ is a minimal left $\add Q$-approximation for each $0\le i \le n$ and $f^{n}$ is not an epimorphism. 
Then for each $1 \le d \le n+1$, the module $\mathbf{T}^{d}:= A^{d}\oplus Q$ is a $d$-tilting module which is contained in $\Fac_{d}(Q)$.
In particular, $\mathbf{T}^{d+1}$ is a mutation of $\mathbf{T}^{d}$ with respect to $A^{d}$.
\item[(2)] Assume that $T\in\Fac_{d}(Q)$ is a $d$-tilting $A$-module with $Q\in \add T$ for some integer $1\le d \le n+1$.
Then there exists an exact sequence
\begin{align*}
0 \to A \to Q^{0} \to Q^{1} \to \cdots \to Q^{d-1} \to T^{d} \to 0
\end{align*}
such that $0\neq Q^{i} \in \add Q$ for each $0 \le i \le d-1$ and $0 \neq T^{d} \in \add T$. 
In particular, $A \in \Sub^{d}(Q)$.
\end{itemize}
\end{proposition}
\begin{proof}
(1) We check that $T:=\mathbf{T}^{d}$ satisfies the conditions (T1), (T2), and (T3) in Definition \ref{def-tilting}.

(T1) We prove $\pd T\le d$.
This follows from repeated use of Lemma \ref{lem-ASS-A47}.

(T2) We prove $\Ext_{A}^{i}(T,T)=0$ for each $1\le i\le d$. Clearly, we obtain 
\begin{align}
\Ext_{A}^{i}(T,T)
&\cong \Ext_{A}^{i}(Q,Q)\oplus \Ext_{A}^{i}(Q,A^{d})\oplus \Ext_{A}^{i}(A^{d},Q)\oplus \Ext_{A}^{i}(A^{d},A^{d})\notag\\
&\cong \Ext_{A}^{i}(Q,A^{d})\oplus \Ext_{A}^{i}(A^{d},Q)\oplus \Ext_{A}^{i}(A^{d},A^{d}). \notag
\end{align}
First we show $\Ext_{A}^{i}(Q,A^{d})=0$. 
Applying $\Hom_{A}(Q,-)$ to the exact sequence $0\to A^{d-1}\to Q^{d-1}\to A^{d}\to 0$ induces an exact sequence
\begin{align}
\Ext_{A}^{i}(Q,Q^{d-1})\to \Ext_{A}^{i}(Q,A^{d})\to \Ext_{A}^{i+1}(Q,A^{d-1}). \notag
\end{align}
By the assumption on $Q$, we have $\Ext_{A}^{i}(Q,Q^{d-1})=0$ and $\Ext_{A}^{i+1}(Q,A^{d-1})=0$. 
Hence $\Ext_{A}^{i}(Q,A^{d})=0$.
Secondly, we show $\Ext_{A}^{i}(A^{d},Q)=0$.
By the assumption on $Q$, $\Ext_{A}^{j}(Q,Q)=0$ for all $j\ge 1$. Thus it follows from Lemma \ref{lem-miy}(1) that
\begin{align}
\Ext_{A}^{1}(A^{d-i+1},Q)\cong \Ext_{A}^{2}(A^{d-i+2},Q)\cong \cdots \cong \Ext_{A}^{i}(A^{d},Q). \notag
\end{align}
Since $\iota^{d-i}$ is a left $\add Q$-approximation, we have $\Ext_{A}^{1}(A^{d-i+1},Q)=0$ by Lemma \ref{lem-approximation}(2).
In particular, $\Ext_{A}^{j}(A^{d},Q)=0$ for all $j\ge 1$.
Finally, we show $\Ext_{A}^{i}(A^{d},A^{d})=0$.
Since $\Ext_{A}^{j}(A^{d},Q)=0$ for each $j\ge 1$, we obtain isomorphisms
\begin{align}
\Ext_{A}^{i}(A^{d},A^{d})\cong \Ext_{A}^{i+1}(A^{d},A^{d-1})\cong \cdots \cong \Ext_{A}^{i+d}(A^{d},A^{0})\notag
\end{align}
by Lemma \ref{lem-miy}(2).
Since $\pd A^{d}\le d$, we have $\Ext_{A}^{i+d}(A^{d},A^{0})=0$. 
Hence $\Ext_{A}^{i}(A^{d},A^{d})=0$.

(T3) We prove $\codim{T}A=d$. By \eqref{addQ-resolution}, we have the exact sequence
\begin{align}
0 \to A \xto{f^{0}} Q^{0} \xto{f^{1}} Q^{1} \to \cdots \xto{f^{d-1}} Q^{d-1}\to A^{d}\to 0.\notag
\end{align}
This sequence implies the desired property.

(2) Since $\codim{T}A=\pd T =d$ holds by Proposition \ref{prop-basic-result-tilting}(1), there exists an exact sequence
\begin{align}\label{Tdom-resolution}
0 \to A \xto{f^{0}} T^{0} \xto{f^1} T^{1} \to \cdots \xto{f^{d-1}} T^{d-1} \xto{f^{d}} T^{d} \to 0 
\end{align}
with $0\neq T^{i} \in \add T$. 
We put $A^{0} := A$ and $A^{i}:=\cok f^{i-1}$ for $1 \leq i \leq d$.
Without loss of generality, we can assume that the inclusion $\iota^{i}:A^{i}\to T^{i}$ is a minimal left $\add T$-approximation by Lemmas \ref{lem-approximation}(2) and \ref{lem-miy}(1).

In the following, we claim $T^{i} \in \add Q$ for all $0 \leq i \leq d-1$.
By Proposition \ref{prop-clds}(2), we have $T^{i}\in\Fac_{d}(Q)$.
Thus there exists an exact sequence
\begin{align}
Q_{d-1}^{i}\xto{g_{d-1}^{i}} \cdots \to Q_{i}^{i} \xto{g_{i}^{i}}\cdots \to Q_{1}^{i} \xto{g_{1}^{i}} Q_{0}^{i} \xto{g_{0}^{i}}T^{i} \to 0. \notag
\end{align}
such that $Q^{i}_{0},Q^{i}_{1},\ldots,Q^{i}_{d-1}\in \add Q$.
Let $T_{j}^{i}:=\ker g_{j-1}^{i}$ for each $1\le j\le d$.
Applying $\Hom_{A}(A^{i},-)$ to the exact sequence $0\to T_{1}^{i}\to Q_{0}^{i}\to T^{i}\to 0$, we obtain an exact sequence
\begin{align}
\Hom_{A}(A^{i},Q_{0}^{i}) \to \Hom_{A}(A^{i},T^{i})\to \Ext_{A}^{1}(A^{i},T_{1}^{i}). \notag
\end{align}
It is enough for us to show that $\Ext_{A}^{1}(A^{i},T_{1}^{i})=0$.
Indeed, if it is true, then there exists $h^{i}:A^{i}\to Q_{0}^{i}$ such that $\iota^{i}=g_{0}^{i}h^{i}$.
Since $\iota^{i}$ is a left $\add T$-approximation and $Q_{0}^{i} \in \add T$, we have a morphism $\alpha:T^{i}\to Q_{0}^{i}$ with $h^{i}=\alpha \iota^{i}$. 
Therefore we obtain $\iota^{i}=g^{i}_{0}h^{i}=g^{i}_{0}\alpha\iota^{i}$.
By minimality of $\iota^{i}$, the morphism $g^{i}\alpha$ is an isomorphism.
Hence $T^{i}\in\add Q$.

In the rest, we prove $\Ext_{A}^{1}(A^{i},T_{1}^{i})=0$ for each $0 \le i\le d-1$. First we show $\Ext_{A}^{j}(A^{i},Q)=0$ for all $j\ge 1$ and $0\le i \le d$.
Since $\Ext_{A}^{k}(T,Q)=0$ for each $k\ge 1$, we have isomorphisms
\begin{align}
\Ext_{A}^{j}(A^{i},Q)\cong \Ext_{A}^{j+1}(A^{i+1},Q)\cong \cdots \cong \Ext_{A}^{j+d-i}(A^{d},Q)\cong\Ext_{A}^{j+d-i}(T^{d},Q)=0\notag
\end{align}
by Lemma \ref{lem-miy}(1). 
Next, we show $\Ext_{A}^{1}(A^{i},T_{1}^{i})=0$ by induction on $i$. 
If $i=0$, then this is clear. 
Assume $i \ge 1$.
Since $\Ext_{A}^{j}(A^{i}, Q)=0$ holds for all $j\ge 1$, it follows from Lemma \ref{lem-miy}(2) that
\begin{align}
\Ext_{A}^{1}(A^{i},T_{1}^{i})\cong \Ext_{A}^{2}(A^{i},T_{2}^{i}) \cong \cdots \cong \Ext_{A}^{i+1}(A^{i},T_{i+1}^{i}). \notag
\end{align}
By induction hypothesis, $\Ext_{A}^{1}(A^{j},T_{1}^{j})=0$ holds for each $0\le j \le i-1$. Therefore $T^{j} \in \add Q$.
Since $\pd Q\le 1$, we obtain $\pd A^{i}\le i$ by repeated use of Lemma \ref{lem-ASS-A47}. 
This implies $\Ext_{A}^{i+1}(A^{i},T_{i+1}^{i})=0$, and hence $\Ext_{A}^{1}(A^{i},T_{1}^{i})=0$.
Therefore $T^{i} \in \add Q$ for each $0 \le i \le d-1$.
By letting $Q^{i}:=T^{i}$ for each $0\le i\le d-1$, \eqref{Tdom-resolution} is the desired sequence.
\end{proof}

In Proposition \ref{prop-construct-tilting}(1), let $Q$ be a maximal projective-injective direct summand of $A$. 
Then $\mathbf{T}^{d}=A^{d}\oplus Q$ coincides with the tilting module which is described in \cite{CX16, CBS17, NRTZ19, PrSa}. 
Thus Proposition \ref{prop-construct-tilting}(1) can be regarded as one of generalizations of their results.

In the following, we give an example of Proposition \ref{prop-construct-tilting}(1).
Let $\mathbb{T}^{d}$ be a basic module of $\mathbf{T}^{d}$.

\begin{example}
Let $A$ be the algebra defined by the quiver
\begin{align} 
\xymatrix@=15pt{1 \ar[r]^{\alpha}\ar[d]_{\beta} & 2 \ar[d]_{\gamma} \\
 3 \ar[r]_{\delta}  & 4\ar[r]_{\epsilon} & 5 \ar[lu]_{\varphi} 
 }\notag
\end{align}
with relations $\alpha \gamma - \beta \delta$, $\epsilon \varphi$ and $\varphi \gamma$.
Let $Q:= P(1) \oplus X \oplus P(1)/P(3) \oplus P(5)$, where $X:= \cok (P(2) \to P(1) \oplus P(5))$. 
Then $Q$ is a non-injective module with $\pd Q \le 1$ and $\Ext_A^1(Q, Q) =0$. 
We can check that $A$ has an exact sequence
\begin{align}
0 \to A \to P(1)^{\oplus 4} \oplus P(5)^{\oplus 2} \to X^{\oplus 2} \oplus (P(1)/P(3))^{\oplus 2}. \notag
\end{align}
Then we obtain that $\mathbb{T}^{1}=P(1)/P(4)\oplus Q$ is a $1$-tilting $A$-module and $\mathbb{T}^{2}=I(2)\oplus Q$ is a $2$-tilting $A$-module.
\end{example}

If we do not assume $\pd Q\le 1$, then Proposition \ref{prop-construct-tilting}(1) does not necessarily hold as the following example shows.

\begin{example}
Let $A$ be the algebra defined by the quiver
\begin{align} 
\xymatrix@=15pt{1 \ar[rd]_{\gamma} &2 \ar[l]_{\alpha} & 3 \ar[l]_{\beta} \\
 & 4\ar[u]_{\delta}}\notag
\end{align}
with relations $\beta \alpha$, $\gamma \delta$ and $\delta \alpha$. 
Then $A$ has a minimal injective coresolution
\begin{align}
0 \to A \to I(2)^{\oplus 2} \oplus I(4)^{\oplus 2} \to I(2) \oplus I(3) \oplus I(4) \to I(1) \oplus I(3) \oplus I(4) \to I(1) \to 0. \notag
\end{align}
Setting $Q=I(2)\oplus I(3) \oplus I(4)$, we have $\pd Q = 2$ and $A\in \Sub^{2}(Q)$.
Then we have $\mathbb{T}^{1}= S(2) \oplus S(4)\oplus Q$.
However, we obtain that $\Ext_{A}^{1}(\mathbb{T}^{1},\mathbb{T}^{1}) \neq 0$ since $\Ext_{A}^{1}(I(2), S(2)) \cong \Hom_{A}(P(2), S(2)) \neq 0$.
Therefore, in this case, we can not obtain tilting modules by the  construction of Proposition \ref{prop-construct-tilting}(1).
\end{example}

As an application of Proposition \ref{prop-construct-tilting}(1), we give the minimum element in
\begin{align}
\tilt^{\succeq Q}_{d}A:=\{ T \in \tilt A \mid \pd T \le d\ \textnormal{and}\ T \succeq Q \}, \notag
\end{align}
which is an analogue of \cite[Theorem 3.4(2)]{IZ19}. 

\begin{corollary}
Keep the notation in Proposition \ref{prop-construct-tilting}(1).
For each $1\le d \le n+1$, the $d$-tilting $A$-module $\mathbb{T}^{d}$ is the minimum element in $\tilt^{\succeq Q}_{d}A$.
\end{corollary}
\begin{proof}
Let $\mathbb{T}^{d}$ be the basic module of the $d$-tilting module $A^{d}\oplus Q$.
We claim that $\Ext_{A}^{i}(T,\mathbb{T}^{d})=0$ for each tilting $A$-module $T\in \tilt^{\succeq Q}_{d}A$ and all integers $i>0$.
It is enough to show that $\Ext_{A}^{i}(T,A^{d})=0$.
This follows from Lemma \ref{lem-miy}(2).
Hence we have the assertion.
\end{proof}

We give a remark on Bongartz completion.
\begin{remark}
Let $Q$ be a basic $A$-module with $\pd Q \le 1$ and $\Ext_{A}^{1}(Q,Q)=0$.
Namely, it is partial tilting. 
Then Bongartz \cite{B81} describes explicitly an $A$-module $X$ such that $T:=X\oplus Q$ is a basic tilting module with projective dimension at most one.
Note that $\mathbb{T}^{1}$ is not always isomorphic to $T$. 
For example, we assume that $A$ is a path algebra of Dynkin type of $\mathbb{A}$ with linear orientation and take $Q$ a maximal projective-injective direct summand of $A$.
Then we have $T=A$ and $\mathbb{T}^{1}=\kD A$.
\end{remark}

Now we state our main result of this paper.

\begin{theorem}\label{main-thm1}
Fix an integer $n\ge 0$.
Let $A$ be an artin algebra and $I$ an injective $A$-module with $\pd I \le 1$.
Then the following statements are equivalent.
\begin{itemize}
\item[(1)] $\gdom{I}A\ge n+1$.
\item[(2)] For each $0\le d\le \min\{\id A, n+1\}$, $\Fac_{d}(I)\cap \Sub^{n+1-d}(I)$ admits a unique basic $d$-tilting $A$-module.
\item[(3)] There exists an integer $0\le d\le \min\{\id A, n+1\}$ such that $\Fac_{d}(I)\cap \Sub^{n+1-d}(I)$ admits a unique basic $d$-tilting $A$-module.
\end{itemize}
\end{theorem}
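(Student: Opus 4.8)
The plan is to prove the cyclic chain of implications $(1)\Rightarrow(2)\Rightarrow(3)\Rightarrow(1)$, with $(2)\Rightarrow(3)$ being essentially trivial (take $m=n$ and $d=\min\{\id A, n+1\}$, noting that if $\min\{\id A, n+1\}\le m+1$ we are done, while otherwise we shrink $n$ appropriately — in fact the existence clause in (2) for $d$ already gives (3) once we check $d\le n+1$). So the real content is $(1)\Rightarrow(2)$ (construction and uniqueness of the tilting modules) and $(3)\Rightarrow(1)$ (recovering the dominant dimension bound from a single tilting module).

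For $(1)\Rightarrow(2)$, I would start from a minimal injective coresolution $0\to A\to I^{0}\to I^{1}\to\cdots\to I^{n}\to\cdots$ with $I^{0},\dots,I^{n}\in\add I$, and proceed by induction on $d$ to build the required $d$-tilting module $\mathbb{T}^{d}$ for each $0\le d\le\min\{\id A, n+1\}$. The base case $d=0$ is $\mathbb{T}^{0}=A$ itself, which lies in $\Fac_{0}(I)\cap\Sub^{n+1}(I)=\Sub^{n+1}(I)$ precisely by hypothesis (1). For the inductive step I would use tilting mutation: write $\mathbb{T}^{d}=X^{d}\oplus U^{d}$ where $X^{d}$ collects the indecomposable summands of maximal projective dimension $d$ that also appear in the top term of the $(T3)$-coresolution of $\mathbb{T}^{d}$ (this is where Proposition~\ref{prop-basic-result-tilting}(2) identifying $\add\mathbb{T}^{d}$ with the summands of the coresolution is used), check the mutation hypotheses $\add X^{d}\cap\add U^{d}=\{0\}$ and $X^{d}\in\Sub^{1}(U^{d})$ — the latter being the crucial point, extracted from the fact that the relevant cosyzygies live in $\add I$ — and then invoke Proposition~\ref{prop-mutation}(2) to conclude that $\mathbb{T}^{d+1}:=\mu_{X^{d}}(\mathbb{T}^{d})$ is a $(d+1)$-tilting module. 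One then has to verify membership $\mathbb{T}^{d+1}\in\Fac_{d+1}(I)\cap\Sub^{n+1-(d+1)}(I)$: the $\Fac$ part follows from the mutation exact sequence (the cokernel $Y$ sits over $\add U\subseteq\add I$-objects coming from left approximations, using Proposition~\ref{prop-sub}(2) applied to $A^{\op}$ or directly), and the $\Sub$ part follows because passing one step up the mutation chain trades one layer of $\Fac$ for one lost layer of $\Sub$, while the total $n+1$ layers are guaranteed by (1). Uniqueness should come from Proposition~\ref{prop-basic-result-tilting}(4): any basic $d$-tilting module in $\mathcal{C}_{d}$ is forced, via the partial order $\succeq$ and $|\mathbb{T}|=|A|$, to coincide with $\mathbb{T}^{d}$, because the membership conditions pin down the $(T3)$-coresolution terms inside $\add I$.

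For $(3)\Rightarrow(1)$, suppose $\mathbb{T}$ is a $d$-tilting module in $\Fac_{d}(I)\cap\Sub^{n+1-d}(I)$. I would take a $(T3)$-coresolution $0\to A\to\mathbb{T}^{0}\to\cdots\to\mathbb{T}^{d}\to 0$ with $\mathbb{T}^{i}\in\add\mathbb{T}$, and an exact sequence witnessing $\mathbb{T}\in\Sub^{n+1-d}(I)$, namely $0\to\mathbb{T}\to I^{0}\to\cdots\to I^{n-d}$ with $I^{j}\in\add I$; since $\mathbb{T}\in\Fac_{d}(I)$ with $\pd\mathbb{T}=d$ one also has the required control on the low-degree behaviour. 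Splicing these — the coresolution of $A$ into $\add\mathbb{T}$-terms, then each $\add\mathbb{T}$-term into $\add I$-terms using $\mathbb{T}\in\Sub^{n+1-d}(I)$ and $\add\mathbb{T}=\add\bigoplus\mathbb{T}^{i}$ — should yield an exact sequence $0\to A\to I^{\prime 0}\to\cdots\to I^{\prime n}$ with all terms in $\add I$, which up to taking the minimal injective coresolution gives $\gdom{I}A\ge n+1$. Here one uses Lemma~\ref{lem-miy} to keep track of vanishing $\Ext$'s so that the splice stays exact and the injective terms are genuinely in $\add I$ rather than merely injective.

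The main obstacle I anticipate is the careful bookkeeping in $(1)\Rightarrow(2)$: verifying that at each mutation step the hypothesis $X^{d}\in\Sub^{1}(U^{d})$ holds and that the decomposition $\mathbb{T}^{d}=X^{d}\oplus U^{d}$ is the correct one (in particular that $X^{d}\ne 0$ so the projective dimension genuinely increases), together with confirming the precise index bookkeeping $\Fac_{d}\cap\Sub^{n+1-d}$ is preserved — these require a delicate simultaneous induction tracking both the coresolution of $A$ over $\add\mathbb{T}^{d}$ and the cosyzygy filtration over $\add I$. The splicing arguments and the uniqueness via the tilting partial order are comparatively routine once the homological vanishing from Lemma~\ref{lem-miy} and Proposition~\ref{prop-sub} is in hand.
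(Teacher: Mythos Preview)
Your plan for $(1)\Rightarrow(2)$ is close to the paper's: the paper also builds the tilting modules by mutation, packaging this as $\mathbb{T}^{d}=A^{d}\oplus I$ where $A^{d}$ is the $d$-th cosyzygy in a minimal injective coresolution of $A$ (Proposition~\ref{prop-construct-tilting}). For uniqueness, however, the paper does not use the partial order in the way you sketch. Instead it proves directly (Lemma~\ref{lem-uniquness-tilting}) that \emph{any} two modules $X,Y\in\Fac_{d}(I)$ with $\pd\le d$ satisfy $\Ext^{>0}_{A}(X,Y)=0$, by Lemma~\ref{lem-miy}(2) applied to the $\add I$-presentation of $Y$ together with $\pd I\le 1$; hence the direct sum of all such indecomposables is itself tilting once one tilting module exists, forcing uniqueness by the count $|T|=|A|$. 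Your ``membership conditions pin down the (T3)-coresolution'' is not a proof as stated.

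The real gap is in $(3)\Rightarrow(1)$. Your splicing argument---resolving each $T^{i}$ in the (T3)-coresolution into $n+1-d$ terms of $\add I$ and assembling---does \emph{not} yield $A\in\Sub^{n+1}(I)$. A Horseshoe/total-complex count gives at best $A\in\Sub^{n+1-d}(I)$: the term $T^{0}$ only contributes $n+1-d$ layers, and the higher $T^{i}$'s cannot make up the missing $d$. Notice also that your argument never uses the hypothesis $T\in\Fac_{d}(I)$, which should already be a warning sign. The paper's key step, which you are missing, is to show that in the \emph{minimal} (T3)-coresolution
\[
0\to A\to T^{0}\to T^{1}\to\cdots\to T^{d-1}\to T^{d}\to 0,
\]
the first $d$ terms $T^{0},\dots,T^{d-1}$ already lie in $\add I$, not merely in $\add T$. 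This is proved by induction on $i$: writing $T^{i}=X^{i}\oplus I^{i}$ with $X^{i}$ having no injective summand, one uses $T\in\Fac_{d}(I)$ to produce an $\add I$-presentation of $X^{i}$, and then shows that the minimal left $\add T$-approximation $A^{i}\hookrightarrow T^{i}$ factors through $\add I$ because $\Ext^{1}_{A}(A^{i},-)$ vanishes on the relevant syzygy (using $\pd A^{i}\le i$, which follows from the inductive hypothesis $T^{0},\dots,T^{i-1}\in\add I$). Minimality then forces $X^{i}=0$. Once $T^{0},\dots,T^{d-1}\in\add I$, splicing with a $\Sub^{n+1-d}(I)$-sequence for $T^{d}$ alone gives the required $d+(n+1-d)=n+1$ layers.
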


In the following, we give a proof of the theorem.
To show the uniqueness of the tilting modules in the statements (2) and (3), we need the following lemma.

\begin{lemma}\label{lem-uniquness-tilting}
Fix an integer $d\ge 1$. 
Let $I$ be an injective $A$-module with $\pd I\le d$ and $\{ X_j \}_{j \in J}$ the set of representatives of the isomorphism classes of all  indecomposable $A$-modules in $\Fac_{d}(I)$ such that $\pd X_{j} \le d$.
Assume that $\Fac_{d}(I)$ admits a basic tilting $A$-module $T$ with $\pd T \le d$.
Then the following statements hold. 
\begin{itemize}
\item[(1)] If $J'$ is a finite subset of $J$, then $X_{J'}\oplus T$ is tilting, where $X_{J'}:=\bigoplus_{j \in J'}X_{j}$. 
\item[(2)] $J$ is a finite set. 
\item[(3)] $T$ is isomorphic to $X_{J}$.
In particular, $\Fac_{d}(I)$ has a unique basic tilting module with projective dimension at most $d$ if it exists.
\end{itemize}
\end{lemma}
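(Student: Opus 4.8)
My plan is to isolate one vanishing statement and deduce all four parts from it. First I would record: \emph{if $S$ is an $A$-module with $\pd S\le d$ and $M\in\Fac_{d}(I)$, then $\Ext_{A}^{i}(S,M)=0$ for all $i\ge 1$}. To prove this, take an exact sequence $I_{d-1}\to\cdots\to I_{0}\to M\to 0$ with each $I_{k}\in\add I$ and prepend the kernel $L$ of its leftmost map, obtaining an exact sequence $0\to L\to I_{d-1}\to\cdots\to I_{0}\to M\to 0$. Since $I$ is injective, each $I_{k}$ is an injective object, so $\Ext_{A}^{p}(S,I_{k})=0$ for all $p\ge 1$; applying Lemma~\ref{lem-miy}(2) to this sequence with $Q=S$ (its middle terms being injective, and $\pd S\le d$) gives $\Ext_{A}^{i}(S,M)=0$ for all $i\ge 1$. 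Only injectivity of $I$ is used here. I would also use throughout that $\Fac_{d}(I)$ is closed under finite direct sums and under direct summands, the latter by an easy induction on $d$.

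Given this, I would prove (1) by checking the three tilting axioms for $S:=X_{J'}\oplus T$ with $J'\subseteq J$ finite. Axiom (T1) holds since $\pd S\le d<\infty$. For (T2), $\Ext_{A}^{i}(S,S)$ decomposes as $\Ext_{A}^{i}(T,T)\oplus\Ext_{A}^{i}(T,X_{J'})\oplus\Ext_{A}^{i}(X_{J'},S)$; the first summand vanishes by hypothesis, and the other two vanish by the statement above, because $\pd T\le d$, $\pd X_{J'}\le d$, and both $X_{J'}:=\bigoplus_{j\in J'}X_{j}$ and $S$ lie in $\Fac_{d}(I)$. For (T3), $\codim{S}A\le\codim{T}A<\infty$ since $\add T\subseteq\add S$. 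Hence $S$ is tilting. For (2), I would apply this with $J'=\{j\}$: then $X_{j}\oplus T$ is tilting, and since $\add T\subseteq\add(X_{j}\oplus T)$ while both of these categories contain exactly $|A|$ indecomposable objects by Proposition~\ref{prop-basic-result-tilting}(3), they coincide, so $X_{j}\in\add T$. Thus the $X_{j}$ are pairwise non-isomorphic summands of the fixed module $T$, whence $|J|\le|A|<\infty$.

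Part (3) is then a counting argument: by (2), $J$ is finite and $\add X_{J}\subseteq\add T$, where $X_{J}:=\bigoplus_{j\in J}X_{j}$. Conversely, any indecomposable summand $T'$ of $T$ satisfies $\pd T'\le\pd T\le d$ and $T'\in\Fac_{d}(I)$ (a summand of $T\in\Fac_{d}(I)$), hence $T'\cong X_{j}$ for some $j$; so $\add T=\add X_{J}$, and as both modules are basic, $T\cong X_{J}$. Since $X_{J}$ depends only on $I$ and $d$, this also gives the uniqueness claim. For (4), write $T=X\oplus E$ with $E$ the maximal injective direct summand of $T$. Using $T\in\Sub^{1}(I)$, fix a monomorphism $T\hookrightarrow I^{0}$ with $I^{0}\in\add I$; precomposing with the split inclusion of any indecomposable summand of $E$ gives a monomorphism from an injective module into $I^{0}\in\add I$, which splits, so $E\in\add I$. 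Conversely, $I$ is injective with $\pd I\le 1\le d$ and $I\in\add I\subseteq\Fac_{d}(I)$, so every indecomposable summand of $I$ is one of the $X_{j}$, hence lies in $\add T$ by (3), and being injective it lies in $\add E$; thus $I\in\add E$, and therefore $\add E=\add I$, which is the asserted decomposition $T=X\oplus I$.

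The crux is the vanishing statement of the first paragraph; once it is available, the remaining work is routine axiom-chasing for tilting modules together with the standard results in Proposition~\ref{prop-basic-result-tilting}. The only secondary point that needs care is the stability of $\Fac_{d}(I)$ under direct summands, since that is precisely what lets every indecomposable summand of $T$, and of $I$, be recognised as one of the $X_{j}$ in parts (3) and (4).
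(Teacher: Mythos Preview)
Your proposal is correct and takes essentially the same approach as the paper: the key vanishing via Lemma~\ref{lem-miy}(2) and the counting through Proposition~\ref{prop-basic-result-tilting}(3) are identical in substance, and your arguments for (1) and (4) match the paper's. Your treatments of (2) and (3) are mild reorganizations (you show each $X_j\in\add T$ directly rather than by contradiction, and verify both inclusions $\add X_J=\add T$ explicitly), and you are more careful than the paper in flagging the closure of $\Fac_d(I)$ under direct summands, which the paper uses tacitly when asserting $T\in\add X_J$ ``by definition''.
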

\begin{proof}
(1) The conditions (T1) and (T3) clearly hold. 
The condition (T2) follows from Lemma \ref{lem-miy}(2).

(2) Suppose that $J'$ is any finite subset of $J$ with $|A|<|X_{J'}|$.
Since $T$ and $X_{J'}\oplus T$ are tilting $A$-modules, we have $X_{J'}\in \add T$, and hence $|X_{J'}|\le |T|$, a contradiction to Proposition \ref{prop-basic-result-tilting}(3).

(3) By definition, we obtain $T\in \add X_{J}$. 
Since $X_{J}\oplus T$ is also tilting by (1), we have the assertion.
\end{proof}

Now we are ready to prove Theorem \ref{main-thm1}.

\begin{proof}[Proof of Theorem \ref{main-thm1}]
Let $I$ be an injective $A$-module with $\pd I \le 1$.

(2)$\Rightarrow$(3): This is clear.

(3)$\Rightarrow$(1): 
By our assumption, there exists a basic $d$-tilting module $T \in \Fac_d(I) \cap \Sub^{n+1-d}(I)$ for some integer  $0 \le d \le \min\{ \id A, n+1 \}$.
If $d=0$, then we obtain $A=T\in \Sub^{n+1}(I)$, and hence $\gdom{I}A\ge n+1$.
Therefore we assume $d\ge 1$.
By Lemma \ref{lem-uniquness-tilting}(3), $I\in \add T$.
Applying Proposition \ref{prop-construct-tilting}(2) to $T$, we have an exact sequence 
\begin{align}
0 \to A \to I^{0}\to \cdots \to I^{d-1} \to T^{d} \to 0. \notag
\end{align}
such that $I^{j} \in \add I$ for each $0 \le j \le d-1$ and $T^{d} \in  \add T$.
By Proposition \ref{prop-clds}(1), we have $T^{d} \in \Sub^{n+1-d}(I)$.
Thus there exists an exact sequence
\begin{align}
0 \to T^{d} \to I^{d} \to \cdots \to I^{n} \notag
\end{align}
with $I^{i}\in \add I$ for all $d\le i\le n$. 
Composing two exact sequences, we have the following exact sequence:
\begin{align}
\xymatrix{
0\ar[r]&A\ar[r]&I^{0}\ar[r]&\cdots \ar[r]&  I^{d-1}\ar[rr] \ar[rd]&&I^{d}\ar[r]&\cdots\ar[r]&I^{n}.\\
&&&&&T^{d}\ar[ru]&&&
}\notag
\end{align}
Hence we obtain the assertion.

(1)$\Rightarrow$(2): 
If $A$ is self-injective, then there is nothing to prove.
We assume that $A$ is not self-injective.
Let $m:=\min\{\id A -1, n\}$.
Since $\id A \ge 1$ and $\gdom{I}A\ge n+1$, there exists a minimal injective coresolution of $A$
\begin{align}\label{seq-minimal-inj-coresol}
0 \to A \xto{f^{0}} I^{0} \xto{f^{1}} I^{1} \to \cdots \xto{f^{m}} I^{m} \to \cdots 
\end{align}
such that $I^{0},I^{1},\ldots,I^{m}\in \add I$ are non-zero and $f^{m}$ is not an epimorphism.
Note that the inclusion $\iota^{i}: \im f^{i}\to I^{i}$ is a minimal left $\add I$-approximation for each $0\le i \le m$.
For $0\le d \le m+1$, put
\begin{align}
\mathbf{T}^{d}=
\begin{cases}
\ A &(d=0),\\
\ \cok f^{d-1}\oplus I &(d\neq 0).
\end{cases}\notag
\end{align}
If $d=0$, then $\mathbf{T}^{0}$ is clearly a $0$-tilting module.
Since a basic $0$-tilting module is unique (up to isomorphism), $\Fac_{0}(I)\cap \Sub^{n+1}(I)$ admits a unique $0$-tilting module.
Assume $d\neq 0$. 
Due to Proposition \ref{prop-construct-tilting}(1), $\mathbf{T}^{d}\in \Fac_{d}(I)$ is a $d$-tilting $A$-module. 
Moreover, by the definition of $m$, we obtain $\mathbf{T}^{d}\in \Sub^{n+1-d}(I)$. Hence it follows from Lemma \ref{lem-uniquness-tilting}(3) that $\Fac_{d}(I)\cap \Sub^{n+1-d}(I)$ admits a unique basic $d$-tilting module.
The proof is complete.
\end{proof}

\begin{notation}\label{not-unique-tilting}
If the equivalent conditions of Theorem \ref{main-thm1} hold, $\Fac_{d}(I)\cap\Sub^{n+1-d}(I)$ admits a unique basic $d$-tilting $A$-module. 
For each $1\le d\le \min\{\id A, n+1\}$, we denote it by $\mathbb{T}^{d}$.
Then $\mathbb{T}^{d}$ is the basic module of $A^{d}\oplus I$, where $A^{d}$ is the cokernel of $f^{d-1}$ in \eqref{seq-minimal-inj-coresol}.
\end{notation}

\section{Tilting modules and almost Auslander--Gorenstein algebras}
In this section, we give characterizations of almost $n$-Auslander--Gorenstein algebras and almost
$n$-Auslander algebras by the existence of the tilting modules in Theorem \ref{main-thm1}.
We start this section with giving the definition of these algebras.

\begin{definition}
Fix an integer $n\ge 0$.
Let $A$ be an artin algebra and $I$ a basic injective $A$-module with the property that $\add I$ consists of all injective $A$-modules with projective dimension at most one.
\begin{itemize}
\item[(1)] We call $A$ a \emph{minimal almost $n$-Auslander--Gorenstein algebra} if it satisfies
\begin{align}
\id A \le n+1 \le \gdom{I} A. \notag
\end{align}
\item[(2)] We call $A$ an \emph{almost $n$-Auslander algebra} if it satisfies
\begin{align}
\gl{A} \le n+1 \le \gdom{I} A. \notag
\end{align}
\end{itemize}
\end{definition}

Throughout this paper, for brevity we omit the word ``minimal'' in minimal almost $n$-Auslander--Gorenstein algebras.
Here are some examples of almost $n$-Auslander--Gorenstein algebras and almost $n$-Auslander algebras. 

\begin{example}
\begin{itemize}
\item[(1)] Recall that $A$ is a \emph{minimal $n$-Auslander--Gorenstein algebra} (respectively, \emph{$n$-Auslander algebra}) if and only if $\id A \le n+1 \le  \dom A$ (respectively, $\gl A\le n+1 \le \dom A$).
Clearly minimal $n$-Auslander--Gorenstein algebras (respectively, $n$-Auslander algebras) are almost $n$-Auslander--Gorenstein algebras (respectively, almost $n$-Auslander algebras).
\item[(2)] Recall that an artin algebra is called a \emph{relative Auslander algebra} if it is the endomorphism algebra of an additive generator of a faithful torsion-free class of another artin algebra.
Iyama \cite[Theorem 2.1]{I105} shows that an artin algebra $A$ is a relative Auslander algebra if and only if both $A$ and $A^{\op}$ are almost $1$-Auslander algebras.
\end{itemize} 
\end{example}

In the rest of this section, we always assume that $I$ is a basic injective $A$-module with the property that $\add I$ consists of all injective $A$-modules with projective dimension at most one.
We give concrete examples of almost $n$-Auslander--Gorenstein algebras and almost $n$-Auslander algebras. 
The following example may be obtained via a general construction we will explain later in Proposition \ref{prop-hmky}.

\begin{example}\label{ex-alag}
Let $n \ge 4$ be an integer and $A$ the algebra defined by the quiver
\begin{align} 
\xymatrix@R=-2mm{ \ar@(ur,ul)_{\beta_1} & \ar@(ur,ul)_{\beta_2} &  & \ar@(ur,ul)_{\beta_{n-1}}&  \ar@(ur,ul)_{\beta_n} \\
1 \ar[r]^{\alpha_1}  & 2 \ar[r]^{\alpha_2\ } & \cdots \ar[r]^{\alpha_{n-2}\ } & n-1 & n \ar[l]_{\ \ \alpha_{n-1}}  
}\notag
\end{align}
with relations $\alpha_{i}\alpha_{i+1}$ ($1\le i \le n-3$), $\beta_{i}\alpha_{i}-\alpha_{i}\beta_{i+1}$ ($1\le i \le n-2$), $\beta_{i}^{2}$ ($1\le i \le n$) and $\beta_{n}\alpha_{n-1}-\alpha_{n-1}\beta_{n-1}$.
Then we obtain $\gl A =\infty$ and $I=I(2)\oplus I(3)\oplus \cdots \oplus I(n)$.
We can check that $A$ has a minimal injective coresolution
\begin{align}
0\to A \to I^{0} \to I^{1} \to I^{2} \to \cdots \to I^{n-2} \to 0, \notag
\end{align}
where $I^{0}:=I(2)\oplus \cdots \oplus I(n-2) \oplus I(n-1)^{\oplus 3}$, $I^{1}:=I(n-2)^{\oplus 2}\oplus I(n)$, $I^{2}:=I(n-3)^{\oplus 2}$, $\ldots$, and $I^{n-2}:=I(1)^{\oplus 2}$.  
Thus $A$ is an almost $(n-3)$-Auslander--Gorenstein algebra which is not an almost $(n-3)$-Auslander algebra.

If $A'$ is the factor algebra $A/I'$, where $I'$ is the two-sided ideal of $A$ generated by $\beta_{i}$ ($1\le i \le n$), then we can easily check that $A'$ is an almost $(n-3)$-Auslander algebra. 
\end{example}

In the following, we give various properties of almost $n$-Auslander--Gorenstein algebras and almost $n$-Auslander algebras.
First note that $A$ being almost $n$-Auslander--Gorenstein does not imply that $A^{\op}$ is (see Remark \ref{rem-I211} and Example \ref{ex-alag}) although $A$ being minimal $n$-Auslander--Gorenstein implies that $A^{\op}$ is (see \cite[Proposition 4.1(a)]{IS18}).
Next we show that almost $n$-Auslander--Gorenstein algebras are Iwanaga--Gorenstein algebras. 
Recall that an artin algebra $A$ is called an \emph{$n$-Iwanaga--Gorenstein algebra} if it satisfies $\id A \le n$ and $\id A^{\op}\le n$.
Note that if both $\id A$ and $\id A^{\op}$ are finite, then $\id A =\id A^{\op}$ (see \cite[Lemma 6.9]{AR91}).

\begin{proposition}\label{prop-property-alag}
Fix an integer $n\ge 0$ and let $A$ be an artin algebra. Then the following statements hold.
\begin{itemize}
\item[(1)] $A$ is an almost $n$-Auslander--Gorenstein algebra if and only if $A$ is an $(n+1)$-Iwanaga--Gorenstein algebra with $\gdom{I}A\ge n+1$.
\item[(2)] The following statements are equivalent.
\begin{itemize}
\item[(a)] $A$ is an almost $0$-Auslander--Gorenstein algebra.
\item[(b)] $A$ is an almost $n$-Auslander--Gorenstein algebra with $\id A \le 1$.
\item[(c)] $A$ satisfies $\id A \le 1$ and $\gdom{I}A=\infty$.
\item[(d)] $A$ is a $1$-Iwanaga--Gorenstein algebra.
\end{itemize}
In particular, an almost $0$-Auslander--Gorenstein algebra is an almost $m$-Auslander--Gorenstein algebra for all integers $m\ge 0$.
\item[(3)] $A$ is an almost $n$-Auslander--Gorenstein algebra with $\id A>1$ if and only if $A$ satisfies $\id A =n+1=\gdom{I}A$.
\end{itemize}
\end{proposition}

\begin{proposition}\label{prop-property-alaus}
Let $A$ be an artin algebra. 
Then the following statements hold.
\begin{itemize}
\item[(1)] For each $n \ge 0$, almost $n$-Auslander algebras coincide with almost $n$-Auslander--Gorenstein algebras with finite global dimension. 
\item[(2)] Almost $0$-Auslander algebras coincide with hereditary algebras. 
\item[(3)] $A$ is an almost $n$-Auslander algebra with $\gl A >1$ if and only if $A$ satisfies $\gl A = n+1 =\gdom{I}A$.
\end{itemize}
\end{proposition}

To show Propositions \ref{prop-property-alag} and \ref{prop-property-alaus}, we need the following lemma.

\begin{lemma}\label{lem-ARS}
Let $A$ be an artin algebra with $\id A =m+1$ and let
\begin{align}
0\to A \to I^{0} \to I^{1}\to \cdots \to I^{m}\to I^{m+1} \to 0 \notag
\end{align}
be a minimal injective coresolution of $A$.
Then the following statements hold.
\begin{itemize}
\item[(1)] If $\gl A <\infty$, then we have $\id A =\gl A$.
\item[(2)] If $I'\in \add I^{m+1}$, then $\pd I'\ge m+1$.
\end{itemize}
In the following, we also assume that $Q$ is an injective $A$-module with projective dimension at most one and $\gdom{Q}A\ge m+1$.
\begin{itemize}
\item[(3)] We have $\kD A=\mathbb{T}^{m+1}$ $($see Notation \ref{not-unique-tilting}$)$.
\item[(4)] $\inj A =\add(I^{0} \oplus I^{1} \oplus \cdots \oplus I^{m+1})$.
\item[(5)] For each indecomposable injective $A$-module $I'$, we have $\pd I'\in \{ 0,1\}$ or $\pd I' = m+1$.
In particular, if $m \ge 1$, then we obtain that $I'\in \add I^{m+1}$ if and only if $\pd I' = m+1$.
\item[(6)] If $m\ge 1$, then $\id A =\gdom{Q}A$ and $\add Q=\add I$, where $I$ is a basic injective $A$-module with the property that $\add I$ consists of all injective $A$-modules with projective dimension at most one.
\end{itemize}
\end{lemma}
\begin{proof}
(1) This is a well-known result (see \cite[Lemma VI.5.5(b)]{ARS95}).

(2) Let $I'\in \add I^{m+1}$ be an indecomposable module with simple socle $S$.
Then we have $\Hom_{A}(S,I^{m+1})\neq 0$.
Suppose to the contrary $\pd I' \le m$. 
Applying $\Hom_{A}(-, A)$ to an exact sequence $0 \to S \to I' \to I'/S \to 0$, we have 
\begin{align}
\Ext_A^{m+1}(I', A) \to \Ext_{A}^{m+1}(S,A) \to \Ext_{A}^{m+2}(I'/S, A).\notag
\end{align}
Since $\id A =m+1$ holds, the right-hand side vanishes. 
On the other hand, the left-hand side vanishes since $\pd I' \le m$.
Thus we have $\Hom_{A}(S, I^{m+1}) \cong \Ext_{A}^{m+1}(S,A) = 0$, a contradiction.

Now we assume $\gdom{Q} A \ge m+1$. If $\id A =0$, then there is nothing to prove. 
Hence we may assume $m\ge 0$.

(3) By Theorem \ref{main-thm1}(1)$\Rightarrow$(2), the tilting module $\mathbb{T}^{m+1}\in\add (I^{m+1}\oplus I)$ is injective, and hence $\mathbb{T}^{m+1}=\kD A$.

(4) By (3), we obtain $\inj A =\add\kD A= \add \mathbb{T}^{m+1} = \add(I^{0}\oplus I^{1}\oplus \cdots \oplus I^{m+1})$, where the last equality follows from Proposition \ref{prop-basic-result-tilting}(2).

(5) First we show $\pd I^{m+1}=m+1$. Indeed, by repeated use of Lemma \ref{lem-ASS-A47}, we have $\pd I^{m+1} \le \max\{ \pd I^{m}, \pd \im f^{m}+1\} \le \pd A +m+1 = m+1$. 
Thus we obtain that $\pd I^{m+1}=m+1$ by (2). 
Let $I'$ be an indecomposable injective $A$-module. 
By (4), we obtain that either $I' \in \add (I^{0}\oplus \cdots \oplus I^{m})$ or $I' \in \add I^{m+1}$. If $I'\in \add (I^{0}\oplus \cdots \oplus I^{m})$, then $\pd I'\le 1$. On the other hand, if $I'\in\add I^{m+1}$, then $\pd I'=m+1$.

(6) Let $m\ge 1$.
Suppose to the contrary $\gdom{Q}A>m+1$. Then we obtain $\pd I^{m+1}\le 1$, a contradiction to (5). 
Therefore $\id A=\gdom{Q}A$. 
Next we show $\add Q=\add I$. 
Since $\gdom{Q}A=m+1$, we have $\add (I^{0} \oplus \cdots \oplus I^{m}) \subseteq\add Q$.
Hence the assertion holds by (2) and (4).
\end{proof}

Now we are ready to prove Propositions \ref{prop-property-alag} and \ref{prop-property-alaus}.

\begin{proof}[Proof of Proposition \ref{prop-property-alag}]
(1) Let $A$ be an almost $n$-Auslander--Gorenstein algebra.
By definition, $\id A \le n+1$. Thus we have only to show $\pd \kD A \le n+1$. 
Indeed, this follows from Lemma \ref{lem-ARS}(4) and (5). 
The converse is clear.

(2) (d)$\Rightarrow$(c)$\Rightarrow$(b)$\Rightarrow$(a) is clear. We show (a)$\Rightarrow$(d).
By Lemma \ref{lem-ARS}(3), we obtain $\pd \kD A\le 1$.
Hence we have the assertion.

(3) We show the ``only if'' part. 
By assumption, $\id A =m+1 \le n+1 \le \gdom{I}A$ for some integer $m\ge 1$.
Hence the assertion follows from Lemma \ref{lem-ARS}(6).
Next we show the ``if'' part. By assumption, $A$ is an almost $n$-Auslander--Gorenstein algebra.
Thus it is enough to show $\id A>1$. Suppose $\id A \le 1$. Then by (2), we have $\gdom{I}A=\infty$, a contradiction. The proof is complete.
\end{proof}

\begin{proof}[Proof of Proposition \ref{prop-property-alaus}]
(1) follows from Lemma \ref{lem-ARS}(1).
(2) and (3) follow from (1) and Proposition \ref{prop-property-alag}.
\end{proof}

The following theorem is a main result of this section, which is a refinement of \cite[Lemma 1.3]{HU96} and a generalization of \cite[Corollary 3.10]{PrSa}.

\begin{theorem}\label{thm-almost-AG}
Let $A$ be an artin algebra with injective dimension at least two and let $I$ be a basic injective $A$-module with the property that $\add I$ consists of all injective $A$-modules with projective dimension at most one.
Then the following statements are equivalent for each integer $n\ge 1$.
\begin{itemize}
\item[(1)] $A$ is an almost $n$-Auslander--Gorenstein algebra.
\item[(2)] For each $0\le d \le \min\{ \id A, n+1\}$, there exists a unique basic $d$-tilting $A$-module in $\Fac_{d}(I) \cap \Sub^{n+1-d}(I)$ which is an $(n+1-d)$-cotilting $A$-module.
\item[(3)] For some $0\le d \le \min\{ \id A, n+1\}$, there exists a unique basic $d$-tilting $A$-module in $\Fac_{d}(I) \cap \Sub^{n+1-d}(I)$ which is an $(n+1-d)$-cotilting $A$-module.
\item[(4)] For each $0\le d\le n+1$, there exists a unique basic $d$-tilting $A$-module in $\Fac_{d}(I) \cap \Sub^{n+1-d}(I)$ which is an $(n+1-d)$-cotilting $A$-module.
\item[(5)] For some $0\le d \le n+1$, there exists a unique basic $d$-tilting $A$-module in $\Fac_{d}(I) \cap \Sub^{n+1-d}(I)$ which is an $(n+1-d)$-cotilting $A$-module.
\end{itemize}
If in addition we assume $\gl A <\infty$, then the following statement is also equivalent.
\begin{itemize}
\item[(6)] $A$ is an almost $n$-Auslander algebra.
\end{itemize}
\end{theorem}

We give an analogue of the theorem for the case $\id A=1$.

\begin{proposition}\label{prop-id1} 
Let $A$ be an artin algebra with $\id A=1$ and let $I$ be as in Theorem \ref{thm-almost-AG}.
Then the following conditions are equivalent for each $n\ge 0$.
\begin{itemize}
\item[(1)] $A$ is an almost $n$-Auslander--Gorenstein algebra.
\item[(2)] There exists a unique basic $1$-tilting $A$-module in $\Fac_{1}(I) \cap \Sub^{n}(I)$ which is a $0$-cotilting $A$-module.
\end{itemize}
\end{proposition}
\begin{proof}
By Theorem \ref{main-thm1}, we obtain that $\gdom{I}A\ge n+1$ if and only if there exists a unique basic $1$-tilting $A$-module $T$ which is contained in $\Fac_{1}(I)\cap \Sub^{n}(I)$.
In this case, since $\id A=1$, it follows from Lemma \ref{lem-ARS}(3) that $T$ is isomorphic to $\kD A$, and hence $0$-cotilting.
This finishes the proof.
\end{proof} 

In the following, we show Theorem \ref{thm-almost-AG}. 
To show Theorem \ref{thm-almost-AG}(1)$\Rightarrow$(4), we need the following lemma.

\begin{lemma}\label{lem-cotilting}
Let $A$ be an almost $n$-Auslander--Gorenstein algebra with injective dimension at least two. 
Fix an integer $1\le d \le n+1$. Let $\mathbb{T}^{d}$ be as in Notation \ref{not-unique-tilting}. 
Then $\mathbb{T}^{d}$ has an injective coresolution
\begin{align}
0\to \mathbb{T}^{d} \to J^{d}\oplus I \to J^{d+1}\to \cdots \to J^{n-1} \to J^{n}\oplus I \to \kD A \to 0 \notag
\end{align}
with $J^{j}\in \add I$ for each $d\le j \le n$.
In particular, $\mathbb{T}^{d}$ is $(n+1-d)$-cotilting.
\end{lemma}
\begin{proof}
If $d=n+1$, then we have $\mathbb{T}^{n+1}=\kD A$ by Lemma \ref{lem-ARS}(3), and hence there is nothing to prove.
Assume $d\le n$. 
Let $X$ be a basic maximal direct summand of $A^{d}$ which contains no non-zero injective module as a direct summand.
By Proposition \ref{prop-mutation-resol}, we have an exact sequence
\begin{align}
0\to X \to I^{d}_{X} \to I^{d+1}_{X} \to \cdots \to I^{n}_{X} \to {A}^{n+1}_{X} \to 0. \notag
\end{align}
Then $A_{X}^{n+1}$ is injective since $\id A =n+1$.
Letting $J^{j}:=I_{X}^{j}$, we obtain the desired injective coresolution because $\mathbb{T}^{d}=X\oplus I$ and $\mathbb{T}^{n+1}=\kD A={A}^{n+1}_{X}\oplus I$ hold.
\end{proof}

Now we are ready to prove Theorem \ref{thm-almost-AG}.

\begin{proof}[Proof of Theorem \ref{thm-almost-AG}]
(4)$\Rightarrow$(2)$\Rightarrow$(3)$\Rightarrow$(5): This follows from $\{ 1,2, \ldots, \min\{\id A, n+1\}\}\subseteq \{ 1,2,\ldots,n+1\}$.

(1)$\Rightarrow$(4): By Proposition \ref{prop-property-alag}(3), we have $\id A =n+1=\gdom{I}A$.
Then $\mathbb{T}^{0}=A$ is an $(n+1)$-cotilting module which is contained in $\Sub^{n+1}(I)$.
Since $\id A =n+1$, it follows from Theorem \ref{main-thm1}(1)$\Rightarrow$(2) that there exists a unique basic $d$-tilting module $\mathbb{T}^{d}$ which is contained in $\Fac_{d}(I) \cap \Sub^{n+1-d}(I)$ for each $1 \le d \le n+1$.
By Lemma \ref{lem-cotilting}, $\mathbb{T}^{d}$ is an $(n+1-d)$-cotilting module.

(5)$\Rightarrow$(1): Due to our assumption, there exists a unique basic $d$-tilting module $T \in \Fac_d(I) \cap \Sub^{n+1-d}(I)$ for some integer $0 \le d \le  n+1 $.
If $d=0$, then $T\cong A$ is an $(n+1)$-cotilting module which is contained in $\Sub^{n+1}(I)$. Hence we have the assertion.
In the following, we assume $d\ge 1$.
By Lemma \ref{lem-uniquness-tilting}(3), $I\in \add T$.
Applying Proposition \ref{prop-construct-tilting}(2) to $T$ gives an exact sequence
\begin{align}\label{lex-471}
0 \to A \to I^{0}\to \cdots \to I^{d-1} \to T^{d} \to 0
\end{align}
with $0\neq I^{j} \in \add I$ for each $0 \le j \le d-1$ and $0\neq T^{d} \in \add T$.
By Proposition \ref{prop-clds}(1), $T^{d}\in \Sub^{n+1-d}(I)$. Thus there exists a minimal injective coresolution
\begin{align}\label{lex-472}
0\to T^{d}\to I^{d}\to I^{d+1}\to \cdots \xto{f^{n}} I^{n}
\end{align}
such that $I^{d},I^{d+1},\ldots, I^{n}\in \add I$.
If $I^{i}=0$ for some $d \le i \le n$, then $\id A \le 1$ by Lemma \ref{lem-ARS}(2).
This contradicts $\id A \ge 2$. 
Hence we obtain that $I^{i} \neq 0$ for each $d \le i \le n$. 
Since $\id T^{d} \le n+1-d$, we have $\cok f^{n} \in \inj A$.
Therefore we have $\id A =n+1=\gdom{I}A$ by \eqref{lex-471} and \eqref{lex-472}.

If $\gl A<\infty$, then (1)$\Leftrightarrow$(6) follows from Proposition \ref{prop-property-alaus}(1).
This finishes the proof.
\end{proof}

As an application, we can recover Pressland--Sauter's result.
\begin{corollary}[{\cite[Theorem 1]{PrSa}}]\label{PrSa-Theorem1}
Let $A$ be a non-self-injective artin algebra with $\dom A =n+1$ and let $Q$ be a maximal projective-injective direct summand of $A$.
Then the following statements are equivalent for each $n\ge 1$.
\begin{itemize}
\item[(1)] $A$ is a minimal $n$-Auslander--Gorenstein algebra.
\item[(2)] For each $0\le d\le n+1$, there exists a unique basic $d$-tilting $A$-module in $\Fac_{d}(Q) \cap \Sub^{n+1-d}(Q)$ which is an $(n+1-d)$-cotilting $A$-module.
\item[(3)] For some $0\le d\le n+1$, there exists a unique basic $d$-tilting $A$-module in $\Fac_{d}(Q) \cap \Sub^{n+1-d}(Q)$ which is an $(n+1-d)$-cotilting $A$-module.
\end{itemize}
\end{corollary}
\begin{proof}
Let $I$ be as in Theorem \ref{thm-almost-AG}.
If at least one of the statements (1), (2) and (3) is satisfied, then we have $2\le \id A \le \gdom{Q}A=\dom A$.
In this case, we obtain $\add Q=\add I$ by Lemma \ref{lem-ARS}(6).
Hence the assertion follows from Theorem \ref{thm-almost-AG}.
\end{proof}

We end this section by giving a generalization of Example \ref{ex-alag}.

\begin{lemma}\label{lem-tensor}
Let $A,B$ be finite dimensional algebras over a field $\mathbf{k}$.
Then the following statements hold.
\begin{itemize}
\item[(1)] For a projective $A$-module $P$ and a projective $B$-module $Q$, $P\otimes_{\mathbf{k}}Q$ is a projective $A\otimes_{\bf{k}}B$-module.
\item[(2)] Assume that $B$ is a self-injective algebra. 
For an injective $A$-module $I$, $I\otimes_{\mathbf{k}}B$ is an injective $A\otimes_{\mathbf{k}}B$-module.
\end{itemize}
\end{lemma}
\begin{proof}
(1) This is a well-known result (see for example \cite[IX.2.3]{CE56}).

(2) Since $B$ is self-injective, we have $I\otimes_{\mathbf{k}}B\cong \kD(\kD I)\otimes_{\mathbf{k}}\kD B \cong \kD(\kD I\otimes_{\mathbf{k}}B)$.
By (1), $\kD I\otimes_{\mathbf{k}}B$ is a projective $(A\otimes_{\mathbf{k}}B)^{\op}$-module. 
Thus we obtain the assertion.
\end{proof}

We construct almost $n$-Auslander--Gorenstein algebras from almost $n$-Auslander algebras by taking tensor product.

\begin{proposition}\label{prop-hmky}
Let $A,B$ be finite dimensional algebras over a field $\mathbf{k}$.
Assume that $B$ is a self-injective algebra.
If $A$ is an almost $n$-Auslander--Gorenstein algebra, then $A\otimes_{\mathbf{k}}B$ is also an almost $n$-Auslander--Gorenstein algebra which is not an almost $n$-Auslander algebra.
\end{proposition}
\begin{proof}
Since $A$ is an almost $n$-Auslander--Gorenstein algebra, we have $\id A=m+1$ for some integer $m \le n$.
Take a minimal injective coresolution
\begin{align}
0 \to A \to I^{0} \to I^{1} \to \cdots \to I^{m} \to I^{m+1} \to 0. \notag
\end{align}
Applying $-\otimes_{\mathbf{k}}B$ to the minimal injective coresolution above,
we have an injective coresolution 
\begin{align}
0 \to A\otimes_{\mathbf{k}}B \to I^{0}\otimes_{\mathbf{k}}B \to I^{1}\otimes_{\mathbf{k}}B \to \cdots \to I^{m}\otimes_{\mathbf{k}}B \to I^{m+1}\otimes_{\mathbf{k}}B \to 0 \notag
\end{align}
of $A\otimes_{\mathbf{k}}B$ by Lemma \ref{lem-tensor}(2).
We show that if $\pd I^{i}\le 1$, then $\pd (I^{i}\otimes_{\mathbf{k}}B) \le 1$.
Applying $-\otimes_{\mathbf{k}}B$ to a minimal projective resolution $0\to P^{i}_{1}\to P^{i}_{0}\to I^{i}\to 0$ induces a projective resolution of $I^{i}\otimes_{\mathbf{k}}B$
\begin{align}
0\to P^{i}_{1}\otimes_{\mathbf{k}}B \to P^{i}_{0}\otimes_{\mathbf{k}}B \to I^{i}\otimes_{\mathbf{k}}B \to 0. \notag
\end{align}
Hence $A\otimes_{\mathbf{k}}B$ is an almost $n$-Auslander--Gorenstein algebra.

Next we show $\gl (A\otimes_{\mathbf{k}}B)=\infty$.
Since $\gl B =\infty$, there exists a $B$-module $M$ such that $\pd M =\infty$. 
Hence we have $\pd (A\otimes_{\mathbf{k}}M) =\infty$. 
This finishes the proof.
\end{proof}

\begin{remark}
In Proposition \ref{prop-hmky}, we can replace a self-injective algebra with a graded Frobenius algebra whose zeroth part is self-injective by using results in \cite{MY}.
\end{remark}

\section{The endomorphism algebras of the $d$-tilting modules}
In this section, we study the endomorphism algebra $B^{d}:=\End_{A}(\mathbb{T}^{d})$ of the $d$-tilting module $\mathbb{T}^{d}$ over an almost $n$-Auslander algebra $A$.
Throughout this section, $I$ is a basic injective $A$-module with the property that $\add I$ consists of all injective $A$-modules with projective dimension at most one and $A$ is an almost $n$-Auslander algebra.
If $A$ is an almost $0$-Auslander algebra, or equivalently, a hereditary algebra, then $\mathbb{T}^{1}\cong \kD A$ and $B^{1}\cong A$.
In the following, we always assume $\gl A > 1$, that is, $\gl A = n+1 = \gdom{I}A$.
Let $0\to A \xto{f^{0}}I^{0}\xto{f^{1}}I^{1}\to \cdots \xto{f^{n+1}} I^{n+1}\to 0$ be a minimal injective coresolution and $\mathbb{T}^{d}$ the basic module of $\cok f^{d-1} \oplus I$. 
Then $\mathbb{T}^{d}$ is a $d$-tilting $A$-module for all $1\le d\le n+1$.

We start this section with observing the projective dimension of $\Hom_{A}(\mathbb{T}^{d},I')$ for an injective module $I'$. 

\begin{lemma} \label{lem-pd-b} 
Let $I'$ be an indecomposable injective $A$-module.
Then we have 
\begin{align}
\pd \Hom_{A} (\mathbb{T}^d, I') \le 
\begin{cases}
0 & (I'\in \add I),\\
n+1-d & (I'\notin \add I).
\end{cases}\notag
\end{align}
\end{lemma}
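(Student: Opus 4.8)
The plan is to compute $\Hom_A(\mathbb{T}^d, I')$ using the explicit description of $\mathbb{T}^d$ as the basic module of $\cok f^{d-1} \oplus I$ together with the known structure of $A$ as an almost $n$-Auslander algebra. Write $A^d := \cok f^{d-1}$, so $\mathbb{T}^d = A^d \oplus I$ up to multiplicities. If $I' \in \add I$, then $\Hom_A(\mathbb{T}^d, I')$ is a projective $B^d$-module, since $I'$ is a direct summand of $\mathbb{T}^d$; hence its projective dimension is $0$. This disposes of the first case immediately. For the second case $I' \notin \add I$, by Lemma~\ref{lem-ARS-5.5-3}(4) we have $\pd I' = n+1$, and $I'$ is a direct summand of $I^{n+1}$.

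The main step is the case $I' \notin \add I$. Here I would use Proposition~\ref{prop-mutation-resol} (as already exploited in Lemma~\ref{cor-cotilting}): letting $X$ be the direct sum of all pairwise non-isomorphic indecomposable non-injective summands of $A^d$, one has an exact sequence
\begin{align}
0 \to X \to I^d_X \to I^{d+1}_X \to \cdots \to I^n_X \to A^{n+1}_X \to 0 \notag
\end{align}
with $I^j_X \in \add I^j$ and $A^{n+1}_X = \kD A$ up to the injective summand $I$; equivalently $\mathbb{T}^d$ sits in an injective coresolution of length $n+1-d$ ending in $\kD A$ whose middle terms lie in $\add I$, as in Lemma~\ref{cor-cotilting}. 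Splicing, $\mathbb{T}^d$ admits a coresolution
\begin{align}
0 \to \mathbb{T}^d \to J^d \to J^{d+1} \to \cdots \to J^{n+1} \to 0 \notag
\end{align}
with each $J^j \in \add I$ for $d \le j \le n$ and $J^{n+1} \in \add \kD A$. Applying $\Hom_A(-, I')$ and noting that $\Ext^k_A(J^j, I') = 0$ for all $k \ge 1$ (injectives have no higher self-Ext), this yields a projective resolution of $\Hom_A(\mathbb{T}^d, I')$ over $B^d$ of length at most $n+1-d$: the terms $\Hom_A(J^j, I')$ for $d \le j \le n$ are projective $B^d$-modules because $J^j \in \add I \subseteq \add \mathbb{T}^d$, and the last term $\Hom_A(\kD A, I')$ is likewise projective since $\kD A$ is a tilting module with $\add(\kD A) \supseteq \add I$... more carefully, one must check $\Hom_A(J^{n+1}, I')$ is $B^d$-projective, which follows once $J^{n+1} \in \add \mathbb{T}^d$; but $\kD A = \mathbb{T}^{n+1}$ need not be in $\add \mathbb{T}^d$, so instead I would observe $I' \in \add \kD A = \add J^{n+1}$ is irrelevant and rather resolve one more step, or argue directly that $\Hom_A(\mathbb{T}^d, -)$ sends the coresolution into a complex of projectives after truncating appropriately.

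The hard part will be controlling the final term of the resolution: $\kD A$ lies in $\add \mathbb{T}^{n+1}$ but generally not in $\add \mathbb{T}^d$, so $\Hom_A(\kD A, I')$ is not obviously $B^d$-projective. I expect the cleanest fix is to use the tilting equivalence $\Hom_A(\mathbb{T}^d, -) \colon \mathsf{Fac}_d(\mathbb{T}^d)\text{-type subcategory} \to \mod B^d$ and the fact that $\pd_A I' = n+1$ with $A^d$ obtained from $A$ by $d$ mutation steps, so that the $\mathbb{T}^d$-coresolution of $\kD A$ has length exactly $n+1-d$; then $\Ext^{>0}_{B^d}(\Hom_A(\mathbb{T}^d, I'), -)$ is computed by a complex concentrated in degrees $0$ through $n+1-d$ via Proposition~\ref{prop-sub}(2) applied to the $\add I$-approximation structure, giving the bound $\pd_{B^d}\Hom_A(\mathbb{T}^d, I') \le n+1-d$. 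Routine diagram chasing and bookkeeping of which terms land in $\add \mathbb{T}^d$ will fill in the rest.
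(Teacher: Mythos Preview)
Your first case is fine, and you correctly identify via Lemma~\ref{lem-ARS-5.5-3}(4) that $I' \in \add I^{n+1}$ when $I' \notin \add I$. The second case, however, has a genuine gap: applying the contravariant functor $\Hom_A(-, I')$ to a coresolution $0 \to \mathbb{T}^d \to J^d \to \cdots \to J^{n+1} \to 0$ does \emph{not} produce a complex of $B^d$-modules at all. The $B^d$-module structure on $\Hom_A(\mathbb{T}^d, I')$ arises from $B^d = \End_A(\mathbb{T}^d)$ acting on the first argument, and there is no natural $B^d$-action on $\Hom_A(J^j, I')$ once the first argument is replaced by another module, even one in $\add \mathbb{T}^d$. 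So the assertion that ``the terms $\Hom_A(J^j, I')$ \ldots\ are projective $B^d$-modules'' is not meaningful as stated. You sensed something was wrong at the final term $J^{n+1}$, but the problem is already present at every intermediate term, and no amount of bookkeeping or appeal to the tilting equivalence repairs it.

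The paper's argument works in the opposite direction: resolve $I'$ (rather than coresolve $\mathbb{T}^d$) and apply the \emph{covariant} functor $\Hom_A(\mathbb{T}^d, -)$. Since $I' \in \add I^{n+1}$, Proposition~\ref{prop-mutation-resol} (used via part (3), reading the bijection backwards) supplies an exact sequence
\[
0 \to X \to I_{n-d} \to \cdots \to I_{0} \to I' \to 0
\]
with each $I_i \in \add I \subseteq \add \mathbb{T}^d$ and $X \in \add \mathbb{T}^d$. Applying $\Hom_A(\mathbb{T}^d, -)$, which is exact on this sequence by the dual of Proposition~\ref{prop-sub}(2), yields a genuine projective resolution of the $B^d$-module $\Hom_A(\mathbb{T}^d, I')$ of length $n+1-d$. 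No further patching is required.
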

\begin{proof}
If $I'\in \add I$, then $\Hom_{A} (\mathbb{T}^{d},I')$ is a projective $B^{d}$-module, and hence the assertion holds.
In the following, we assume $I'\notin \add I$.
Then we have $I'\in \add I^{n+1}$ by Lemma \ref{lem-ARS}(5).
By Proposition \ref{prop-mutation-resol}, we have an exact sequence 
\begin{align}
0 \to X \to I_{n-d} \to \cdots \to I_{0} \to I' \to 0 \notag
\end{align}
with $I_{i} \in \add I$ and $X \in \add \mathbb{T}^{d}$. 
Applying $\Hom_A(\mathbb{T}^{d}, -)$, we have a projective resolution of $\Hom_{A}(\mathbb{T}^{d}, I')$
\begin{align}
0 \to \Hom_{A}(\mathbb{T}^{d}, X) \to \Hom_{A}(\mathbb{T}^{d}, I_{n-d}) \to \cdots \to \Hom_{A}(\mathbb{T}^{d}, I_{0}) \to \Hom_{A}(\mathbb{T}^{d}, I') \to 0. \notag
\end{align}
Thus the proof is complete.
\end{proof}

Let $\nu$ be the Nakayama functor of $\mod A$.
By Lemma \ref{lem-pd-b}, we give an upper bound for global dimension of $B^{d}$.

\begin{proposition}\label{prop-gl-b}
Fix an integer $n \ge 1$.
Assume that $A$ is an almost $n$-Auslander algebra. 
Let 
\begin{align}\label{seq-proj-resol-t}
0 \to P_{d}^{\mathbb{T}^{d}} \to \cdots \to P_{1}^{\mathbb{T}^{d}} \to P_{0}^{\mathbb{T}^{d}} \to \mathbb{T}^{d} \to 0 
\end{align}
be a minimal projective resolution of $\mathbb{T}^{d}$. 
Then the following statements hold.
\begin{itemize}
\item[(1)] $\gl B^{d}\le \gl A$.
\item[(2)] If $\nu P_{1}^{\mathbb{T}^{1}} \in \add I$, then $\gl B^{1} = n$.
\end{itemize}
\end{proposition}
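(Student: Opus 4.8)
The plan is to bound $\gl B^{d}$ from above by applying $\Hom_{A}(\mathbb{T}^{d},-)$ to a minimal injective coresolution of an arbitrary $A$-module and reading off a projective resolution over $B^{d}$ whose length is controlled by $\gl A$. Concretely, for part (1), I would start from the fact that $\mathbb{T}^{d}$ is a $d$-tilting module, so the functor $\Hom_{A}(\mathbb{T}^{d},-)\colon \mod A\to \mod B^{d}$ together with its derived version gives a derived equivalence; in particular $\gl B^{d}<\infty$. To get the sharp bound, take any $B^{d}$-module $N$ and write $N=\Hom_{A}(\mathbb{T}^{d},M)$ up to the appropriate $\Ext$-vanishing (using that $\mathbb{T}^{d}$ is a tilting module, every $B^{d}$-module is a cokernel of a map between modules in $\add \Hom_{A}(\mathbb{T}^{d},I)$-type data coming from an injective coresolution of some $M$). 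Since $\gl A=n+1$, the module $M$ has an injective coresolution $0\to M\to I'^{0}\to\cdots\to I'^{n+1}\to 0$ with each $I'^{j}\in\inj A$, and applying $\Hom_{A}(\mathbb{T}^{d},-)$ — which is exact on injectives and kills higher $\Ext$ into $\mathbb{T}^{d}$ by the dual of Proposition \ref{prop-sub}(2) — yields an exact sequence $0\to \Hom_{A}(\mathbb{T}^{d},M)\to \Hom_{A}(\mathbb{T}^{d},I'^{0})\to\cdots\to \Hom_{A}(\mathbb{T}^{d},I'^{n+1})\to 0$. Now splice in the projective resolutions of each $\Hom_{A}(\mathbb{T}^{d},I'^{j})$ provided by Lemma \ref{lem-pd-b}: a term $I'^{j}\in\add I$ contributes a projective $B^{d}$-module, and a term $I'^{j}\notin\add I$ contributes a resolution of length $\le n+1-d$. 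A careful bookkeeping of how these resolutions stack along the coresolution (the $j$-th term sits in cohomological degree $j$, and can only be non-projective if it involves a non-$I$ summand, which by Lemma \ref{lem-ARS-5.5-3}(4) forces $j=n+1$) shows the total projective dimension of $N$ is at most $\gl A$.

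For part (2), the hypothesis $\nu P_{1}^{\mathbb{T}^{1}}\in\add I$ is what pins the global dimension down to exactly $n$ rather than merely $\le n+1$. I would argue the lower bound $\gl B^{1}\ge n$ by exhibiting a $B^{1}$-module of projective dimension exactly $n$ — the natural candidate is $\Hom_{A}(\mathbb{T}^{1},I')$ for an indecomposable injective $I'\notin\add I$ (equivalently $I'\in\add I^{n+1}$, using $\id A=n+1$), whose projective resolution constructed in Lemma \ref{lem-pd-b} has length $\le n$; one then checks minimality, i.e. that the length is not smaller, using that the sequence $0\to X\to I_{n-1}\to\cdots\to I_{0}\to I'\to 0$ from Proposition \ref{prop-mutation-resol} is a minimal one (the approximations are minimal) so the induced projective resolution over $B^{1}$ is minimal and has length exactly $n$. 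For the upper bound $\gl B^{1}\le n$, I would revisit the argument of part (1): the only way a $B^{1}$-module could have projective dimension $n+1$ is if the degree-$(n+1)$ term $\Hom_{A}(\mathbb{T}^{1},I'^{n+1})$ fails to be projective and cannot be absorbed; Lemma \ref{lem-pd-b} gives $\pd\Hom_{A}(\mathbb{T}^{1},I'^{n+1})\le n$, so actually the worst contribution sits in degree $(n+1)+0$ only when $I'^{n+1}\in\add I$, i.e. is already projective. So one has to trace exactly where the extra "$+1$" could come from and rule it out. This is where the hypothesis $\nu P_{1}^{\mathbb{T}^{1}}\in\add I$ enters: it controls the first syzygy of $\mathbb{T}^{1}$, and hence the structure of $\Hom_{A}(\mathbb{T}^{1},-)$ applied to the injective coresolution of $\mathbb{T}^{1}$ itself, ensuring that the relevant connecting term lands in $\add I$ and contributes nothing beyond degree $n$.

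The main obstacle I anticipate is the bookkeeping in the "splicing" step: when you replace each term $\Hom_{A}(\mathbb{T}^{d},I'^{j})$ of a length-$(n+1)$ exact sequence by its own projective resolution, the naive estimate gives $\pd \le (n+1)+\max_j(n+1-d)$, which is far too big. The real content is that only the \emph{last} term $I'^{n+1}$ can be non-$I$ (by Lemma \ref{lem-ARS-5.5-3}(4), every indecomposable injective is either of projective dimension $\le 1$ — hence in $\add I$ — or of projective dimension exactly $n+1$, and the latter only appear as summands of $I^{n+1}$), so all intermediate terms are projective $B^{d}$-modules and the resolution telescopes down to a single non-trivial tail of length $\le n+1-d$ attached at the end, giving total length $\le (n+1)$. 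Making this telescoping precise — that a complex of projectives with a length-$\ell$ resolution glued on at the top end has projective dimension $\le \ell + (\text{position})$ but here position-plus-$\ell$ works out to exactly $\gl A$ — is the technical heart, and for (2) the additional subtlety is extracting the exact value $n$ from the minimality of all the approximations involved.
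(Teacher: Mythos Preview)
Your approach to (1) has a genuine gap. You propose to bound $\pd_{B^{d}}N$ for \emph{every} $B^{d}$-module $N$ by writing $N=\Hom_{A}(\mathbb{T}^{d},M)$ and pushing an injective coresolution of $M$ through $\Hom_{A}(\mathbb{T}^{d},-)$. But not every $B^{d}$-module arises this way (the essential image of $\Hom_{A}(\mathbb{T}^{d},-)$ is only one piece of the tilting torsion theory), and even when $N$ does, applying $\Hom_{A}(\mathbb{T}^{d},-)$ to $0\to M\to I'^{0}\to\cdots\to I'^{n+1}\to 0$ produces a complex whose cohomology in degree $j$ is $\Ext_{A}^{j}(\mathbb{T}^{d},M)$, not an exact sequence. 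More fatally, your ``real content'' claim that only $I'^{n+1}$ can lie outside $\add I$ is false for arbitrary $M$: Lemma~\ref{lem-ARS-5.5-3}(4) classifies indecomposable injectives, but says nothing about where they occur in the coresolution of a general module. So the telescoping you describe simply does not happen.

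The paper sidesteps all of this by reducing to a single module. Since $\gl B^{d}<\infty$ (from Happel's inequality $|\gl A-\gl B^{d}|\le \pd\mathbb{T}^{d}$), one has $\gl B^{d}=\pd_{B^{d}}\kD B^{d}$, so it suffices to resolve $\kD B^{d}$. Applying $\Hom_{A}(-,\mathbb{T}^{d})$ to the projective resolution \eqref{seq-proj-resol-t} of $\mathbb{T}^{d}$ and invoking Serre duality $\Hom_{A}(P_{i}^{\mathbb{T}^{d}},\mathbb{T}^{d})\cong \kD\Hom_{A}(\mathbb{T}^{d},\nu P_{i}^{\mathbb{T}^{d}})$ yields, after dualising, a length-$d$ resolution
\[
0\to \Hom_{A}(\mathbb{T}^{d},\nu P_{d}^{\mathbb{T}^{d}})\to\cdots\to \Hom_{A}(\mathbb{T}^{d},\nu P_{0}^{\mathbb{T}^{d}})\to \kD B^{d}\to 0,
\]
and Lemma~\ref{lem-pd-b} bounds each term's projective dimension by $n+1-d$, giving $\pd\kD B^{d}\le\max_{i}\{(n+1-d)+i\}=n+1$. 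For (2), the hypothesis $\nu P_{1}^{\mathbb{T}^{1}}\in\add I$ makes the $i=1$ term projective, so the max drops to $n$; the lower bound $\gl B^{1}\ge n$ is immediate from Happel's inequality again, with no need to exhibit a specific module of projective dimension $n$ as you attempt.
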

\begin{proof}
(1) Due to \cite[Proposition III.3.4]{H88}, we have
\begin{align}\label{eq-Happel3.4}
|\gl A - \gl B^{d}| \le \pd \mathbb{T}^{d}. 
\end{align}
Hence we obtain $\gl B^{d}<\infty$. 
Thus it is enough to show $\pd \kD B^{d}\le n+1$.
Applying $\Hom_{A}(-,\mathbb{T}^{d})$ to the exact sequence \eqref{seq-proj-resol-t} induces an exact sequence
\begin{align}
0 \to \Hom_{A}(\mathbb{T}^{d}, \mathbb{T}^{d}) \to \Hom_{A}(P_{0}^{\mathbb{T}^{d}}, \mathbb{T}^{d}) \to \cdots \to \Hom_{A}(P_{d}^{\mathbb{T}^{d}}, \mathbb{T}^{d}) \to 0. \notag
\end{align}
By Serre duality, we have $\Hom_{A}(P_{i}^{\mathbb{T}^{d}}, \mathbb{T}^{d}) \cong \kD \Hom_{A}(\mathbb{T}^{d}, \nu P_{i}^{\mathbb{T}^{d}})$. 
Applying $\kD$, we have an exact sequence
\begin{align}
0 \to \Hom_{A}(\mathbb{T}^{d}, \nu P_{d}^{\mathbb{T}^{d}}) \to \cdots \to \Hom_{A}(\mathbb{T}^{d}, \nu P_{0}^{\mathbb{T}^{d}}) \to \kD B^{d} \to 0. \notag
\end{align}
By Lemma \ref{lem-ASS-A47}, we have
\begin{align}\label{seq-inequality}
\pd \kD B^{d} \le \max\{ \pd \Hom_{A}(\mathbb{T}^{d}, \nu P_{i}^{\mathbb{T}^{d}})+i \mid i\in \{ 0,1,\ldots,d \} \}\le n+1,
\end{align}
where the last inequality follows from Lemma \ref{lem-pd-b}.

(2) By \eqref{seq-inequality}, we have 
\begin{align}
\pd \kD B^{1} 
&\le \max\{ \pd \Hom_{A}(\mathbb{T}^{1}, \nu P_{0}^{\mathbb{T}^{1}}), \pd \Hom_{A}(\mathbb{T}^{1}, \nu P_{1}^{\mathbb{T}^{1}})+1 \} \le n, \notag
\end{align}
where the last inequality follows from Lemma \ref{lem-pd-b} since $\nu P_{1}^{\mathbb{T}^{1}} \in \add I$.
Thus $\gl B^{1}\le n$.
On the other hand, by \eqref{eq-Happel3.4}, we have $\gl B^{1}\ge n$. This finishes the proof.
\end{proof}

In the rest of this section, we give a sufficient condition for $B^{\op}$ to be an almost $n$-Auslander algebra again, where $B: = B^{1}= \End_{A}(\mathbb{T}^1)$.
We define $\mathcal{D}$ to be the full subcategory of $\mod A$ consisting of $A$-modules $X$ with $\id \Hom_{A}(\mathbb{T}^1, X) \le 1$.
Note that for each $X\in \Fac_{1}(I)$, we have $\id \Hom_{A}(\mathbb{T}^1, X) \le 1+ \id X$ by \cite[VI.7.20]{ASS06}.
Thus we have $I\in\mathcal{D}$.
The following theorem is a main result of this section.

\begin{theorem} \label{thm-domdim-b}
Fix an integer $n \ge 1$. 
Let $A$ be an almost $n$-Auslander algebra and $B:=\End_{A}(\mathbb{T}^1)$.
Let $I^{\circ}$ be a basic injective $B^{\op}$-module with the property that $\add I^{\circ}$ consists of all injective $B^{\op}$-modules with projective dimension at most one.
Then we have $\gdom{I^{\circ}}B^{\op} \ge n$.  
Moreover if $\mathbb{T}^{1} \in \mathcal{D}$, then $B^{\op}$ is an almost $n$-Auslander algebra.
\end{theorem}

To prove Theorem \ref{thm-domdim-b}, we need the following lemma.

\begin{lemma} \label{lem-clas-domdim-b}
Keep the notation in Theorem \ref{thm-domdim-b}. 
Let $P \in \proj A$. 
Then the following statements hold.
\begin{itemize}
\item[(1)] If $\nu P \in \add I$, then $\Hom_{A}(P, \mathbb{T}^{1})$ is an injective $B^{\op}$-module with projective dimension at most one. 
\item[(2)] $\Hom_{A}(P, \mathbb{T}^{1}) \in \Sub^{n}(I^{\circ})$ holds.
\item[(3)] If $\mathbb{T}^{1}\in \mathcal{D}$, then we have $\Hom_{A}(P, \mathbb{T}^1) \in \Sub^{n+1}(I^{\circ})$.
\end{itemize}
\end{lemma}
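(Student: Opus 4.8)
The plan is to route everything through the Serre-duality isomorphism
\[
\Hom_{A}(P,\mathbb{T}^1)\;\cong\;\kD\Hom_{A}(\mathbb{T}^1,\nu P)
\]
of right $B^{\op}$-modules, valid for every $P\in\proj A$ (the identity already used in the proof of Proposition \ref{prop-gl-b}), and then to exploit the structure of the modules $\Hom_{A}(\mathbb{T}^1,-)$ coming from Proposition \ref{prop-mutation-resol}. Since $\add I^{\circ}$, $\Sub^{n}(I^{\circ})$ and $\Sub^{n+1}(I^{\circ})$ are all closed under finite direct sums and $\Hom_{A}(-,\mathbb{T}^1)$ preserves direct sums, it suffices to treat an indecomposable $P$; a general $P$ splits as $P'\oplus P''$ with $\nu P'\in\add I$ and with no indecomposable summand of $\nu P''$ lying in $\add I$.

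For (1), let $\nu P\in\add I$. Then $\nu P\in\add I\subseteq\add(A^{1}\oplus I)=\add\mathbb{T}^1$, so $\Hom_{A}(\mathbb{T}^1,\nu P)$ is a projective right $B$-module, and $\nu P\in\add I\subset\mathcal{D}$ gives $\id\Hom_{A}(\mathbb{T}^1,\nu P)\le 1$. Since $\kD$ is a duality exchanging projectives with injectives and turning injective dimension into projective dimension, the displayed isomorphism exhibits $\Hom_{A}(P,\mathbb{T}^1)$ as an injective $B^{\op}$-module of projective dimension at most one.

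For (2), let $\nu P\notin\add I$ with $P$ indecomposable, so $\nu P$ is an indecomposable injective module of projective dimension $n+1$. As in the proof of Lemma \ref{lem-pd-b}, Lemma \ref{lem-ARS-5.5-3}(4) and Proposition \ref{prop-mutation-resol} provide an exact sequence
\[
0\to X\to I_{n-1}\to\cdots\to I_{0}\to\nu P\to 0
\]
with $I_{j}\in\add I$ and $X\in\add\mathbb{T}^1$. Applying $\Hom_{A}(\mathbb{T}^1,-)$ --- which is exact here by the dual of Proposition \ref{prop-sub}(2), since $\pd\mathbb{T}^1\le 1$ and the middle terms are injective --- yields a projective resolution of $\Hom_{A}(\mathbb{T}^1,\nu P)$ over $B$, and applying $\kD$ gives an exact sequence
\[
0\to\Hom_{A}(P,\mathbb{T}^1)\to\kD\Hom_{A}(\mathbb{T}^1,I_{0})\to\cdots\to\kD\Hom_{A}(\mathbb{T}^1,I_{n-1})\to\kD\Hom_{A}(\mathbb{T}^1,X)\to 0.
\]
For each $j$, the module $\Hom_{A}(\mathbb{T}^1,I_{j})$ is projective over $B$ (as $I_{j}\in\add\mathbb{T}^1$) with injective dimension at most one over $B$ (as $I_{j}\in\add I\subset\mathcal{D}$), so $\kD\Hom_{A}(\mathbb{T}^1,I_{j})$ is an injective $B^{\op}$-module of projective dimension at most one and therefore lies in $\add I^{\circ}$. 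Truncating after the term $\kD\Hom_{A}(\mathbb{T}^1,I_{n-1})$ gives $\Hom_{A}(P,\mathbb{T}^1)\in\Sub^{n}(I^{\circ})$. If in addition $\add\mathbb{T}^1\subset\mathcal{D}$, the same two observations apply to $X\in\add\mathbb{T}^1$, so $\kD\Hom_{A}(\mathbb{T}^1,X)\in\add I^{\circ}$ as well, and the whole displayed sequence shows $\Hom_{A}(P,\mathbb{T}^1)\in\Sub^{n+1}(I^{\circ})$.

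The routine points are the bimodule bookkeeping making the Serre isomorphism one of right $B^{\op}$-modules, and the exactness of $\Hom_{A}(\mathbb{T}^1,-)$ on the resolution above; the only real input is the already-established inclusion $\add I\subset\mathcal{D}$. I do not expect a serious obstacle --- the argument is a direct-sum-closed consequence of Proposition \ref{prop-mutation-resol} and the exact duality $\kD$. The one delicate feature is controlling exactly which injective $B^{\op}$-modules of projective dimension at most one arise, which is precisely why the extra hypothesis $\add\mathbb{T}^1\subset\mathcal{D}$ is needed to strengthen $\Sub^{n}(I^{\circ})$ to $\Sub^{n+1}(I^{\circ})$.
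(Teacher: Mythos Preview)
Your proof is correct and for part~(2) matches the paper's argument essentially verbatim: both obtain the resolution of $\nu P$ from Proposition~\ref{prop-mutation-resol} via Lemma~\ref{lem-ARS-5.5-3}(4), apply $\Hom_A(\mathbb{T}^1,-)$ (exactness justified by the dual of Proposition~\ref{prop-sub}(2), exactly as in Lemma~\ref{lem-pd-b}), and then dualise. The only cosmetic difference is that the paper rewrites each $\kD\Hom_A(\mathbb{T}^1,I_j)$ as $\Hom_A(P_j,\mathbb{T}^1)$ using Serre duality and then invokes part~(1), whereas you keep the dualised form and argue directly via $\add I\subset\mathcal{D}$.

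For part~(1) your route does diverge from the paper. Both arguments establish injectivity the same way, but for the bound $\pd_{B^{\op}}\Hom_A(P,\mathbb{T}^1)\le 1$ the paper takes a tilting coresolution $0\to P\to T^0\to T^1\to 0$ with $T^i\in\add\mathbb{T}^1$ and applies $\Hom_A(-,\mathbb{T}^1)$ to obtain a length-one projective resolution directly. You instead invoke the inclusion $\add I\subset\mathcal{D}$ (established just before Theorem~\ref{thm-domdim-b} via \cite[VI.7.20]{ASS06}) to get $\id_B\Hom_A(\mathbb{T}^1,\nu P)\le 1$, and then dualise. Your approach is more uniform with the rest of the argument and avoids introducing the tilting coresolution, at the cost of relying on the external input $\add I\subset\mathcal{D}$; the paper's approach is self-contained within the tilting framework. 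Both are clean and short.
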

\begin{proof}
(1) Since $\nu P \in \add I$, we obtain that $\Hom_{A}(P, \mathbb{T}^1) \cong \kD \Hom_{A}(\mathbb{T}^1, \nu P)$ is injective.
Since $\mathbb{T}^{1}$ is tilting, there exists an exact sequence $0 \to P \to T^0 \to T^1 \to 0$ with $T^{0}, T^{1} \in \add \mathbb{T}^1$. 
Applying $\Hom_{A}(-, \mathbb{T}^{1})$ to this exact sequence, we have an exact sequence
\begin{align}
0 \to \Hom_{A}(T^{1}, \mathbb{T}^{1}) \to \Hom_{A}(T^{0}, \mathbb{T}^{1}) \to \Hom_{A}(P, \mathbb{T}^{1}) \to 0. \notag
\end{align}
Therefore the assertion follows from $\Hom_{A}(T^{1}, \mathbb{T}^{1}), \Hom_{A}(T^{0}, \mathbb{T}^{1}) \in \proj B^{\op}$. 

(2) By Proposition \ref{prop-mutation-resol}, there exists an exact sequence 
\begin{align}
0 \to X \to I_{n-1} \to \cdots \to I_{0} \to \nu P \to 0 \notag
\end{align}
such that $I_{i} \in \add I$ and $X \in \add \mathbb{T}^{1}$.
Applying $\kD\Hom_{A}(\mathbb{T}^1, -)$ to this exact sequence and using Serre duality, we have an exact sequence
\begin{equation}\label{dhom-resol}
0 \to \Hom_A(P, \mathbb{T}^1) \to \Hom_A(P_0, \mathbb{T}^1) \to \cdots \to \Hom_A(P_{n-1}, \mathbb{T}^1) \to \kD \Hom_A(\mathbb{T}^1, X) \to 0, 
\end{equation}
where $I_{i}=\nu P_{i}$ for each $0\le i\le n-1$. 
By (1), $\Hom_{A}(P_{i}, \mathbb{T}^1)$ is injective with projective dimension at most one.
Thus $\Hom_{A}(P, \mathbb{T}^{1}) \in \Sub^{n}(I^{\circ})$. 

(3) Assume $\mathbb{T}^{1}\in \mathcal{D}$.
Since $X\in \add \mathbb{T}^{1}$, we obtain that $\kD\Hom_{A}(\mathbb{T}^{1},X)$ is injective with projective dimension at most one.
Hence \eqref{dhom-resol} implies that $\Hom_{A}(P, \mathbb{T}^{1}) \in \Sub^{n+1}(I^{\circ})$. 
The proof is complete.
\end{proof}

Now we are ready to show Theorem \ref{thm-domdim-b}.
\begin{proof}[Proof of Theorem \ref{thm-domdim-b}]
Note that $\pd \mathbb{T}^{1}= 1$.
Applying $\Hom_{A}(-,\mathbb{T}^{1})$ to a minimal projective resolution $0\to P_{1} \to P_{0} \to \mathbb{T}^{1} \to 0$, we have an exact sequence
\begin{align}
0\to \Hom_{A}(\mathbb{T}^{1},\mathbb{T}^{1}) \to \Hom_{A}(P_{0},\mathbb{T}^{1}) \to \Hom_{A}(P_{1},\mathbb{T}^{1})\to 0. \notag
\end{align}
By Lemma \ref{lem-clas-domdim-b}(2), we have $\Hom_{A}(P_{0},\mathbb{T}^{1}),\Hom_{A}(P_{1},\mathbb{T}^{1})\in \Sub^{n}(I^{\circ})$.
This exact sequence gives $\Hom_{A}(\mathbb{T}^{1},\mathbb{T}^{1})\in \Sub^{n}(I^{\circ})$ by \cite[Lemma 3.1(1)]{CX16}.
Hence we have $\gdom{I^{\circ}}B^{\op} \ge n$.

Now we assume $\mathbb{T}^{1}\in \mathcal{D}$. 
Then $\Hom_{A}(P_{0},\mathbb{T}^{1}),\Hom_{A}(P_{1},\mathbb{T}^{1})\in \Sub^{n+1}(I^{\circ})$ by Lemma \ref{lem-clas-domdim-b}(3).
Due to \cite[Lemma 3.1(1)]{CX16}, we have $\gdom{I^{\circ}}B^{\op} \ge n+1$.
Hence the assertion follows from Proposition \ref{prop-gl-b}(1).
\end{proof}

\section{Almost Auslander algebras and strongly quasi-hereditary algebras}
In this section, we study a relationship between almost $1$-Auslander algebras and strongly quasi-hereditary algebras. 
We start with recalling the definition of strongly quasi-hereditary algebras (see \cite{R10}, \cite{DR92} and \cite{T20} for details).
Let $A$ be an artin algebra.
We fix a complete set $\{ S(\lambda) \mid \lambda \in \Lambda \}$ of representatives of isomorphism classes of simple $A$-modules.
We denote by $P(\lambda)$ the projective cover of $S(\lambda)$ and $I(\lambda)$ the injective hull of $S(\lambda)$.
Let $\le$ be a partial order on $\Lambda$.
For each $\lambda\in\Lambda$, we denote by $\Delta(\lambda)$ the standard $A$-module (that is, it is the maximal factor module of $P(\lambda)$ whose composition factors have the form $S(\mu)$ for some $\mu\le \lambda$). 
Dually, we define the costandard module $\nabla(\lambda)$ for each $\lambda\in \Lambda$.
Let $\mathcal{F}(\Delta)$ be the full subcategory of $\mod A$ whose objects are the modules which have a $\Delta$-filtration.
For $M \in \mathcal{F}(\Delta)$, we denote by $(M: \Delta(\lambda))$ the filtration multiplicity of $\Delta(\lambda)$, which does not depend on the choice of $\Delta$-filtration.

The pair $(A,\le)$ (or simply $A$) is called a \emph{quasi-hereditary algebra} if for each $\lambda\in \Lambda$ there exists an exact sequence
\begin{align}
0 \to K(\lambda) \to P(\lambda) \to \Delta(\lambda) \to 0 \notag
\end{align}
satisfying the following conditions:
\begin{itemize}
\item $\End_{A}(\Delta(\lambda))$ is a division ring;
\item $K(\lambda)\in \mathcal{F}(\Delta)$;
\item if $(K(\lambda):\Delta(\mu))\neq 0$, then $\lambda<\mu$.
\end{itemize}

It is well known that all quasi-hereditary algebras have finite global dimension (see \cite[Theorem 4.3]{PaSc88}).  
By \cite[Theorem 5]{R91}, a quasi-hereditary algebra $A$ has a basic tilting-cotilting $A$-module $T$, which is a direct sum of all Ext-injective objects in $\mathcal{F}(\Delta)$.
We call $T$ the \emph{characteristic tilting module}.

\begin{definition}[{\cite[Proposition A.1]{R10}}]
Let $(A, \le)$ be a quasi-hereditary algebra and $T$ its characteristic tilting module.
\begin{itemize}
\item[(1)] The pair $(A,\le)$ (or simply $A$) is called a \emph{right-strongly quasi-hereditary algebra} if it satisfies one of the following equivalent conditions.
\begin{itemize}
\item[(a)] $\pd \Delta(\lambda)\le 1$ for each $\lambda\in \Lambda$.
\item[(b)] $\pd X\le 1$ for each $X\in\mathcal{F}(\Delta)$.
\item[(c)] $\pd T \le 1$.
\end{itemize}
Dually, we define a left-strongly quasi-hereditary algebra.
\item[(2)] The pair $(A,\le)$ (or simply $A$) is called a \emph{strongly quasi-hereditary algebra} if it is both right-strongly quasi-hereditary and left-strongly quasi-hereditary.
\end{itemize}
\end{definition}

Ringel shows that if $A$ is strongly quasi-hereditary, then its global dimension is at most two (see \cite[Proposition A.2]{R10}).
However, the converse does not hold in general.
On the other hand, if $\gl A \le 2$, then there exists a partial order $\le$ on $\Lambda$ such that $(A,\le)$ is a right-strongly quasi-hereditary algebra but not necessarily strongly quasi-hereditary (see \cite[Theorems 4.1 and 4.6]{T20}).
Then we have the following question.
\begin{question}
Assume that $\gl A \le 2$ so that $(A,\le)$ is right-strongly quasi-hereditary.
When is $(A,\le)$ a strongly quasi-hereditary algebra? 
\end{question}

In \cite{T20} and \cite{T19}, the author gives a complete answer to the question when $A$ is an Auslander algebra or an Auslander--Dlab--Ringel algebra.
In the following, we give a partial answer for almost $1$-Auslander algebras.
We assume that $A$ is an almost $1$-Auslander algebra. 
Let $I$ be a basic injective $A$-module with the property that $\add I$ consists of all injective $A$-modules with projective dimension at most one and $\mathbb{T}^{1}$ the basic $1$-tilting module.
Since $\gl A \le 2$, we can choose $\le$ so that $(A,\le)$ is right-strongly quasi-hereditary, and let $\mathbb{T}$ be its characteristic tilting module. 

The following theorem is a main result of this section.

\begin{theorem}\label{thm-tilting-sqh}
Let $A$ be an almost $1$-Auslander algebra.
Keep the notation above.
Consider the following conditions:
\begin{itemize}
\item[(1)] $(A,\le)$ is a strongly quasi-hereditary algebra;
\item[(2)] $\mathbb{T}\cong \mathbb{T}^{1}$;
\item[(3)] $P(\mathbb{T})\in \add I$, where $P(\mathbb{T})$ is the projective cover of $\mathbb{T}$.
\end{itemize}
Then {\rm(3)}$\Rightarrow${\rm(2)}$\Rightarrow${\rm(1)} holds.
Moreover if $I$ is projective, then {\rm(1)}$\Rightarrow${\rm(3)} holds. 
\end{theorem}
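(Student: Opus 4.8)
The plan is to prove the three implications in order, exploiting the dictionary between tilting modules in $\Fac_1(I)\cap\Sub^1(I)$ and characteristic tilting modules, together with the explicit description $\mathbb{T}^1 = \cok f^0 \oplus I$ coming from the minimal injective coresolution $0\to A\xrightarrow{f^0}I^0\xrightarrow{f^1}I^1\to 0$ of an almost $1$-Auslander algebra (here $\id A\le 2$ and $I^0,I^1\in\add I$).

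\emph{(3)$\Rightarrow$(2).} Assume $P(\mathbb{T})\in\add I$. Since $A$ is right-strongly quasi-hereditary, $\pd\mathbb{T}\le 1$, and since $\mathbb{T}$ is a characteristic tilting module it is in particular a basic tilting-cotilting $A$-module with $\pd\mathbb{T}\le 1$; I would first argue that $\mathbb{T}\in\Fac_1(I)\cap\Sub^1(I)$. The containment $\mathbb{T}\in\Sub^1(\inj A)=\mod A$ is automatic from $\id\mathbb{T}\le 1$ applied appropriately; more precisely, a minimal injective copresentation $0\to\mathbb{T}\to J^0\to J^1$ exists, and using Lemma \ref{lem-ARS-5.5-3}(4) (every indecomposable injective has $\pd\le 1$ or $\pd = 2$) together with the tilting-cotilting property one shows $J^0,J^1\in\add I$, hence $\mathbb{T}\in\Sub^2(I)\subseteq\Sub^1(I)$. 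For $\mathbb{T}\in\Fac_1(I)$: the minimal projective presentation $P_1\to P_0\to\mathbb{T}\to 0$ has $P_0\in\add P(\mathbb{T})\subseteq\add I$ by hypothesis, and $\pd\mathbb{T}\le 1$ forces the kernel $P_1$ to be projective; one then checks $P_1\in\add I$ as well (using that $\nu$ preserves the relevant summands, or by a direct socle argument), so $\mathbb{T}\in\Fac_1(I)$. Thus $\mathbb{T}$ is a basic $1$-tilting module lying in $\Fac_1(I)\cap\Sub^1(I)=\C_1$, and by Theorem \ref{main-thm1} (uniqueness, via Lemma \ref{lem-uniquness-tilting}(3)) it must coincide with $\mathbb{T}^1$.

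\emph{(2)$\Rightarrow$(1).} Assume $\mathbb{T}\cong\mathbb{T}^1$. Since $(A,\le)$ is already right-strongly quasi-hereditary, it remains to show it is left-strongly quasi-hereditary, i.e. $\id\nabla(\lambda)\le 1$ for all $\lambda$, equivalently $\id\mathbb{T}\le 1$ (the dual of condition (c), applied to the characteristic tilting module, which is also the characteristic tilting module for the opposite order datum). By Lemma \ref{cor-cotilting} (or directly: $\mathbb{T}^1=\cok f^0\oplus I$, and both summands have injective dimension at most $1$ since $\id A\le 2$ gives $\id\cok f^0\le 1$, while $\id I=0$), we get $\id\mathbb{T}^1\le 1$, hence $\id\mathbb{T}\le 1$, so $(A,\le)$ is strongly quasi-hereditary.

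\emph{(1)$\Rightarrow$(3) when $I$ is projective.} Assume $(A,\le)$ is strongly quasi-hereditary and $I\in\proj A$. Then $\pd\mathbb{T}\le 1$ and $\id\mathbb{T}\le 1$, so $\mathbb{T}$ is a $1$-tilting and $1$-cotilting module; arguing as in (3)$\Rightarrow$(2) it lies in $\C_1$, so by uniqueness $\mathbb{T}\cong\mathbb{T}^1$. Now I would compute $P(\mathbb{T})=P(\mathbb{T}^1)$: from $\mathbb{T}^1=\cok f^0\oplus I$ with $I$ projective, $P(\mathbb{T}^1)=P(\cok f^0)\oplus I$, and the projective cover of $\cok f^0$ is a summand of $I^0\in\add I$; hence $P(\mathbb{T})\in\add I$. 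The main obstacle I anticipate is the bookkeeping in (3)$\Rightarrow$(2), namely verifying that \emph{both} terms of the minimal projective presentation of $\mathbb{T}$ lie in $\add I$ (so that $\mathbb{T}\in\Fac_1(I)$) purely from $P(\mathbb{T})\in\add I$ and $\pd\mathbb{T}\le 1$ — this requires care with the Nakayama functor and with which indecomposable injectives can occur as syzygies, and is where Lemma \ref{lem-ARS-5.5-3}(2),(4) and the structure of almost $1$-Auslander algebras must be used most delicately.
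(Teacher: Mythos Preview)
Your implication (2)$\Rightarrow$(1) is correct and matches the paper. The other two implications have issues.

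First, the ``main obstacle'' you flag is not one: $\Fac_1(I)$ requires only an epimorphism from $\add I$, so $P(\mathbb{T})\in\add I$ already gives $\mathbb{T}\in\Fac_1(I)$; there is no need to control $P_1$. The genuine gap in your (3)$\Rightarrow$(2) is the step $\mathbb{T}\in\Sub^1(I)$. Your sketch (``Lemma~\ref{lem-ARS-5.5-3}(4) together with the tilting--cotilting property'') does not actually force the injective hull of $\mathbb{T}$ to lie in $\add I$: knowing that an indecomposable injective summand $I'$ of $J^0$ with $I'\notin\add I$ has $\pd I'=2$ gives no contradiction with $\pd\mathbb{T}\le 1$, and the cotilting property of a characteristic tilting module does not bound $\id\mathbb{T}$. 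The paper handles this with a separate lemma (Lemma~\ref{lem-pd-sub}(1)): if $A\in\Sub^2(I)$ and $\pd X\le 1$, then the injective hull $I(X)$ lies in $\add I$. Its proof is a Snake Lemma diagram chase comparing a minimal projective resolution $0\to P_1\to P_0\to X\to 0$ with the injective hulls $I(P_i)\in\add I$, and is not recoverable from the ingredients you list.

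Second, your (1)$\Rightarrow$(3) is circular. You write ``arguing as in (3)$\Rightarrow$(2) it lies in $\C_1$'', but in (3)$\Rightarrow$(2) the membership $\mathbb{T}\in\Fac_1(I)$ came precisely from the hypothesis $P(\mathbb{T})\in\add I$, which is what you are now trying to prove. The paper avoids this by applying the dual lemma (Lemma~\ref{lem-pd-sub}(2)) directly: when $I$ is projective one has $\add I=\proj A\cap\inj A$ and hence $\kD A\in\Fac_2(I)$ (left--right symmetry of dominant dimension), so $\id\mathbb{T}\le 1$ forces $P(\mathbb{T})\in\add I$ without ever passing through $\mathbb{T}\cong\mathbb{T}^1$. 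Your subsequent computation of $P(\mathbb{T}^1)$ is correct but redundant once this direct route is taken.
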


First we give an observation for almost $0$-Auslander algebras or equivalently, hereditary algebras.
\begin{example}\label{example-hereditary}
\begin{itemize}
\item[(1)] Any almost $0$-Auslander algebra is always a strongly quasi-hereditary algebra since all standard modules have projective dimension at most one and all costandard modules have injective dimension at most one.
\item[(2)] Let $A$ be an artin algebra. If $A$ is a right-strongly (respectively, left-strongly) quasi-hereditary algebra with $T\cong\kD A$ (respectively, $T\cong A$), then $A$ is an almost $0$-Auslander algebra. 
Indeed, since $A$ is a right-strongly quasi-hereditary algebra, we have $\pd T \le 1$, and hence $\pd \kD A \le 1$. 
Hence the assertion follows from Lemma \ref{lem-ARS}(1).
\end{itemize}
\end{example}

To prove Theorem  \ref{thm-tilting-sqh}, we need the following lemma.
\begin{lemma}\label{lem-pd-sub}
The following statements hold.
\begin{itemize}
\item[(1)] Let $I$ be an injective $A$-module.
Assume that $A \in \Sub^{2}(I)$.
If $\pd X \le 1$, then the injective hull $I(X)$ is in $\add I$. In particular, $X\in \Sub^{1}(I)$.
\item[(2)] Let $P$ be a projective $A$-module.
Assume that $\kD A \in \Fac_{2}(P)$.
If $\id Y \le 1$, then  the projective cover $P(Y)$ is in $\add P$. In particular, $Y\in \Fac_{1}(P)$.
\end{itemize}
\end{lemma}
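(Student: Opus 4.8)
I would prove statement (1) and obtain (2) as its dual — applying (1) to the opposite algebra $A^{\op}$, with $\kD P$ in place of the injective module $I$ and $\kD Y$ in place of $X$, turns $\pd$ into $\id$, $\Sub$ into $\Fac$, injective hulls into projective covers, and the hypothesis $\kD A\in\Fac_2(P)$ into $A^{\op}\in\Sub^2(\kD P)$. So I concentrate on (1). Since $I$ is injective, $I(X)\in\add I$ is equivalent to $X\in\Sub^1(I)$ (if $X$ embeds into some $I'\in\add I$, then $I(X)$ is a direct summand of $I'$, and conversely), so it suffices to show $X\in\Sub^1(I)$ under the hypotheses $A\in\Sub^2(I)$ and $\pd X\le 1$.

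The core of the argument is a pushout. As $\pd X\le 1$, choose a projective presentation $0\to P_1\xto{f}P_0\to X\to 0$. Since $P_1$ is a direct summand of a free module and $A\in\Sub^2(I)$, we have $P_1\in\Sub^2(I)$; fix a witnessing exact sequence $0\to P_1\xto{\alpha}I_0\to C\to 0$ with $I_0\in\add I$ and with $C$ embedded into a further object of $\add I$, so $C\in\Sub^1(I)$. Forming the pushout $Q$ of $f$ and $\alpha$ yields two short exact sequences $0\to I_0\to Q\to X\to 0$ and $0\to P_0\to Q\to C\to 0$; the first splits because $\Ext^1_A(X,I_0)=0$, so $Q\cong I_0\oplus X$, and the second becomes
\[0\to P_0\to I_0\oplus X\to C\to 0.\]

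To conclude, I would use that $\Sub^1(I)$ is closed under submodules (clear) and under extensions — the latter being again a one-line pushout argument along the inclusion of a submodule into an object of $\add I$, or simply the observation that the socle of an extension involves only simple modules occurring in the socles of the sub and the quotient. Since $P_0\in\Sub^1(I)$ (once more as a summand of a free module) and $C\in\Sub^1(I)$, the displayed sequence gives $I_0\oplus X\in\Sub^1(I)$, hence $X\in\Sub^1(I)$ as a direct summand; this proves (1), and then (2) follows by the duality above. The only places demanding care are the routine pushout bookkeeping — that the two cokernels really are $X$ and $C$ — and the closure of $\Sub^1(I)$ under extensions; I do not anticipate a genuine obstacle, and in particular the argument never needs the standing hypothesis that $A$ is an almost $1$-Auslander algebra.
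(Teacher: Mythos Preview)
Your argument is correct. The pushout bookkeeping is fine: since both $f$ and $\alpha$ are monomorphisms, the pushout square gives exactly the two short exact sequences you claim, with cokernels $X$ and $C$ respectively; injectivity of $I_0$ forces the first to split; and your two justifications for closure of $\Sub^1(I)$ under extensions (the pushout/extension along an embedding into $\add I$, or the observation that every simple in the socle of an extension already appears in the socle of the sub or of the quotient) are both valid. The duality reducing (2) to (1) is standard.

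The paper follows a different route. Starting from the same resolution $0\to P_1\to P_0\to X\to 0$, it maps into the injective hulls $I(P_i)$, applies the Snake lemma to obtain a monomorphism $\ker\iota\hookrightarrow\cok\iota_1$ (where $\iota:X\to X'$ is the induced map on cokernels), uses $A\in\Sub^2(I)$ to conclude $I(\ker\iota)\in\add I$, and then builds by hand an embedding $X\hookrightarrow X'\oplus I(\ker\iota)$. Your approach is more modular: it isolates the closure of $\Sub^1(I)$ under extensions as a separate and reusable fact, and the pushout packaging avoids the Snake lemma and the somewhat ad~hoc, elementwise construction of the final embedding that the paper uses. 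The paper's version, on the other hand, tracks the injective hull $I(X)$ slightly more explicitly. Both arguments have the same shape---exploit $P_1\in\Sub^2(I)$ via a diagram built on the projective resolution---but yours is cleaner.
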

\begin{proof}
We only prove (1); the proof of (2) is similar. 
If $\pd X =0$, then $X \in \add A$, and hence the assertion holds. 
We assume $\pd X =1$.
Then we obtain a minimal projective resolution 
\begin{align}
0 \to P_{1} \xto{\rho_{1}} P_{0} \xto{\rho_{0}} X \to 0. \notag
\end{align}
Let $\iota_{i}:P_{i}\to I(P_{i})$ be the injective hull of $P_{i}$ for each $i\in\{ 0,1 \}$.
Then we have the following commutative diagram:
\begin{align}
\xymatrix{
0\ar[r]&P_{1}\ar[r]^{\rho_{1}}\ar[d]^{\iota_{1}}&P_{0}\ar[r]^{\rho_{0}}\ar[d]^{\iota_{0}}&X\ar[r]\ar[d]^{\iota}&0\; \\
0\ar[r]&I(P_{1})\ar[r]^{\rho'_{1}}&I(P_{0})\ar[r]^{\rho'_{0}}&X'\ar[r]&0.
}\notag
\end{align}
Since $\rho_{1}'$ is split and $A \in \Sub^{2}(I)$, we have $X' \in \add I$.
By the Snake lemma, there exists a monomorphism $\ker\iota \to \cok\iota_{1}$. 
Since $A\in \Sub^2(I)$, $\cok \iota_{1}$ is embedded into some $I' \in \add I$, and hence so is $\ker \iota$.
Let $I(\ker \iota)$ and $I(\im \iota)$ be injective hulls of $\ker \iota$ and $\im \iota$ respectively.
Then we obtain that $I(\ker \iota)$ and $I(\im \iota)$ are contained in $\add I$ because there exist monomorphisms $\ker \iota \to I'$ and $\im \iota \to X'$.
Since we have a monomorphism $X \to I(\ker \iota) \oplus I(\im \iota) \in \add I$, the injective hull $I(X)$ of $X$ is contained in $\add I$.
\end{proof}

Now we are ready to prove Theorem \ref{thm-tilting-sqh}.
\begin{proof}[Proof of Theorem \ref{thm-tilting-sqh}]

(3)$\Rightarrow$(2): Since $A$ is right strongly quasi-hereditary, we have $\pd\mathbb{T} \le 1$.
On the other hand, $P(\mathbb{T})\in \add I$ implies $\mathbb{T}\in \Fac_{1}(I)$.
Thus $\mathbb{T}\cong \mathbb{T}^{1}$ by Lemma \ref{lem-uniquness-tilting}(3).

(2)$\Rightarrow$(1): Note that $\mathbb{T}^1$ is a $1$-cotilting module by Theorem \ref{thm-almost-AG}.  
Since $\id \mathbb{T} =\id \mathbb{T}^{1}\le 1$ holds, $A$ is a left-strongly quasi-hereditary algebra.
Hence the assertion holds.

In the following, we assume that $I$ is projective.

(1)$\Rightarrow$(3): Since $I \in \proj A$, we have $\gl A=2$.
By Lemma \ref{lem-cotilting}, there exists an exact sequence $0 \to \mathbb{T}^{1} \to J_{1} \oplus I \to \kD A\to 0$ such that $J_{1} \in \add I$.
Thus we obtain $\kD A \in \Fac_{2}(I)$ because $\mathbb{T}^{1} \in \Fac_{1}(I)$.
Since $A$ is left-strongly quasi-hereditary, we have $\id \mathbb{T} \le 1$. 
By Lemma \ref{lem-pd-sub}(2), the projective cover $P(\mathbb{T})$ is in $\add I$.
\end{proof}

If we do not assume that $I$ is projective, then (1)$\Rightarrow$(2) is not always satisfied as the following examples show.

\begin{example}
\begin{itemize}
\item[(1)] Let $A$ be an almost $0$-Auslander algebra.
Then we have $\mathbb{T}^{1}=\kD A \not \cong A$.
On the other hand, by Example \ref{example-hereditary}(1), $A$ is a strongly quasi-hereditary algebra with characteristic tilting module $\mathbb{T}$. 
If $\mathbb{T}\cong A$, then we have $\mathbb{T}\neq \mathbb{T}^{1}$.
For example, when $A$ is the path algebra of $1\to 2\to 3$ with partial order $\{ 3 < 2< 1 \}$, we have $\mathbb{T}\cong A \not\cong \kD A \cong \mathbb{T}^{1}$.
\item[(2)] Let $A$ be the algebra defined by the quiver
\begin{align} 
\xymatrix@=15pt{ &2 \ar[rd]^{\beta} \\
 1 \ar[ru]^{\alpha} \ar[rd]_{\gamma} && 4 \\
 &3 \ar[ru]_{\delta}
 }
\notag
\end{align}
with a relation $\alpha \beta- \gamma \delta$.
Then we obtain $I=I(2)\oplus I(3)\oplus I(4)$, which is not projective.
Moreover, $\gl A = 2 = \gdom{I} A$ holds. Indeed, $A$ has a minimal injective coresolution
\begin{align}
0 \to A \to I(4) ^{\oplus 4} \to I(2)^{\oplus 2} \oplus I(3)^{\oplus 2} \to I(1) \to 0.\notag
\end{align}
Therefore we have $\mathbb{T}^1= I(4)/S(4) \oplus I(2) \oplus I(3) \oplus I(4)$.
On the other hand, $A$ is a strongly quasi-hereditary algebra with respect to $\{ 2<3<1<4 \}$ and the characteristic tilting module is $\mathbb{T}= I(4)/S(4) \oplus S(2) \oplus S(3) \oplus I(4)$.
\end{itemize}
\end{example}

Let $A$ be an artin algebra with $\gl A=2$. 
Then $A$ is an Auslander algebra if and only if $\gdom{I}A\ge 2$ and $I \in \proj A$. 
Hence, as an application of Theorem \ref{thm-tilting-sqh}, we have the following corollary.

\begin{corollary} \label{cor-sqh-Aus}
Let $A$ be an Auslander algebra and $\mathbb{T}$ the characteristic tilting module of $(A, \le)$. 
Then the following statements are equivalent.
\begin{itemize}
\item[(1)] $(A, \le)$ is a strongly quasi-hereditary algebra.
\item[(2)] $\mathbb{T}\cong \mathbb{T}^{1}$.
\item[(3)] $P(\mathbb{T})\in \add I$, where $P(\mathbb{T})$ is the projective cover of $\mathbb{T}$.
\item[(4)] $\End_{A}(I)$ is a Nakayama algebra.
\end{itemize} 
\end{corollary}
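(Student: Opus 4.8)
The plan is to deduce the corollary from Theorem~\ref{thm-tilting-sqh} together with an identification of $\End_A(I)$ via the Auslander correspondence. First I would record the standing facts attached to $A$ being an Auslander algebra, recalled just before the statement: $\gl A=2$, $I\in\proj A$, and $\gdom{I}A\ge 2$. In particular $A$ satisfies the hypotheses in force before Theorem~\ref{thm-tilting-sqh}, and moreover $I$ is projective; hence Theorem~\ref{thm-tilting-sqh} already supplies $(1)\Leftrightarrow(2)\Leftrightarrow(3)$, and the only remaining point is to splice $(4)$ into this chain, which I would do by proving $(1)\Leftrightarrow(4)$.

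To handle $(4)$ I would first pin down $\End_A(I)$. Since $A$ is an Auslander algebra, we may write $A\cong\End_\Lambda(M)$ for a representation-finite artin algebra $\Lambda$ and a basic additive generator $M$ of $\mod\Lambda$, in such a way that the covariant equivalence $\Hom_\Lambda(M,-)\colon\add M\xto{\sim}\proj A$ carries $\add\Lambda$ onto $\proj A\cap\inj A=\add I$. Consequently $\End_A(I)$ is isomorphic, up to passing to the opposite algebra (according to one's side conventions), to $\End_\Lambda(\Lambda)\cong\Lambda$; since being a Nakayama algebra is invariant under Morita equivalence and under $(-)^{\op}$, condition~$(4)$ is equivalent to the assertion that $\Lambda$ is a Nakayama algebra.

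It thus remains to prove that $A$ is strongly quasi-hereditary if and only if $\Lambda$ is a Nakayama algebra; combined with the preceding paragraph this yields $(1)\Leftrightarrow(4)$, and it also reconciles the informally stated $(1)$ with the fixed right-strongly quasi-hereditary order used in Theorem~\ref{thm-tilting-sqh}, as both versions become equivalent to ``$\Lambda$ Nakayama''. For the implication ``$\Lambda$ Nakayama $\Rightarrow$ $A$ strongly quasi-hereditary'' I would invoke the Auslander--Dlab--Ringel picture, established in \cite{T}; once this is known, $(3)$ also follows from $(1)$ through Theorem~\ref{thm-tilting-sqh}. For the converse I would argue by contraposition: if $\Lambda$ is not Nakayama, then some indecomposable $\Lambda$-module is not uniserial, and, using the description of the standard modules and of the characteristic tilting module $\mathbb{T}$ of the Auslander algebra $A$, I would exhibit an indecomposable summand of $\mathbb{T}$ whose projective cover is not projective-injective, so that $(3)$, and hence $(1)$, fails. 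I expect this last step---turning non-uniseriality of some $\Lambda$-module into a failure of $P(\mathbb{T})\in\add I$---to be the main obstacle, since it requires a concrete grip on the quasi-hereditary structure and the characteristic tilting module of an arbitrary Auslander algebra; this is precisely the analysis carried out in \cite{T}, with the uniserial-local case also covered by \cite{T19}.
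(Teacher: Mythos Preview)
Your proposal is correct and follows the same route as the paper: the equivalences $(1)\Leftrightarrow(2)\Leftrightarrow(3)$ come directly from Theorem~\ref{thm-tilting-sqh} (using that $I$ is projective for an Auslander algebra), and $(1)\Leftrightarrow(4)$ is obtained from \cite[Theorem~4.6]{T}. The paper's proof simply cites that theorem in one line, whereas you unpack it by first identifying $\End_A(I)$ with the underlying representation-finite algebra $\Lambda$ via the Auslander correspondence and then invoking the characterisation of strongly quasi-hereditary Auslander algebras from \cite{T}; this extra explanation is helpful but not a genuinely different argument.
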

\begin{proof}
(1)$\Leftrightarrow$(2)$\Leftrightarrow$(3): This follows from Theorem \ref{thm-tilting-sqh}. 

(1)$\Leftrightarrow$(4): This follows from \cite[Theorem 4.6]{T20}.
\end{proof}

As an application, we give the following proposition, which is a generalization of \cite[$\S$ 7]{DR92} and \cite{E17}. 

\begin{proposition} \label{prop-dre}
Let $A$ be an Auslander algebra and $eA$ a maximal projective-injective direct summand of $A$.
If $A$ is strongly quasi-hereditary, then $\mod (A/AeA)$ is equivalent to $\mathcal{F}(\Delta)/\add \mathbb{T}^{1}$. 
\end{proposition}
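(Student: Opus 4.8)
The plan is to connect the stable category $\mathcal{F}(\Delta)/\add\mathbb{T}^1$ with $\mod(A/AeA)$ via the functor $\Hom_A(-, \mathbb{T}^1)$ together with a suitable localisation. First I would recall that, since $A$ is strongly quasi-hereditary, Theorem~\ref{thm-tilting-sqh} gives $\mathbb{T}\cong\mathbb{T}^1$, so $\mathbb{T}^1$ is both the characteristic tilting module and the basic $1$-tilting module in $\Fac_1(I)\cap\Sub^1(I)$; moreover $I$ is projective (being an Auslander algebra with $\gdom{I}A\ge 2$), and one checks $\add I = \add eA$ is precisely the maximal projective-injective summand, so $A/AeA$ is the quotient by the trace ideal of $I$. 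The relevant homological fact to exploit is that $\mathcal{F}(\Delta) = {}^{\perp}\mathbb{T}^1$ (or the appropriate Ext-orthogonal description of the $\Delta$-filtered modules for a strongly quasi-hereditary algebra), and that $\add\mathbb{T}^1$ is exactly the Ext-injective objects in $\mathcal{F}(\Delta)$, which is what makes the additive quotient $\mathcal{F}(\Delta)/\add\mathbb{T}^1$ well-behaved.

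The main step is to produce an explicit equivalence. I would consider the functor $F := \Hom_A(-,\mathbb{T}^1)\colon \mod A \to \mod B^{\op}$, where $B = \End_A(\mathbb{T}^1)$; restricted to $\mathcal{F}(\Delta)$ this is exact (as $\mathbb{T}^1$ is Ext-injective on $\mathcal{F}(\Delta)$) and kills precisely $\add\mathbb{T}^1$, so it factors through $\mathcal{F}(\Delta)/\add\mathbb{T}^1$. The target should be identified with $\mod(A/AeA)$ by observing that $B^{\op}$ is again related to $A$ via tilting (Theorem~\ref{thm-domdim-b} and Proposition~\ref{prop-gl-b} control $\gl B^{\op}$), and that the image of $F|_{\mathcal{F}(\Delta)}$ lands in, and generates, the subcategory of $\mod B^{\op}$ corresponding under a recollement/idempotent quotient to $\mod(A/AeA)$. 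Concretely, since $I$ is projective-injective, $\Hom_A(I,\mathbb{T}^1)$ is the projective-injective part of $\mod B^{\op}$ by Lemma~\ref{lem-clas-domdim-b}(1), and modding out by it on the $B^{\op}$-side should match modding out by $\add\mathbb{T}^1$ on the $A$-side after applying $F$. I would verify fully faithfulness of the induced functor on the quotient using that $\mathcal{F}(\Delta)$ has enough $\add\mathbb{T}^1$-injectives (every $M\in\mathcal{F}(\Delta)$ embeds in an object of $\add\mathbb{T}^1$ with cokernel again in $\mathcal{F}(\Delta)$), which lets one compute $\Hom$ in the quotient category as a cokernel, and then transport that computation through the exact functor $F$; density follows by exhibiting preimages of the indecomposable non-projective-injective $A/AeA$-modules among the standard modules $\Delta(\lambda)$.

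The hard part will be pinning down the precise identification of the target category: one must show that applying $\Hom_A(-,\mathbb{T}^1)$ to $\mathcal{F}(\Delta)$ and then stabilising by the projective-injectives recovers exactly $\mod(A/AeA)$ and not some larger or smaller subcategory. This requires knowing that under the tilting equivalence (or the associated torsion pair) the category $\mathcal{F}(\Delta)$ corresponds to the modules over $B^{\op}$ supported away from the idempotent cutting out $I$, which in turn rests on the structure theory of Auslander algebras (the standard modules of a strongly quasi-hereditary Auslander algebra have a transparent description, following \cite{T}) together with the dominant-dimension bookkeeping of Section~4. Once that correspondence is in place, the equivalence $\mathcal{F}(\Delta)/\add\mathbb{T}^1 \simeq \mod(A/AeA)$ should follow formally; I expect the references \cite{DR92} and \cite{E17} to provide the template for the Auslander-algebra case, with our contribution being the reformulation in terms of the tilting module $\mathbb{T}^1$ and its coincidence with the characteristic tilting module.
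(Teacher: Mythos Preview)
Your proposal takes a detour through the tilted algebra $B^{\op}=\End_A(\mathbb{T}^1)^{\op}$ that creates real problems, and it contains a concrete error. You claim that $F=\Hom_A(-,\mathbb{T}^1)$, restricted to $\mathcal{F}(\Delta)$, ``kills precisely $\add\mathbb{T}^1$''. This is false: $\Hom_A(T,\mathbb{T}^1)$ is a nonzero projective $B^{\op}$-module for every $T\in\add\mathbb{T}^1$. The functor $F$ sends $\add\mathbb{T}^1$ to $\proj B^{\op}$, so it does not factor through $\mathcal{F}(\Delta)/\add\mathbb{T}^1$ in the way you describe. More structurally, $A/AeA$ is a quotient of $A$, not of $B^{\op}$, so even after repairing the previous point you would still need to transport a subcategory of $\mod B^{\op}$ back to $\mod A$ via the tilting equivalence before identifying it with $\mod(A/AeA)$; this is circular and you never explain why the round trip lands on the correct idempotent quotient.

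The paper avoids the tilted algebra entirely and stays inside $\mod A$. The key intermediary is the Hom-orthogonal category $\mathcal{H}(\mathbb{T})=\{Y\in\mod A\mid \Hom_A(\mathbb{T},Y)=0\}$, not $\mod B^{\op}$. Dlab--Ringel (Lemma~\ref{lem-dr-thm3}) already gives an equivalence $\mathcal{F}(\Delta)/\add\mathbb{T}\simeq\mathcal{H}(\mathbb{T})$ for any strongly quasi-hereditary algebra; the functor realising it is essentially $M\mapsto\cok(M\hookrightarrow T_M)$ for a minimal left $\add\mathbb{T}$-approximation, not $\Hom_A(-,\mathbb{T})$. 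Then Lemma~\ref{lem-dr-thm4} identifies $\mathcal{H}(\mathbb{T})$ with $\mod(A/\tr_{\mathbb{T}}A)$, once one checks that the projective cover of every costandard module is injective; this holds here because $\id\nabla(\lambda)\le 1$ and Lemma~\ref{lem-pd-sub}(2) forces $P(\nabla(\lambda))\in\add eA$. The remaining step is purely about trace ideals: Lemma~\ref{lem-tr} shows $\tr_{\mathbb{T}}A=\tr_{P(\mathbb{T})}A$, and since $P(\mathbb{T})=eA$ this equals $AeA$. Finally Corollary~\ref{cor-sqh-Aus} gives $\mathbb{T}\cong\mathbb{T}^1$, so $\add\mathbb{T}=\add\mathbb{T}^1$. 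Your identification $\add I=\add eA$ and the use of $\mathbb{T}\cong\mathbb{T}^1$ are correct; what is missing is the realisation that the target category sits inside $\mod A$ as $\mathcal{H}(\mathbb{T})$ and is pinned down by a trace-ideal computation, rather than through any property of $B^{\op}$.
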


In the rest of this section, we give a proof of Proposition \ref{prop-dre} following the strategy of \cite{DR92}.

\begin{lemma}[{\cite[Theorem 3]{DR92}}] \label{lem-dr-thm3}
Let $A$ be a strongly quasi-hereditary algebra and $T$ the characteristic tilting module of $A$. 
Then we have an equivalence $\mathcal{F}(\Delta)/ \add T \simeq \mathcal{H}(T)$, where $\mathcal{H}(T):= \{ Y \in \mod A \mid \Hom_A(T, Y)=0\}$.
\end{lemma}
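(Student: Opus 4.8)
The plan is to realise the asserted equivalence through the torsion-free quotient functor attached to $\mathbb{T}$. Since $A$ is strongly quasi-hereditary, the characteristic tilting module has $\pd\mathbb{T}\le 1$, so it determines a torsion pair $(\mathcal{T},\mathcal{F})$ on $\mod A$ with $\mathcal{T}=\Fac_{1}(\mathbb{T})$ and $\mathcal{F}=\mathcal{H}(\mathbb{T})$; write $t(-)$ for the corresponding torsion radical. The key homological input I would establish first is the identity $\Fac_{1}(\mathbb{T})=\mathcal{F}(\nabla)$. The inclusion $\mathcal{F}(\nabla)\subseteq\Fac_{1}(\mathbb{T})$ is immediate from $\mathbb{T}\in\mathcal{F}(\Delta)$ together with $\Ext_{A}^{1}(\mathcal{F}(\Delta),\mathcal{F}(\nabla))=0$. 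For the reverse inclusion I would observe that every $M\in\mathcal{F}(\Delta)$ admits a short exact sequence $0\to M\to \mathbb{T}_{0}\to \mathbb{T}_{1}\to 0$ with $\mathbb{T}_{0},\mathbb{T}_{1}\in\add\mathbb{T}$: as $\mathcal{F}(\Delta)$ has enough $\add\mathbb{T}$-injectives \cite{R91}, one embeds $M$ into some $\mathbb{T}_{0}\in\add\mathbb{T}$ with cokernel $\mathbb{T}_{1}\in\mathcal{F}(\Delta)$, and then $\Ext_{A}^{1}(\Delta(\lambda),\mathbb{T}_{1})=0$ because $\Ext_{A}^{1}(\Delta(\lambda),\mathbb{T}_{0})=0$ and $\Ext_{A}^{2}(\Delta(\lambda),M)=0$ (using $\pd\Delta(\lambda)\le 1$), so $\mathbb{T}_{1}\in\mathcal{F}(\Delta)\cap\mathcal{F}(\nabla)=\add\mathbb{T}$. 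Applying $\Hom_{A}(-,X)$ to this sequence, for any $X$ with $\Ext_{A}^{1}(\mathbb{T},X)=0$ one gets $\Ext_{A}^{1}(\mathbb{T}_{0},X)=0$ and $\Ext_{A}^{2}(\mathbb{T}_{1},X)=0$ (because $\pd\mathbb{T}\le 1$), hence $\Ext_{A}^{1}(M,X)=0$; thus $\Ext_{A}^{1}(\mathcal{F}(\Delta),X)=0$, which means $X\in\mathcal{F}(\nabla)$ (as $\pd\mathcal{F}(\Delta)\le 1$ forces $\mathcal{F}(\nabla)=\mathcal{F}(\Delta)^{\perp_{1}}$). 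In particular $\Ext_{A}^{1}(M,Z)=0$ for every $M\in\mathcal{F}(\Delta)$ and every $Z\in\Fac_{1}(\mathbb{T})$, and this vanishing is what powers the remainder of the argument.

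Next I would introduce the functor: the torsion-free quotient $X\mapsto X/tX$ is an additive functor $\mod A\to\mathcal{H}(\mathbb{T})$ which kills any morphism factoring through $\add\mathbb{T}\subseteq\Fac_{1}(\mathbb{T})$ (such a morphism passes through $T'/tT'=0$). Restricting to $\mathcal{F}(\Delta)$ yields $\overline{q}\colon\mathcal{F}(\Delta)/\add\mathbb{T}\to\mathcal{H}(\mathbb{T})$, and it remains to show $\overline{q}$ is dense, faithful, and full. For density, given $Y\in\mathcal{H}(\mathbb{T})$ take a minimal right $\mathcal{F}(\Delta)$-approximation $g\colon M\to Y$; it exists since $\mathcal{F}(\Delta)$ is contravariantly finite \cite{R91} and it is surjective since $A\in\mathcal{F}(\Delta)$. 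By Wakamatsu's lemma the kernel $K$ satisfies $\Ext_{A}^{1}(\mathcal{F}(\Delta),K)=0$, so $K\in\mathcal{F}(\nabla)=\Fac_{1}(\mathbb{T})=\mathcal{T}$; since $Y\in\mathcal{F}$, the sequence $0\to K\to M\to Y\to 0$ is the canonical torsion sequence of $M$, whence $\overline{q}(M)=M/tM\cong Y$.

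Finally, faithfulness and fullness both reduce to the vanishing of the first paragraph. If $f\colon M\to N$ in $\mathcal{F}(\Delta)$ has $\overline{q}(f)=0$, then $f(M)\subseteq tN$; choosing a surjection $\pi\colon T'\to tN$ with $T'\in\add\mathbb{T}$ and kernel $L\in\Fac_{1}(\mathbb{T})$ (possible as $tN\in\Fac_{1}(\mathbb{T})$), the corestriction $M\to tN$ lifts along $\pi$ because its obstruction lies in $\Ext_{A}^{1}(M,L)=0$, so $f$ factors through $T'\in\add\mathbb{T}$ and is zero in the quotient. For fullness, given $g\colon M/tM\to N/tN$, the composite $M\twoheadrightarrow M/tM\xrightarrow{g}N/tN$ lifts along $N\twoheadrightarrow N/tN$ since the obstruction lies in $\Ext_{A}^{1}(M,tN)=0$, and any lift $f$ satisfies $\overline{q}(f)=g$ because $M\twoheadrightarrow M/tM$ is epic. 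Hence $\overline{q}$ is an equivalence. I expect the main obstacle to be the first paragraph, namely producing the $\add\mathbb{T}$-coresolution of length at most one for objects of $\mathcal{F}(\Delta)$ and deducing $\Fac_{1}(\mathbb{T})=\mathcal{F}(\nabla)$; once this relative-homological fact is in place, everything else is a formal torsion-pair manipulation of Dlab--Ringel type.
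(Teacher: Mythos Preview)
The paper does not supply its own proof of this lemma; it is quoted verbatim as \cite[Theorem~3]{DR92} and used as a black box in the proof of Proposition~\ref{prop-dre}. So there is nothing in the paper to compare your argument against.

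That said, your torsion-theoretic argument is essentially correct and is a clean way to establish the result in the strongly quasi-hereditary case, exploiting $\pd\mathbb{T}\le 1$ to get an honest torsion pair and the length-one $\add\mathbb{T}$-coresolution of every object of $\mathcal{F}(\Delta)$. Two small remarks. First, the identity $\mathcal{F}(\nabla)=\mathcal{F}(\Delta)^{\perp_{1}}$ holds for any quasi-hereditary algebra (it is Ringel's characterisation $X\in\mathcal{F}(\nabla)\Leftrightarrow\Ext_{A}^{1}(\Delta(\lambda),X)=0$ for all $\lambda$), so your parenthetical justification via $\pd\mathcal{F}(\Delta)\le 1$ is not needed there. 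Second, in the faithfulness step you assert that a surjection $\pi\colon T'\to tN$ with $T'\in\add\mathbb{T}$ can be chosen so that $\ker\pi\in\Fac_{1}(\mathbb{T})$; this is true, but only if you take $\pi$ to be a right $\add\mathbb{T}$-approximation (an arbitrary surjection from $\add\mathbb{T}$ need not have kernel in the torsion class), so say so. Alternatively you can bypass this entirely: apply $\Hom_{A}(-,tN)$ to your coresolution $0\to M\to\mathbb{T}_{0}\to\mathbb{T}_{1}\to 0$ and use $\Ext_{A}^{1}(\mathbb{T}_{1},tN)=0$ to see that the corestriction $M\to tN$ extends along $M\hookrightarrow\mathbb{T}_{0}$, so $f$ factors through $\mathbb{T}_{0}\in\add\mathbb{T}$ directly.
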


For $M, N \in \mod A$, we denote by $\tr_{N} M$ the trace of $N$ in $M$ (that is, it is the submodule of $M$ generated by all homomorphic images of $N$ in $M$). 
 
\begin{lemma}[{\cite[Theorem 4]{DR92}}] \label{lem-dr-thm4}
Assume that $A$ is a quasi-hereditary algebra and the projective cover of every costandard module is injective. 
Then we have $\mathcal{H}(T) = \mod (A/\tr_{T} A)$.
\end{lemma}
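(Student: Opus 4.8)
The plan is to establish the two inclusions $\mathcal{H}(\mathbb{T}) \subseteq \mod(A/\tr_{\mathbb{T}}A)$ and $\mod(A/\tr_{\mathbb{T}}A) \subseteq \mathcal{H}(\mathbb{T})$ separately; the first is purely formal, and the hypothesis on projective covers of costandard modules enters only in the second. Write $\tau := \tr_{\mathbb{T}}A$. I would first observe that $\tau$ is a two-sided ideal: it is a right submodule of $A$ by definition, and for $a \in A$ and $g \in \Hom_{A}(\mathbb{T},A)$ the map $t \mapsto a\,g(t)$ again lies in $\Hom_{A}(\mathbb{T},A)$, so $a\cdot\im g \subseteq \tau$. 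Hence $A/\tau$ is an artin algebra and $\mod(A/\tau)$ is identified with the full subcategory of $\mod A$ of modules $Y$ with $Y\tau = 0$. For the easy inclusion, if $Y \in \mathcal{H}(\mathbb{T})$, $y \in Y$, and $a = g(t) \in \tau$ with $g \in \Hom_{A}(\mathbb{T},A)$ and $t \in \mathbb{T}$, then $t' \mapsto y\,g(t')$ belongs to $\Hom_{A}(\mathbb{T},Y) = 0$, so $ya = 0$; thus $Y\tau = 0$.

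For the reverse inclusion I would first reduce to a statement about projective covers. By the tensor-hom adjunction along the surjection $A \twoheadrightarrow A/\tau$, every $A/\tau$-module $Y$ satisfies $\Hom_{A}(\mathbb{T},Y) \cong \Hom_{A/\tau}(\mathbb{T}\otimes_{A}A/\tau,\,Y)$, so it suffices to prove $\mathbb{T}\otimes_{A}A/\tau = 0$. Choose a projective cover $P = \bigoplus_{j}P(\mu_{j}) \twoheadrightarrow \mathbb{T}$, so that the $S(\mu_{j})$ are exactly the summands of $\top\mathbb{T}$; since $-\otimes_{A}A/\tau$ is right exact and $P(\mu_{j})\otimes_{A}A/\tau \cong e_{\mu_{j}}(A/\tau)$, it is enough to see that each $P(\mu_{j})$ lies in $\add\mathbb{T}$ --- for then $P(\mu_{j})$ is a direct summand of some $\mathbb{T}^{k}$, the corresponding map $\mathbb{T}^{k} \twoheadrightarrow P(\mu_{j}) \hookrightarrow A$ has $e_{\mu_{j}}$ in its image, whence $e_{\mu_{j}} \in \tr_{\mathbb{T}}A = \tau$ and $e_{\mu_{j}}(A/\tau) = 0$. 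So the reverse inclusion follows from the claim $(\ast)$: the projective cover of $\mathbb{T}$ is projective-injective.

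To prove $(\ast)$ I would argue on each indecomposable direct summand $T(\lambda)$ of $\mathbb{T}$. By Ringel's structure theory of the characteristic tilting module \cite{R91}, there is an exact sequence $0 \to K(\lambda) \to T(\lambda) \to \nabla(\lambda) \to 0$ with $K(\lambda) \in \mathcal{F}(\{\nabla(\mu) : \mu < \lambda\})$, and $K(\lambda)$ is superfluous in $T(\lambda)$; consequently $\top T(\lambda) \cong \top\nabla(\lambda)$, so $T(\lambda)$ and $\nabla(\lambda)$ have the same projective cover. By hypothesis $P(\nabla(\lambda))$ is injective, hence $P(T(\lambda))$ is projective-injective; summing over $\lambda$ gives $(\ast)$. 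As a by-product one recovers the sharper description $\tau = AeA$, where $e$ is the idempotent with $\add(eA) = \proj A \cap \inj A$: indeed $eA \in \mathcal{F}(\Delta)\cap\mathcal{F}(\nabla) = \add\mathbb{T}$ by \cite{R91}, so $\tau \supseteq \tr_{eA}A = AeA$; conversely any $g\colon T(\lambda) \to P(\mu)$ factors through $P(T(\lambda))$, which is projective-injective by $(\ast)$, so $\im g \subseteq AeA$.

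The main obstacle is the superfluousness of $K(\lambda)$ in $T(\lambda)$, equivalently the equality $\top T(\lambda) \cong \top\nabla(\lambda)$. The existence of the above exact sequence is standard, but verifying that its kernel lies in $\rad T(\lambda)$ uses the finer properties of $T(\lambda)$ as the (relative-)projective cover of $\nabla(\lambda)$ inside $\mathcal{F}(\nabla)$; I would either quote the relevant result from \cite{R91} directly or derive it from the indecomposability of $T(\lambda)$ together with the minimality of the map $T(\lambda) \to \nabla(\lambda)$. Once $(\ast)$ is available, all remaining steps are formal.
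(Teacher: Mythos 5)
The paper offers no proof of this lemma --- it simply cites \cite[Theorem 4]{DR92} --- so your argument has to stand on its own. Its global structure is sound: the two-sidedness of $\tau:=\tr_{\mathbb{T}}A$, the easy inclusion, the reduction via the adjunction to showing $\mathbb{T}\otimes_{A}A/\tau=0$, and the deduction of this from the claim $(\ast)$ that the projective cover $P(\mathbb{T})$ is projective--injective (hence lies in $\mathcal{F}(\Delta)\cap\mathcal{F}(\nabla)=\add\mathbb{T}$, so its idempotents lie in $\tau$) are all correct. The gap is in your proof of $(\ast)$. The assertion that the kernel $K(\lambda)$ of $T(\lambda)\to\nabla(\lambda)$ is superfluous, equivalently $\top T(\lambda)\cong\top\nabla(\lambda)$, is not part of Ringel's structure theory and is false in general, so it cannot be derived from indecomposability of $T(\lambda)$ together with right minimality of $T(\lambda)\to\nabla(\lambda)$: right minimality only forbids a direct summand of $T(\lambda)$ from being killed, which is automatic once $T(\lambda)$ is indecomposable and says nothing about the kernel lying in the radical. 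A concrete counterexample: let $A$ be the path algebra of $1\to 3\leftarrow 2$ with the order $2<3<1$. Then $\Delta(1)=P(1)$, $\Delta(2)=S(2)$, $\Delta(3)=S(3)$, $\nabla(1)=S(1)$, $\nabla(3)\cong P(2)$, and $T(1)=I(3)$, whose top is $S(1)\oplus S(2)$, whereas $\top\nabla(1)=S(1)$; here the kernel of $T(1)\to\nabla(1)$ is $\nabla(3)\not\subseteq\rad I(3)=S(3)$. (This algebra violates the hypothesis of the lemma, so it does not refute the statement, but it refutes your intermediate claim in the generality in which you assert it, and shows that the hypothesis must enter exactly at this step.)

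The claim $(\ast)$ is nevertheless true under the hypothesis, by a weaker and entirely elementary argument that bypasses the comparison of tops. For any short exact sequence $0\to X\to M\to Z\to 0$, lifting $P(Z)\twoheadrightarrow Z$ through $M$ produces a surjection $P(X)\oplus P(Z)\to M$, so $P(M)$ is a direct summand of $P(X)\oplus P(Z)$. Applying this inductively to a $\nabla$-filtration of $T(\lambda)$ shows that $P(T(\lambda))$ is a direct summand of $\bigoplus_{i}P(\nabla(\mu_{i}))$, where the $\nabla(\mu_{i})$ are the filtration factors; each $P(\nabla(\mu_{i}))$ is injective by hypothesis, hence so is $P(T(\lambda))$. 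You never need $P(T(\lambda))\cong P(\nabla(\lambda))$, only its injectivity. With this repair the remainder of your argument, including the by-product $\tau=AeA$, goes through.
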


Now we are ready to prove Proposition \ref{prop-dre}.

\begin{proof}[Proof of Proposition \ref{prop-dre}]
Since $A$ is a strongly quasi-hereditary Auslander algebra, its characteristic tilting module $\mathbb{T}$ is isomorphic to $\mathbb{T}^{1}$.
First we prove $\tr_{\mathbb{T}} M = \tr_{P(\mathbb{T})} M$ for each $M \in \mod A$. 
Let $\pi: P(\mathbb{T})\to \mathbb{T}$ be the projective cover of $\mathbb{T}$.
Then for each morphism $f:\mathbb{T}\to M$, we have $\im f =\im f\pi$.
Hence $\tr_{\mathbb{T}}M\subseteq\tr_{P(\mathbb{T})}M$.
Conversely, we show $\tr_{\mathbb{T}}M\supseteq\tr_{P(\mathbb{T})}M$.
Since $A$ is an Auslander algebra, $\kD A$ is in $\Fac_{2}(eA)$.
Moreover, by the definition of left-strongly quasi-hereditary algebras, we have $\id \mathbb{T} \le 1$.
By Lemma \ref{lem-pd-sub}(2), the projective cover $P(\mathbb{T})$ of $\mathbb{T}$ is in $\add eA$. 
It follows from Corollary \ref{cor-sqh-Aus} that $P(\mathbb{T})\in \add \mathbb{T}$.
Hence we have the assertion.

Next, we show $\mathcal{F}(\Delta)/\add \mathbb{T} \cong \mod (A/ AeA)$.
By Lemma \ref{lem-dr-thm3}, we have $\mathcal{F}(\Delta)/ \add \mathbb{T} \simeq \mathcal{H}(\mathbb{T})$.
Since $A$ is a left-strongly quasi-hereditary algebra, the injective dimension of each costandard module is at most one. 
By Lemma \ref{lem-pd-sub}(2), the projective cover of every costandard module is injective.
Thus we obtain that $\mathcal{H}(\mathbb{T}) = \mod (A/\tr_{\mathbb{T}} A)$ by Lemma \ref{lem-dr-thm4}.
Since $\add P(\mathbb{T})=\add eA$, we can easily check that $\tr_{\mathbb{T}} A=\tr_{\mathbb{P(\mathbb{T})}}A= AeA$.
Hence we obtain $\mathcal{F}(\Delta)/\add \mathbb{T} \cong \mod (A/ AeA)$.
The assertion follows from Corollary \ref{cor-sqh-Aus}.
\end{proof}

\subsection*{Acknowledgements}
The authors would like to express their deep gratitude to Hiroyuki Minamoto for stimulating discussions and suggesting Proposition \ref{prop-hmky}.
T.A. is grateful to Kota Yamaura for answering various questions about his paper.
The authors are greatly indebted to the referee for his/her valuable comments and pointing out an error in the proof of Lemma \ref{lem-pd-sub}.
The authors would also like to thank Ryoichi Kase for helpful comments.

\end{document}